\numberwithin{equation}{subsection}
\newtheorem{theorem}{Theorem}[section]
\newtheorem{lemma}[theorem]{Lemma}
\theoremstyle{definition}
\newtheorem{remark}[theorem]{Remark}
\title[Two Boundary Centralizer Algebras for $\mathfrak{gl}(n|m)$]{Two Boundary Centralizer Algebras for $\mathfrak{gl}(n|m)$}
\author{Jieru Zhu}
\address{Department of Mathematics \\
		University of Oklahoma \\
		Norman, OK 73019, USA}
\email{jieruzhu@buffalo.edu}
\begin{document}

\begin{abstract}
We define an action of the degenerate two boundary braid algebra $\mathcal{G}_d$ on the $\mathbb{C}$-vector space $M\otimes N\otimes V^{\otimes d}$, where $M$ and $N$ are arbitrary modules for the general linear Lie superalgebra $\mathfrak{gl}(n|m)$, and $V$ is the natural representation. When $M$ and $N$ are parametrized by rectangular hook Young diagrams, this action factors through a quotient $\mathcal{H}^{\operatorname{ext}}_d$. The irreducible summands of $M\otimes N\otimes V^{\otimes d}$ for the centralizer of $\mathfrak{gl}(n|m)$, remain irreducible once regarded as modules for this quotient $\mathcal{H}^{\operatorname{ext}}_d$.
\end{abstract}

\maketitle

\section{Introduction}
Schur-Weyl duality states that the action of the general linear group $GL(V)$ and that of the symmetric group $\Sigma_d$ fully centralize each other on the tensor space $V^{\otimes d}$. As a consequence, the irreducible representations of $GL(V)$ and $\Sigma_d$ that occur as direct summands in $V^{\otimes d}$ are paramatrized by the same index set, the set of Young diagrams with $d$ total boxes. The same diagram gives the highest weight for an irreducible $GL(V)$-module, as well as the cycle type of a conjugacy class in $\Sigma_d$.


Various generalizations have been made over the past century. More recently, Arawaka-Suzuki \cite{AS} noticed that by changing the underlying space $V^{\otimes d}$ to $X\otimes V^{\otimes d}$, where $X$ is an object in the category $\mathcal{O}$ of $\mathfrak{sl}_n$, one achieves a similar duality. In this case, the symmetric group is replaced by the graded affine Hecke algebra $H_d$ associated with $GL_n$. In particular, there is a functor from $\mathcal{O}(\mathfrak{sl}_n)$ to the category of finite dimensional representations of $H_d$. This can be considered the one boundary case.

Inspired by this result and related literature, Daugherty \cite{D} studied the two boundary case, where the underlying space is $M\otimes N\otimes V^{\otimes d}$. The modules $M$ and $N$ are taken to be highest weight representations of the Lie algebra $\mathfrak{gl}_n$ whose corresponding Young diagrams are rectangles. Accordingly, the extended degenerate two boundary Hecke algebra $\mathcal{H}^{\operatorname{ext}}$ acts on the vector space $M\otimes N\otimes V^{\otimes d}$, which centralizes the Lie algebra action. Moreover, $\mathcal{H}^{\operatorname{ext}}$ recovers much of the representation theory of the centralizer algebra $H=\operatorname{End}_{\mathfrak{gl}_n}(M\otimes N\otimes V^{\otimes d})$, in the following sense: all irreducible $H$-submodules that occur as summands of $M\otimes N\otimes V^{\otimes d}$, remains irreducible after the action is restricted to $\mathcal{H}^{\operatorname{ext}}$. Furthermore, one can construct irreducible modules for $\mathcal{H}^{\operatorname{ext}}$ explicitly using combinatorial tools.

This work generalizes the above story to the $\mathbb{Z}_2$-graded setting. The $\mathbb{Z}_2$-graded version of the existing results are as follows: the highest weight representation theory of the Lie superalgebra $\mathfrak{gl}(n|m)$ developed by Kac \cite{Kac} states that polynomial representations are parametrized by Young diagrams that are contained in a hook shape \cite{BR}. Given a partition, one can derive the highest weight of the associated representation using an explicit combinatorial manipulation \cite{BK}, this establishes a connection between the highest weight theory and the combinatorial construction of polynomial representations. 

The tensor product of two irreducible representations can be understood via multiplication of their characters, which are given by hook Schur functions. According to Remmel \cite{Rem}, this decomposition adimts the same coeffcients as multiplication of usual Schur functions, known as the Littlewood-Richardson coefficients, which in turn captures the decomposition of tensor products of $\mathfrak{gl}_n$-modules. Therefore, all combinatorial results about Littlewood-Richardson coefficients can be used for the $\mathbb{Z}_2$-graded case, including the multiplicity free result \cite{Stem} and the details of decomposition of two rectangles \cite{Stan}.

In Section 2, we will review the highest weight theory of the Lie superalgebra $\mathfrak{gl}(n|m)$, and state the explicit method in \cite{BK} to derive the associated partition. 

In Section 3, we will review the definition of the degenerate two boundary braid algebra $\mathcal{G}_d$ used in \cite{D}, as the algebra generated by the polynomial rings $\mathbb{C}[x_1,\dots,x_d]$, $\mathbb{C}[y_1,\dots,y_d]$, $\mathbb{C}[z_0,\dots,z_d]$, the group algebra $\mathbb{C}\Sigma_d$ of the symmetric group, subject to a list of relations mentioned in \cite{D}. Let $M$ and $N$ be representations for $\mathfrak{gl}(n|m)$ in the category $\mathcal{O}$, the following is true.
\begin{theorem}
There is a well defined action 
\begin{align*}
\Phi: \mathcal{G}_d \to \operatorname{End}_{\mathfrak{gl}(n|m)}(M\otimes N \otimes V^{\otimes d})
\end{align*}
\end{theorem}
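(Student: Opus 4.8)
The plan is to define $\Phi$ on the generators of $\mathcal{G}_d$ by super analogues of the operators of \cite{D}, and then to verify the defining relations of $\mathcal{G}_d$ one family at a time. Fix the even nondegenerate invariant supersymmetric form on $\mathfrak{gl}(n|m)$ coming from the supertrace on $V$, with Casimir tensor $\gamma=\sum_{a,b}(-1)^{|b|}e_{ab}\otimes e_{ba}\in U(\mathfrak{gl}(n|m))^{\otimes 2}$; this sign convention makes $\gamma$ invariant and its image $C=\sum_{a,b}(-1)^{|b|}e_{ab}e_{ba}$ central and equal to $n-m$ on $V$. Labelling the tensor slots of $M\otimes N\otimes V^{\otimes d}$ by $M,N,V_1,\dots,V_d$, write $\gamma_{PQ}$ for the operator acting by $\gamma$ on slots $P,Q$ --- with the Koszul signs dictated by the symmetric braiding on super vector spaces --- and by the identity on the remaining slots; these are finite sums of operators, hence defined on $M\otimes N\otimes V^{\otimes d}$ for arbitrary $M,N$ in $\mathcal{O}$. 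Let $s_i$ be the graded flip of the $i$-th and $(i+1)$-st copies of $V$. I would set $\Phi(s_i)=s_i$, $\Phi(z_0)=\gamma_{MN}$ shifted by the constant of \cite{D} (built from the Casimir operators of $M$ and $N$), and $\Phi(x_i),\Phi(y_i),\Phi(z_i)$ the Casimir-type elements of \cite{D} built from $\gamma_{M,V_i}$, $\gamma_{N,V_i}$, and both together, conjugated by the lower flips $s_1,\dots,s_{i-1}$ as prescribed there, again shifted by the scalars of \cite{D}. The only quantities entering are $n-m$ and the Casimir operators on $M$ and $N$, and their values are forced by requiring one relation to hold, not guessed.

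Granting the definition, the verification splits into four blocks. (1) \emph{Image in the centralizer:} each $s_i$ is a $\mathfrak{gl}(n|m)$-module map by naturality of the braiding, and each $\gamma_{PQ}$ commutes with the diagonal action because $\gamma$ is invariant, so every polynomial in them lies in $\operatorname{End}_{\mathfrak{gl}(n|m)}(M\otimes N\otimes V^{\otimes d})$. (2) \emph{Symmetric-group relations:} the graded flips already give an action of $\Sigma_d$ on $V^{\otimes d}$, so $s_i^2=1$, the braid relation, and the far-apart commutations are automatic. (3) \emph{Commutativity inside each of $\mathbb{C}[x_\bullet],\mathbb{C}[y_\bullet],\mathbb{C}[z_\bullet]$:} this reduces to the classical facts that $\gamma_{PQ}$ and $\gamma_{RS}$ commute when $\{P,Q\}$ and $\{R,S\}$ are disjoint or nested, together with the three-slot identity $[\gamma_{PQ}+\gamma_{PR},\gamma_{QR}]=0$; these are formal consequences of the invariance of $\gamma$ and survive the passage to the $\mathbb{Z}_2$-graded setting unchanged. (4) \emph{Mixed relations:} when the index $j$ is far from $\{i,i+1\}$ the operators have disjoint support and commute; the interesting cases use the conjugation identity $s_i\gamma_{P,i}s_i=\gamma_{P,i+1}$ for $P\notin\{i,i+1\}$ together with the fundamental identity $\gamma_{V_iV_{i+1}}=s_i$ --- the super Casimir of two adjacent natural modules is \emph{exactly} the graded flip, with no correction term, because $n-m$ cancels from the $S^2V$ and $\Lambda^2V$ eigenvalues of $C$ --- and for $z_0$ one computes the needed brackets from the three-slot identity applied to $M,N,V_i$.

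The step I expect to be the main obstacle is the sign bookkeeping inside block (4): each time $\gamma$ acts across tensor factors it produces Koszul signs, and one must check that over all of these computations the signs cancel so that the \emph{ungraded} relations of $\mathcal{G}_d$ hold on the nose, since $\mathcal{G}_d$ carries no grading. The efficient way to control this is to observe that the ingredients invoked above --- invariance of $\gamma$, the three-slot identity, naturality of the braiding, and $\gamma_{V_iV_{i+1}}=s_i$ --- are all \emph{diagrammatic} statements valid in any symmetric monoidal category, so transporting them to the category of $\mathfrak{gl}(n|m)$-supermodules inserts the correct Koszul signs uniformly, and no relation need be re-derived sign-by-sign. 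This reduces the proof to fixing conventions for $\gamma$ and the braiding, pinning down the finitely many scalars by forcing a single relation, and then transcribing the relation-by-relation check of \cite{D} into this language --- the last step being routine once the conventions are in place.
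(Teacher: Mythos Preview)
Your proposal is correct and lands on the same definition of the map as the paper: the simple transpositions go to graded flips, and $x_i,y_i,z_i,z_0$ go to the Casimir--type operators built from $\gamma$ acting on the appropriate pairs of tensor slots (equivalently, differences of iterated Casimirs), exactly as in the paper's $\rho$. Where you diverge is in the \emph{verification}. The paper checks (R1)--(R6) by hand: for each relation it expands both sides on a generic decomposable tensor, tracks every Koszul sign produced when an $E_{pq}$ crosses a homogeneous vector, and shows the exponents match. You instead propose to bypass all of that by observing that the ingredients you use --- invariance of $\gamma$, the infinitesimal braid (three--slot) identity $[\gamma_{PQ}+\gamma_{PR},\gamma_{QR}]=0$, naturality of the symmetric braiding, and the conjugation $s_i\gamma_{P,i}s_i=\gamma_{P,i+1}$ --- are statements that make sense and hold in any rigid symmetric tensor category, so their super instances come with the Koszul signs already built in and the ungraded relations of $\mathcal{G}_d$ hold on the nose because every operator in sight is even.

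This is a legitimate and more conceptual route; it is also the one that generalizes without change to other symmetric tensor categories. The paper's explicit computation, on the other hand, is self-contained and makes no appeal to an abstract framework. Two small comments on your write-up. First, the identity $\gamma_{V_iV_{i+1}}=s_i$ that you single out is true and will be essential for the \emph{next} theorem (passing to $\mathcal{H}^{\operatorname{ext}}_d$), but it is not needed for $\mathcal{G}_d$: for (R4), (R5), (R6) it suffices that $x_{i+1}-t_ix_it_i=\gamma_{i,i+1}=y_{i+1}-t_iy_it_i$ and that $\gamma_{i,i+1}$ is fixed by the braiding, both of which follow from your conjugation identity and the symmetry of $\gamma$. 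Second, block~(3) is handled more directly, as the paper does, by noting that $\rho(x_i)=\tfrac12(\kappa_{M,i}-\kappa_{M,i-1})$ is a difference of operators each of which is the image of the central $\kappa$ under an iterated comultiplication, hence commutes with every $\kappa_{M,j}$, $\kappa_{N,j}$, $\kappa_{M\otimes N,j}$; this gives the commutativity of the three polynomial families and relation (R2) in one stroke, without iterating the three--slot identity.
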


Assume further that $M=L(\alpha)$ and $N=L(\beta)$ are irreducible representations with highest weight $\alpha$, $\beta$ labeled by rectangles $(a^p)$, $(b^q)$, where $(a^p)$ is a rectangle with $p$ rows of $a$ boxes and $(b^q)$ a rectangle with $q$ rows of $b$ boxes. The extended degenerate two boundary Hecke algebra $\mathcal{H}^{\operatorname{ext}}$ is defined as the quotient of $\mathcal{G}_d$ via an explicit set of relations based on the choice of $a,b,p,q$. Using formulas for computing the eigenvalues of the action of the polynomial generators $x_1$ and $y_1$, we show the following.  
\begin{theorem}
There is a well defined action 
\begin{align*}
\Psi: \mathcal{H}^{\operatorname{ext}} \to \operatorname{End}_{\mathfrak{gl}(n|m)}(L(\alpha)\otimes L(\beta) \otimes V^{\otimes d})
\end{align*}
\end{theorem}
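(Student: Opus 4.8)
The plan is to derive this from the previous theorem. By that theorem there is already a well-defined algebra homomorphism $\Phi\colon \mathcal{G}_d \to \operatorname{End}_{\mathfrak{gl}(n|m)}(M\otimes N\otimes V^{\otimes d})$ for \emph{arbitrary} modules $M,N$ in $\mathcal{O}$, in particular for $M=L(\alpha)$ and $N=L(\beta)$. Since $\mathcal{H}^{\operatorname{ext}}$ is by definition the quotient of $\mathcal{G}_d$ by the explicit two-sided ideal $I$ generated by the additional relations attached to the data $a,b,p,q$, it suffices to prove that $\Phi$ kills $I$; the map $\Psi$ is then just the induced homomorphism on the quotient, and its being an algebra map is automatic. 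So the theorem reduces to checking a short list of operator identities on $L(\alpha)\otimes L(\beta)\otimes V^{\otimes d}$: that $\Phi(z_0)$ acts by the prescribed scalar, and that $\Phi(x_1)$ and $\Phi(y_1)$ satisfy the prescribed one-variable polynomial identities $f_{a,p}(\Phi(x_1))=0$ and $g_{b,q}(\Phi(y_1))=0$; any further relations in the presentation of $\mathcal{H}^{\operatorname{ext}}$ should then follow formally from these together with the relations of $\mathcal{G}_d$, or be checked by the same eigenvalue method.

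The scalar relation is the easy half. From the formulas defining $\Phi$, the operator $\Phi(z_0)$ involves only the $M$ (equivalently $M\otimes N$) tensor factor, and up to normalization it is a difference of quadratic Casimir operators $\Omega$. On the irreducible module $L(\alpha)$ the Casimir acts by the scalar attached to its highest weight, and that highest weight is obtained from the rectangle $(a^p)$ by the combinatorial recipe recalled in Section~2 (and similarly for $(b^q)$); substituting it produces exactly the scalar that appears in the definition of the quotient.

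The polynomial identities are the heart of the matter, and this is where the eigenvalue formulas advertised in the introduction enter. Up to the scalar shift by $\Phi(z_0)$ just discussed, $\Phi(x_1)$ is a $\mathfrak{gl}(n|m)$-invariant contraction between the $L(\alpha)$ block (or the $L(\alpha)\otimes L(\beta)$ block) and the first tensor factor $V_1$; since it commutes with the diagonal action on those factors, it acts by a single scalar on each irreducible summand of $L(\alpha)\otimes V$ (respectively of $(L(\alpha)\otimes L(\beta))\otimes V$), and by the $\mathfrak{gl}(n|m)$ version of the Pieri rule that scalar is the hook-content of the box being added. The point is that $\alpha=(a^p)$ is a rectangle, hence has only a bounded number of addable corners; moreover $\beta=(b^q)$ is also a rectangle, so by Stanley's description of the product of two rectangular Schur functions the shapes $\nu$ occurring in $L(\alpha)\otimes L(\beta)$ are confined to a narrow band, and again only boundedly many box-contents arise. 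Consequently the spectrum of $\Phi(x_1)$ is finite and contained in the root set of $f_{a,p}$, which forces $f_{a,p}(\Phi(x_1))=0$; the spectator factors $N$ and $V_2,\dots,V_d$ are irrelevant here since $\Phi(x_1)$ acts on them as the identity. The identity $g_{b,q}(\Phi(y_1))=0$ is proved the same way with the two boundaries interchanged.

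I expect the main obstacle to be the eigenvalue bookkeeping in the $\mathbb{Z}_2$-graded setting. Two features need care. First, the relevant content function is not the naive $j-i$: boxes in the arm, in the leg, and at the inner corner of the $(n|m)$-hook contribute according to a piecewise formula coming from the Benkart--Kang/Berele--Regev dictionary of Section~2, and the polynomials $f_{a,p},g_{b,q}$ must be matched against precisely this formula. Second, atypicality: when $\alpha$ (or one of the shapes $\nu$ arising in $L(\alpha)\otimes L(\beta)$) sits on the boundary of the hook region, $L(\alpha)\otimes V$ may fail to be semisimple, so a priori one only learns that $\Phi(x_1)$ is triangular with eigenvalues among the roots of $f_{a,p}$, not that the squarefree polynomial $f_{a,p}$ annihilates it. Closing this gap --- by checking that the tensor products with $V$ in question stay multiplicity-free and semisimple, or by arguing on composition factors so that no Jordan block can persist --- is where the real work lies; the rest is a faithful transcription of Daugherty's $\mathfrak{gl}_n$ computation into the super case.
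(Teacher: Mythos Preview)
Your overall strategy---pass to the quotient by checking the extra relations---is exactly what the paper does, and your treatment of $(x_1-a)(x_1+p)=0$ via the Pieri rule and the two addable corners of a rectangle is on target. But you have misidentified the list of relations to verify, and this creates a real gap.

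There is no ``$z_0$ acts by a prescribed scalar'' relation in the presentation of $\mathcal{H}^{\operatorname{ext}}_d$; the four extra relations are $(x_1-a)(x_1+p)=0$, $(y_1-b)(y_1+q)=0$, and $x_{i+1}=t_ix_it_i+t_i$, $y_{i+1}=t_iy_it_i+t_i$ for $1\le i\le d-1$. You have omitted the last pair entirely, and they do \emph{not} follow formally from the $\mathcal{G}_d$ relations plus the quadratic relations on $x_1,y_1$. In the $\mathcal{G}_d$ action one has already established $x_{i+1}-t_ix_it_i=\gamma_{i,i+1}$, so what must be shown is that the operator $\gamma_{i,i+1}$ coincides with the signed swap $t_i$ on $V\otimes V$. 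The paper does this by decomposing $V\otimes V\simeq L(\,\ydiagram{2}\,)\oplus L(\,\ydiagram{1,1}\,)$ and checking that both $\gamma_{i,i+1}$ and $t_i$ act by $+1$ on the symmetric piece and $-1$ on the antisymmetric piece. This is an eigenvalue argument, but on $V\otimes V$, not on $L(\alpha)\otimes V$; your sketch never touches it.

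Two of the obstacles you anticipate are not obstacles. The content formula is \emph{not} piecewise: Lemma~\ref{pierri} shows that whether the added box lies in the even or odd part of the hook, the eigenvalue of $\gamma_{M,1}$ is the ordinary content $l-r$. And semisimplicity is not an issue: $L(\alpha)$ is polynomial, hence a summand of some $V^{\otimes s}$, so $L(\alpha)\otimes V$ is a summand of $V^{\otimes(s+1)}$ and therefore completely reducible. Finally, Stanley's description of the shapes in $L(\alpha)\otimes L(\beta)$ plays no role here; $x_1$ acts only through $L(\alpha)$ and the first $V$, with $L(\beta)$ a spectator, so the quadratic relation for $x_1$ needs nothing beyond the two addable corners of the single rectangle $(a^p)$.
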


In Section 4, we will review the double centralizer theorem and explain how it relates to the application of combinatorial tools such as Bratteli graphs and Young tableaux. In particular, 
\begin{align*}
L(\alpha)\otimes L(\beta)\otimes V^{\otimes d}\simeq \bigoplus_{\lambda} L(\lambda)\otimes \mathcal{L}^{\lambda}
\end{align*}
as a ($\mathfrak{gl}(n|m)$, $\mathcal{H}$)-bimodule, where $\mathcal{H}=\operatorname{End}_{\mathfrak{gl}(n|m)}(L(\alpha)\otimes L(\beta) \otimes V^{\otimes d})$ is the centralizer algebra. 

Let $\lambda$ be a partition in the above decomposition, and $\mathcal{P}_0$ the set of partitions occuring in the decomposition of $L(\alpha)\otimes L(\beta)$. For $\mu\in\mathcal{P}_0$, a semistandard tableau of shape $\lambda/\mu$ is a filling of the boxes in $\lambda$ but not in $\mu$, with integers $1$, $2$, $3$, \dots, so that numbers increase from left to right and from top to bottom.
\begin{theorem}
$\mathcal{L}^{\lambda}$ admits a basis $\{v_T\}$, where the indices $T$ are parametrized by all semistandard tableaux of shapes $\lambda/\mu$, for all $\mu\in \mathcal{P}_0$ that is contained in $\lambda$. Moreover,
\begin{align*}
z_i.v_T&=c(i)v_T \hspace{.5 in} 1\leq i \leq d
\end{align*} 
where $c(i)$ is the content of the box $i$.

Let $\mathfrak{B}$ be the set of boxes in the rows $p+1$ and below in $\mu$, 
\begin{align*}
z_0. v_T=
(qab+\displaystyle\sum_{b\in\mathfrak{B}}(2c(b)-(a-p+b-q)))v_T.
\end{align*}
\end{theorem}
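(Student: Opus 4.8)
The plan is to realize $\mathcal{L}^\lambda$ through the Bratteli graph of the tower $M\otimes N\subseteq M\otimes N\otimes V\subseteq\cdots\subseteq M\otimes N\otimes V^{\otimes d}$, and then to evaluate the mutually commuting operators $\Psi(z_0),\Psi(z_1),\dots,\Psi(z_d)$ on its vertices. First, since the product of two rectangular hook Schur functions is multiplicity free with shapes described by \cite{Stem,Stan}, and since $\operatorname{ch}(L(\nu)\otimes V)=\operatorname{ch}L(\nu)\cdot hs_{(1)}=\sum_{\nu^{+}}\operatorname{ch}L(\nu^{+})$ by Remmel's Pieri identity \cite{Rem} (the sum over hook diagrams $\nu^{+}$ obtained from $\nu$ by adding one box), the decomposition of Section~4 refines along the tower into a multiplicity-free Bratteli graph: the summands $L(\lambda)$ of $M\otimes N\otimes V^{\otimes d}$ are indexed by chains $\mu=\mu^{(0)}\subset\mu^{(1)}\subset\cdots\subset\mu^{(d)}=\lambda$ with $\mu\in\mathcal{P}_0$ and $|\mu^{(i)}/\mu^{(i-1)}|=1$. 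Recording by the label $i$ the box added at the $i$-th step identifies a chain with a tableau $T$ of shape $\lambda/\mu$ as in the statement, and, each branching being multiplicity free, the associated multiplicity space is canonically one dimensional; letting $v_{T}$ span it produces the asserted basis $\{v_T\}$ of $\mathcal{L}^\lambda$.

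For $1\le i\le d$: by the construction of $\Phi$ in Section~3, $\Phi(z_i)$ is assembled from the quadratic Casimir tensor $\Omega=\sum_{a,b}(-1)^{\overline{b}}E_{ab}\otimes E_{ba}$ acting on the factors $M$, $N$ and the first $i$ copies of $V$, hence on nested sub-blocks it reduces to total Casimir operators, which act on the copy of $L(\lambda)$ attached to a chain $T$ by the central character; concretely its ``$i$-th increment'' is $\tfrac12\bigl(\langle\mu^{(i)},\mu^{(i)}+2\rho\rangle-\langle\mu^{(i-1)},\mu^{(i-1)}+2\rho\rangle\bigr)$ minus a constant independent of which addable box was used, where $\langle\,,\,\rangle$ is the supersymmetric form on $\mathfrak{h}^{*}$ and $\rho$ the super Weyl vector. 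Substituting the explicit highest weight attached to a hook diagram by the algorithm of \cite{BK}, one computes this increment to equal $j-i'$ when the box added at step $i$ lies in row $i'$ and column $j$, that is, exactly $c(i)$; the cancellation of the sign contributions coming from the odd coordinates $\delta_{\ell}$ is precisely what the bending rule of \cite{BK} guarantees. Since distinct chains in $\mathcal{L}^\lambda$ have distinct content sequences $(c(1),\dots,c(d))$ — a removable box of a partition is determined by its content, so one reconstructs the chain, and hence $\mu$, from $\lambda$ and the sequence by deleting boxes one at a time — the $v_T$ are joint eigenvectors of $\Psi(z_1),\dots,\Psi(z_d)$ and $z_i.v_T=c(i)v_T$.

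For $z_0$: from Section~3, $\Phi(z_0)$ involves only the two distinguished factors $M$ and $N$; it is, up to the normalization fixed there, the mixed Casimir $\tfrac12\,\Omega_{MN}$, and therefore acts on every chain starting at $\mu=\mu^{(0)}$ by the single scalar $\tfrac12\bigl(\langle\mu,\mu+2\rho\rangle-\langle\alpha,\alpha+2\rho\rangle-\langle\beta,\beta+2\rho\rangle\bigr)$. To bring this to the stated closed form, use Stanley's parametrization \cite{Stan} of the diagrams $\mu$ in the product of $(a^{p})$ and $(b^{q})$: $\mu$ is obtained from the fully stacked diagram by sliding part of the $\beta$-rectangle, and $\mathfrak{B}$ — the boxes of $\mu$ in rows $p+1$ and below — records exactly this deviation. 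Inserting the \cite{BK}-weights of $\alpha$, $\beta$, $\mu$ into $\langle\,,\,\rangle$, the rows $\le p$ contribute the constant $qab$ (the value $\langle\alpha,\beta\rangle$ for the stacked diagram), while the rows $>p$ contribute $\sum_{b\in\mathfrak{B}}\bigl(2c(b)-(a-p+b-q)\bigr)$, which is the claimed eigenvalue of $z_0$ on $v_T$.

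The routine part of the argument is the formal structure above: multiplicity-free branching together with the fact that a Casimir operator acts on any highest weight module by its central character. The main obstacle is the weight bookkeeping in the last two paragraphs: one must (i) pin down exactly which $\mu$ occur and parametrize them so that $\mathfrak{B}$ is meaningful, and (ii) substitute the combinatorially defined highest weights of \cite{BK} into the supersymmetric bilinear form and simplify, tracking carefully the sign reversal on the $\delta$-block and the shift by the super Weyl vector; the analogous, simpler sign verification is also the crux of the $z_i$ computation, and it is here that rectangularity of $\alpha$ and $\beta$ is essential.
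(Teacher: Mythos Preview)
Your plan for the basis $\{v_T\}$ and for the eigenvalues of $z_1,\dots,z_d$ is exactly the paper's: multiplicity-free branching along the Bratteli tower gives the path basis, and the Casimir increment $\tfrac12\bigl(\langle\mu^{(i)},\mu^{(i)}+2\rho\rangle-\langle\mu^{(i-1)},\mu^{(i-1)}+2\rho\rangle-\langle\epsilon_1,\epsilon_1+2\rho\rangle\bigr)$ is computed case by case (added box in an even row versus an odd row) to be the content.

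For $z_0$ your route diverges from the paper's, and the divergence is where the gap lies. You propose to substitute the \cite{BK} weight of a general $\mu\in\mathcal{P}_0$ directly and split the answer by rows $\le p$ versus rows $>p$, asserting that the first block gives $qab=\langle\alpha,\beta\rangle$. That identification is only valid in the regime $n\ge p+q$ (more precisely when both rectangles sit entirely in the even block); once $p>n$ or $p+q>n$ the weights $\alpha,\beta,\mu$ acquire odd components and $\langle\alpha,\beta\rangle\ne qab$. The paper handles this by a three-case analysis on the position of $n$ relative to $p$ and $p+q$, choosing in each case an ``anchor'' shape in $\mathcal{P}_0$ (the vertically stacked $(a^p)\cup(b^q)$ when $n<p+q$, the horizontally juxtaposed $(a^p)+(b^q)$ when $n\ge p+q$), computing $\tfrac12(\kappa_{\text{anchor}}-\kappa_\alpha-\kappa_\beta)$ explicitly for that one shape, and then telescoping to an arbitrary $\mu$ by moving boxes one at a time. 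The telescoping step needs two lemmas you do not invoke: first, that if $\lambda,\mu\in\mathcal{P}_0$ differ by a single box-move then $\kappa_{L(\lambda)}-\kappa_{L(\mu)}=2c(\text{box in }\overline\lambda)-2c(\text{box in }\overline\mu)$ (proved, again, by an even/odd case split); and second, Daugherty's combinatorial fact that for such a pair the two distinguished contents sum to $a-p+b-q$. Together these give the increment $4c(b)-2(a-p+b-q)$ per moved box and produce the sum over $\mathfrak{B}$ (with a further identity converting between the ``rows $>p$'' set $\mathfrak{B}$ and the ``columns $>a$'' set $\mathfrak{C}$ when the anchor is the stacked rectangle). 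Your proposal gestures at the sliding picture but does not isolate these two lemmas or the case split on $n$, and the stated row-wise decomposition cannot hold uniformly without them.
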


These eigenvalues of the polynomial generators determine the action of $\mathcal{H}^{\operatorname{ext}}$, similar to the result in the $\mathfrak{gl}_n$ setting \cite{D}. Based on the information, we can construct representations $\mathcal{L}_{\lambda}$ of $\mathcal{H}^{\operatorname{ext}}$ that occur as the restrictions of $\mathcal{L}^{\lambda}$ from the centralizer algebra $\mathcal{H}$ to $\mathcal{H}^{\operatorname{ext}}$. They are indeed the same as those in \cite{D}, hence are irredicuble after restriction. In this sense, $\mathcal{H}^{\operatorname{ext}}$ also yields a large subalgebra of the centralizer $\operatorname{End}_{\mathfrak{gl}(n|m)}(L(\alpha)\otimes L(\beta)\otimes V^{\otimes d})$.

\section{The Lie Superalgebra}
\subsection{$\mathfrak{gl}(n|m)$ and a Casimir element}
Fix $m,n\in \mathbb{Z}_{\geq 0}$, the Lie superalgebra $\mathfrak{g}=\mathfrak{gl}(n|m)$ is the $\mathbb{C}$-vector space with a $\mathbb{Z}_2$-grading, whose elements consist of $(n+m)\times (n+m)$ matrices. Here, $\mathfrak{g}=\mathfrak{g}_{\overline{0}}\oplus \mathfrak{g}_{\overline{1}}$, where
\begin{align*}
\mathfrak{g}_{\overline{0}}=\left\{\begin{bmatrix}
A &0 \\ 0 & D 
\end{bmatrix}| A\in \operatorname{Mat}_{n,n}(\mathbb{C}), D\in \operatorname{Mat}_{m,m}(\mathbb{C})\right\},\\
\mathfrak{g}_{\overline{1}}=\left\{\begin{bmatrix}
0 &B \\ C & 0 
\end{bmatrix}| B\in \operatorname{Mat}_{n,m}(\mathbb{C}), C\in \operatorname{Mat}_{m,n}(\mathbb{C})\right\}.
\end{align*}
We denote by $\overline{x}$ the parity of $x$, which is $\overline{0}$ if $x\in \mathfrak{g}_{\overline{0}}$ and $\overline{1}$ if $x\in \mathfrak{g}_{\overline{1}}$. Let $I=\{1,2,\dots,m+n\}$ and $\overline{\cdot}: I\to \mathbb{Z}_2$ be the map such that $\overline{i}=\overline{0}$ if $1\leq i\leq n$, $\overline{i}=\overline{1}$ if $n+1\leq i\leq m+n$.

Define the Lie super bracket on $\mathfrak{g}$ as 
\begin{align*}
[x,y]=xy-(-1)^{\overline{x}\cdot \overline{y}} yx,
\end{align*}
for homogeneous elements $x,y\in \mathfrak{g}$. 

Let the Cartan subalgebra of $\mathfrak{g}$ be

\begin{align*}
\mathfrak{h}=\left\{\begin{bmatrix}
A &0 \\ 0 & D 
\end{bmatrix} \in \mathfrak{g}_{\overline{0}} | A,D: \text{diagonal matrices} \right\}
\end{align*}
and $d_i$ be the elementary diagonal matrix with a single $1$ in the $i$-th diagonal entry and $0$'s elsewhere. Let $\epsilon_i: \mathfrak{h}\to \mathbb{C}$ be the dual of $d_i$. $\mathfrak{g}$ has a root space decomposition 
\begin{align*}
\mathfrak{g}=\mathfrak{h}\oplus \displaystyle\bigoplus _{\alpha\in \Phi}\mathfrak{g}_{\alpha}, \hspace{.2 in}\mathfrak{g}_{\alpha}=\{x\in \mathfrak{g}| [h,x]=\alpha(h)x, \forall h\in \mathfrak{h}\}
\end{align*}
where $\Phi=\{\epsilon_i-\epsilon_j|1\leq i,j\leq n+m, i\neq j\}$ is the set of roots , and $\Phi^+=\{\epsilon_i-\epsilon_j|1\leq i<j \leq n+m\}$ is the set of positive roots. A root $\epsilon_i-\epsilon_j$ is declaired to have parity $\overline{i}+\overline{j}$. Note this parity agrees with the parity of $\mathfrak{g}$: $\mathfrak{g}_{\alpha}\subset \mathfrak{g}_{\overline{\alpha}}$. Denote by $\Phi^+_{\overline{0}}$ the set of even positive roots and $\Phi^+_{\overline{1}}$ the set of odd positive roots. Let 
\begin{align*}
2\rho = \displaystyle\sum_{\alpha\in \Phi^+_{\overline{0}}}\alpha- \displaystyle\sum_{\alpha\in \Phi^+_{\overline{1}}}\alpha 
\end{align*}

Define a bilinear form $\langle,\rangle$ on the dual of the Cartan $\mathfrak{h}^*$:
\begin{align*}
\langle,\rangle : \mathfrak{h}^* \times \mathfrak{h}^* &\to \mathfrak{h}^{*} \\
(\epsilon_i,\epsilon_j) &\mapsto (-1)^{\overline{i}}\delta_{ij}
\end{align*}

Let $V$ be the $\mathbb{C}$-vector space consisting of column vectors of height $m+n$ that admits the following $\mathbb{Z}_2$ grading: let $e_1,\dots,e_{n+m}$ be the standard basis of $V$, declare $e_1,\dots,e_n$ to be even and $e_{n+1},\dots,e_{n+m}$ to be odd, and denote by $\overline{v}$ the parity of a homogeneous vector $v\in V$. We can regard $V$ as a $\mathfrak{gl}(n|m)$-super module by matrix multiplication under this ordered basis. For the rest of this article we will refer to supermodules as modules.

Let $U(\mathfrak{g})$ be the universal enveloping algebra associated to $\mathfrak{g}$. The coalgebra structure on $U(\mathfrak{g})$ is given by the following comultiplication map 
\begin{align*}
\Delta: U(\mathfrak{g}) &\to U(\mathfrak{g}) \otimes U(\mathfrak{g}) \\
x &\mapsto  x\otimes 1 +1\otimes x
\end{align*}
for $x\in \mathfrak{g}$.
In particular, this gives a module structure on the tensor product $V^{\otimes d}$, therefore $\mathfrak{gl}(n|m)$ acts on $V^{\otimes d}$ via
\begin{align*}
&x. (e_{i_1}\otimes \cdots \otimes e_{i_d})\\
=&(x.e_{i_1})\otimes \cdots \otimes e_{i_d}+
(-1)^{\overline{x}\cdot\overline{e_{i_1}}}e_{i_1} \otimes (x.e_{i_2})\otimes \cdots \otimes e_{i_d} +\\
&\cdots+(-1)^{\overline{x}\cdot(\overline{e_{i_1}}+\overline{e_{i_2}}+\cdots+\overline{e_{i_{d-1}}})}e_{i_1}\otimes \cdots \otimes (x.e_{i_d})
\end{align*} 
for a homogeneous element $x\in \mathfrak{gl}(n|m)$, and $1\leq i_1,\dots,i_d \leq n+m$.

Let $E_{ij}$ be the matrix unit in $\mathfrak{g}$ with a single $1$ in the $(i,j)$-entry and $0$'s everywhere else ($1\leq i,j \leq n+m$). Let 
\begin{align*}
\kappa=\displaystyle\sum_{i,j=1}^{n+m}(-1)^{\overline{j}}E_{ij}E_{ji} \in U(\mathfrak{gl}(n|m))
\end{align*}
be the Casimir element of degree 2, it is known to be central (e.g. see \cite{Kuj}.)

\subsection{Polynomial representations and Young diagrams}\label{young}

Let $X=\displaystyle\bigoplus_{i=1}^{n+m}\mathbb{Z}\epsilon_i$ be the weight lattice in $\mathfrak{g}$, $X_{\geq 0}=\displaystyle\bigoplus_{i=1}^{n+m}\mathbb{Z}_{\geq 0}\epsilon_i$ the positive weight lattice. A weight $\lambda=(\lambda_1,\dots,\lambda_{n+m}) \in X_{\geq 0}$ is dominant if 

\begin{align}
&\lambda_1\geq \lambda_2\geq \cdots \geq \lambda_n  \notag \\
&\lambda_{n+1} \geq \lambda_{n+2} \geq \cdots \geq \lambda_{n+m} \label{dominant}
\end{align}

By Sergeev\cite{Ser3}, Berele-Regev\cite{BR}, for each $\lambda\in X_{\geq 0}$ that is dominant, there is a unique irreducible highest weight representation $L(\lambda)$ with highest weight $\lambda$. We call $L(\lambda)$ a polynomial representation of $\mathfrak{gl}(n|m)$ if it is an irreducible summand of $V^{\otimes d}$ for some $d$. In particular, if $l$ is the number of nonzero entires in $\lambda_{n+1}$,\dots, $\lambda_{n+m}$, $L(\lambda)$ is polynomial if and only if $\lambda_{n}\geq l$. We will refer to this condition, combined with the dominant condition in \ref{dominant} as condition (H1).

By a partition $\lambda$ with at most $m+n$ parts we mean a decreasing sequence of nonnegative integers $\lambda_1 \geq \lambda_2 \geq \cdots \geq \lambda_{n+m}$. Given such a partition $\lambda$, we call it a hook partition if it satisfies the condition 
\begin{align*}
\lambda_{n+1} &\leq m \tag{H2}
\end{align*}

It is convenient to draw partitions as Young diagrams.  For a partition $\lambda=(\lambda_1,\lambda_2,\dots)$ associate a Young diagram consists of $\lambda_1$ boxes in the first row, $\lambda_2$ boxes in the second row, \dots, etc. We will use the terms partitions and Young diagrams interchangeably.

Define a map $\lambda\mapsto \overline{\lambda}$ from the set of hook Young diagrams to the set of integral dominant weights satisfying (H1) as follows: divide the Young diagram for $\lambda$ into two subdiagrams: the even part $\lambda_{\operatorname{even}}$ that is the Young diagram consisting of the rows $1$ through $n$, and the odd part $\lambda_{\operatorname{odd}}$ that is the Young diagram consisting of the rows $n+1$ through $n+m$. We obtain the diagram $\overline{\lambda}$ by pasting the transpose $\lambda_{\operatorname{odd}}^T$ of $\lambda_{\operatorname{odd}}$ right underneath $\lambda_{\operatorname{even}}$. 

For example, In the case $n=2, m=3$, let $\lambda$ be the following diagram
\begin{align*}
\ytableausetup{smalltableaux}
\begin{ytableau}
*(yellow)&  *(yellow)& *(yellow)&*(yellow)\\
*(yellow)&*(yellow)&*(yellow)\\
*(blue)&*(blue)&*(blue)\\
*(blue)
\end{ytableau}
\end{align*}
Then
\begin{align*}
\lambda_{\text{even}}=\begin{ytableau}
*(yellow)&  *(yellow)& *(yellow)&*(yellow)\\
*(yellow)&*(yellow)&*(yellow)
\end{ytableau} \hspace{.2 in}
\lambda_{\text{odd}}&=\begin{ytableau}
*(blue)&*(blue)&*(blue)\\
*(blue)
\end{ytableau}\hspace{.2 in}
\lambda^T_{\text{odd}}=\begin{ytableau}
*(blue)&*(blue)\\
*(blue)\\
*(blue)
\end{ytableau}
\end{align*}
Therefore $\overline{\lambda}=(4,3,2,1,1)$ and can again be represented by the following Young diagram:
\begin{align*}
\begin{ytableau}
*(yellow)&  *(yellow)& *(yellow)&*(yellow)\\
*(yellow)&*(yellow)&*(yellow)\\
*(blue)&*(blue)\\
*(blue)\\
*(blue)
\end{ytableau}
\end{align*}

The $\overline{\cdot}$ has image in the target set since if $\lambda$ satisfies Condition (H2), then $\overline{\lambda}$ must have at most $n+m$ parts, and both the even and odd parts are weakly decreasing nonnegative integers. On the other hand, it has an obvious inverse $\overline{\cdot}:\mu\to \overline{\mu}$, sending a weight $\mu$ satisfying condition (H1) to a hook Young diagram, using the same procedure. The resulting shape is still a Young diagram, as guaranteed by condition (H1). The fact that it is contained in the $(n,m)$-hook comes from the fact that $\mu$ has at least $n+m$ nonzero entries. Therefore, $\overline{\cdot}$ establishes a bijection between the two sets.

Let $b$ be a box in the Young diagram $\lambda$ that sits in the $l$-th column and $r$-th row. The content $c(b)$ of the box $b$ is defined as $c(b)=l - r$.

\subsection{The Littlewood-Richardson coefficients}\label{lr}
Let $\lambda,\mu\in X_{\geq 0}$ be dominant weights satisfying Condition (H1). Since the modules $L(\lambda)$, $L(\mu)$ are polynomial, $L(\lambda)$ occurs as a direct summand of $V^{\otimes s}$ for some $s\in\mathbb{Z}$, and so is $L(\mu)$ for some $V^{\otimes t}$. The module $L(\lambda)\otimes L(\mu)$ is therefore a direct summand of $V^{\otimes (s+t)}$, and hence semisimple. It decomposes as a $\mathfrak{g}$-module:

\begin{align*}
L(\lambda)\otimes L(\mu) \simeq \displaystyle\bigoplus_{\gamma} L(\gamma) ^{\oplus \overline{c}_{\lambda,\mu}^{\gamma}}
\end{align*}

Where the direct sum is over all dominant weights in $X^+$ satisfying Condition (H1), and $\overline{c}_{\lambda,\mu}^{\gamma}$ is the multiplicity of $L(\gamma)$ in $L(\lambda)\otimes L(\mu) $.

The usual Littlewood-Richardson coefficients are defined as follows: Let $\lambda$, $\mu$ be usual partitions. Let $s_{\lambda}$, $s_{\mu}$ be the usual Schur functions associated to $\lambda$ and $\mu$, as elements in the polynomial ring of infinitely many variables. It is well-known that 
\begin{align*}
s_{\lambda}s_{\mu}=\displaystyle\sum_{\gamma:|\gamma|=|\mu|+|\lambda|}c_{\lambda,\mu}^{\gamma}s_{\gamma} 
\end{align*}
where $|\lambda|$ and $|\mu|$ denote the sum of the entries in $\lambda$ and $\mu$. The coefficients $c_{\lambda,\mu}^{\gamma}$ are called the Littlewood Richardson coefficients and have combinatorial interpretations.

By \cite{Rem}, for hook partitions $\lambda$, $\mu$, one can associate hook Schur functions $s'_{\lambda}$ and $s'_{\mu}$ in terms of combinatorial algorithms. Similar to the previous case,
\begin{align*}
s'_{\lambda}s'_{\mu}=\displaystyle\sum_{\gamma:|\gamma|=|\mu|+|\lambda|}{c}_{\lambda,\mu}^{\gamma}s'_{\gamma} 
\end{align*}
where the coefficients are the same as the Littlewood-Richardson coefficients mentioned above. 

Since hook Schur functions correspond to the characters of representations of Lie superalgebras, and multiplication of characters correspond to the tensor products of representations, it follows that 
\begin{align*}
\overline{c}^{\gamma}_{\lambda,\mu}=c^{\gamma}_{\lambda,\mu}
\end{align*}

\section{The Extended Two Boundary Hecke Algebra}

\subsection{The Extended Two Boundary Braid Algebra}

As mentioned in \cite[Section~2]{D}, the degenerate two boundary braid algebra $\mathcal{G}_d$ is a quotient of the $\mathbb{C}$-algebra 

\begin{align*}
\mathbb{C}[x_1,\dots,x_d] \otimes  \mathbb{C}[y_1,\dots,y_d]\otimes  \mathbb{C}[z_0,\dots,z_d] \otimes \mathbb{C}\Sigma_d
\end{align*}
subject to further relations below. Here $\Sigma_d$ is the symmetric group on $d$ letters and denote by $t_i(1\leq i\leq d-1)$ the simple transpositions in $\Sigma_d$. 

Define
\begin{align*}
m_j&=\sum_{i=1}^{j-1}m_{i,j}\\
m_{i,j}&=\begin{cases}
x_{i+1}-t_ix_it_i \hspace{.2 in } j=i+1\\
t_{(i j-1)}m_{j-1,j}t_{(i j-1)} \hspace{.2 in} j\neq i+1
\end{cases}
\end{align*}
and $t_{(i j)}=t_it_{i+1}\cdots t_{j-2} t_{j-1} t_{j-2}\cdots t_1$ corresponds to the swap in $\Sigma_d$ that interchanges $i$ and $j$. 

The relations are as follows:
\begin{align*}
x_it_j=t_jx_i, \hspace{.2 in} y_it_j=t_jy_i, \hspace{.2 in } z_it_j=t_jz_i, \hspace{.2 in} i\neq j,j+1  \\
\tag{R1}\\
(z_0+z_1+\cdots+z_i)x_j = x_j(z_0+z_1+\cdots+z_i)  \hspace{.2 in} i\geq j \notag \\
(z_0+z_1+\cdots+z_i)y_j = y_j(z_0+z_1+\cdots+z_i)  \hspace{.2 in} i\geq j\\
 \tag{R2}\\
t_i(x_i+x_{i+1})=(x_i+x_{i+1})t_i,\hspace{.2 in}t_i(y_i+y_{i+1})=(y_i+y_{i+1})t_i \hspace{.2 in} 1\leq i \leq d-1 \\
\tag{R3}\\
t_it_{i+1}(x_{i+1}-t_ix_{i}t_i)t_{i+1}t_i=x_{i+2}-t_{i+1}x_{i+1}t_{i+1} \hspace{.2 in} \\
t_it_{i+1}(y_{i+1}-t_iy_{i}t_i)t_{i+1}t_i=y_{i+2}-t_{i+1}y_{i+1}t_{i+1} \hspace{.2 in}
 1\leq i\leq d-2 \notag \\
 \tag{R4}\\
 x_{i+1}-t_ix_it_i=y_{i+1}-t_iy_it_i \hspace{.2 in} 1\leq i\leq d-1 \\
 \tag{R5}\\
z_i=x_i+y_i-m_i \hspace{.2 in} 1\leq i\leq d \tag{R6}
\end{align*}

 Recall the comultiplication map $\Delta$ and Casimir element $\kappa$ defined in Section 1.
\begin{align*}
\Delta(\kappa)=1\otimes \kappa + \kappa \otimes 1 + 2\gamma
\end{align*}
where 
\begin{align*}
\gamma = \displaystyle\sum_{p,q=1}^{n+m}(-1)^{\overline{q}}E_{pq}\otimes E_{qp} \in \mathfrak{gl}(n|m) \otimes \mathfrak{gl}(n|m)
\end{align*}

Let $M$, $N$ be $\mathfrak{gl}(n|m)$-modules in the category $\mathcal{O}$, the goal of this section is to define an action of $\mathcal{G}_d$ on $M\otimes N\otimes V^{\otimes d}$ that commutes with the action of $\mathfrak{g}$ using the above elements.

Define the following elements in $U(\mathfrak{gl}(n|m))^{\otimes d+2}$ as follows:

\begin{tabular}{c c}
$\kappa_{X,i}$ & $\kappa$ acting on $X$ and the first $i$ copies of $V$, $0\leq i\leq d$\\
$\kappa_i$ & $\kappa$ acting on the $i$-th copy of $V$\\
 $\gamma_{X,i}$ & $\gamma$ acting on $X$ and the $i$-th copy of $V$, $1\leq i\leq d$\\
 $\gamma_{i,j}$ & $\gamma$ acting on the $i$-th and $j$-th copy of $V$,  $1\leq i<j\leq d$
\end{tabular}\\

Specifically, viewing $M\otimes N\otimes V^{\otimes d}$ as a module for $U(\mathfrak{gl}(n|m))^{\otimes d+2}$, these are elements in $U(\mathfrak{gl}(n|m))^{\otimes d+2}$ given as follows:

\begin{align*}
\kappa_i & = 1_M\otimes 1_N \otimes 1^{\otimes i-1}\otimes \kappa \otimes 1^{\otimes d-i}\\
\kappa_{M,i}&=\kappa \otimes 1_N \otimes 1_V^{\otimes d} +  \kappa_1+\cdots+\kappa_i \\
\kappa_{N,i}&=1_M \otimes \kappa \otimes 1_V^{\otimes d} +  \kappa_1+\cdots+\kappa_i \\
\kappa_{M\otimes N,i}&=\kappa \otimes 1_N \otimes 1_V^{\otimes d} +1_M \otimes \kappa \otimes 1_V^{\otimes d} +  \kappa_1+\cdots+\kappa_i \\
\gamma_{M,i}&= \sum_{p,q}(-1)^{\overline{q}}E_{pq}\otimes 1_N\otimes 1_V^{i-1} \otimes E_{qp} \otimes 1_V^{\otimes d-i}\\
\gamma_{N,i}&=\sum_{p,q}(-1)^{\overline{q}} 1_M\otimes E_{pq}\otimes 1_V^{i-1} \otimes E_{qp} \otimes 1_V^{\otimes d-i}\\
\gamma_{M\otimes N, i} &= \gamma_{M,i}+\gamma_{N,i}\\
\gamma_{i,j} & = \sum_{p,q}(-1)^{\overline{q}} 1_M\otimes 1_N \otimes 1_V^{\otimes i-1}\otimes E_{pq}\otimes 1_V^{\otimes j-i-1} E_{qp} \otimes 1_V^{\otimes d-j}
\end{align*}




Here, the summation is over all pairs of integers $(p,q)$ with $1\leq p,q\leq n$. Notice $\kappa$ acts on $V$ by a scalar since $\kappa$ is central and $V$ is irreducible, and denote this scalar by $\kappa_V$. We claim that there is an action of $\mathcal{G}_k$ on $M\otimes N\otimes V^{\otimes d}$.

\begin{theorem}\label{action}
There is a well-defined algebra homomorphism 
\begin{align*}
\rho: \mathcal{G}_k \to \operatorname{End}_{\mathfrak{gl}(n|m)}(M\otimes N\otimes V^{\otimes d})
\end{align*}
Given by 
\begin{align*}
t_i & \mapsto 1_M\otimes 1_N \otimes 1_V^{\otimes i-1} \otimes \sigma  \otimes 1_V^{\otimes d-i-1}\\
x_i &\mapsto \frac{1}{2}(\kappa_{M,i} -\kappa_{M,i-1}) \\
y_i &\mapsto \frac{1}{2}(\kappa_{N,i}- \kappa_{N,i-1})\\
z_i&\mapsto \frac{1}{2}(\kappa_{M\otimes N,i}-\kappa_{M\otimes N,i-1}+\kappa_V)\\
z_0 &\mapsto \frac{1}{2}(\kappa_{M\otimes N}-\kappa_M-\kappa_N)
\end{align*}
where $\sigma$ is the signed swap on $V\otimes V$ with
\begin{align*}
\sigma.( v\otimes w)=(-1)^{\overline{v}\cdot \overline{w}}w \otimes v
\end{align*}
for homogeneous $v,w\in V$.
\end{theorem}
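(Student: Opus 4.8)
The plan is to verify that each defining relation (R1)--(R6) of $\mathcal{G}_d$ is preserved under the proposed assignment $\rho$, after first checking that every operator in the image genuinely lies in $\operatorname{End}_{\mathfrak{gl}(n|m)}(M\otimes N\otimes V^{\otimes d})$. The latter is the easy part: each proposed image is built out of the Casimir elements $\kappa_{X,i}$ and the signed swaps $\sigma$, all of which commute with the diagonal $\mathfrak{g}$-action — the $\kappa$'s because $\kappa$ is central in $U(\mathfrak{g})$ and comultiplication is an algebra map, and the swaps because $\sigma$ is a $\mathfrak{g}$-module endomorphism of $V\otimes V$ (this is exactly why one uses the \emph{signed} swap in the $\mathbb{Z}_2$-graded setting). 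I would record at the outset the key identity $\Delta(\kappa)=1\otimes\kappa+\kappa\otimes 1+2\gamma$ together with its iterated form, so that differences like $\kappa_{M,i}-\kappa_{M,i-1}$ telescope into a sum of $\gamma$-terms: concretely $x_i\mapsto \gamma_{M,i}+\sum_{j<i}\gamma_{j,i}+\tfrac12\kappa_V$ and similarly for $y_i$, $z_i$, while $z_0\mapsto\gamma_{M\otimes N}$ (meaning $\gamma$ acting across the $M$ and $N$ slots). Rewriting everything in terms of the $\gamma$'s is what makes the relation checks tractable.

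With that dictionary in hand I would dispatch the relations in roughly increasing order of difficulty. Relation (R1), the commutation of each polynomial generator with distant $t_j$'s, is immediate since $\gamma_{M,i}$, $\gamma_{N,i}$, $\gamma_{j,i}$ and $\sigma$ act on disjoint tensor factors when the indices are far apart, and operators on disjoint factors of a graded tensor product commute (the Koszul signs cancel in pairs). Relation (R2), that partial sums $z_0+\cdots+z_i$ commute with $x_j$ and $y_j$ for $i\geq j$, follows because $z_0+z_1+\cdots+z_i$ telescopes to $\tfrac12(\kappa_{M\otimes N,i}-\kappa_M-\kappa_N+ i\kappa_V)$, i.e.\ essentially $\kappa$ acting on $M\otimes N\otimes V^{\otimes i}$ — a central element for that sub-tensor-factor — and $x_j$ for $j\leq i$ is an endomorphism of $M\otimes(\text{first }i\text{ copies of }V)$, so the two commute by centrality of $\kappa$. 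Relations (R3) and (R5) are ``local'' two-index statements: (R3) says $t_i$ commutes with $x_i+x_{i+1}$, which holds because $x_i+x_{i+1}$ is $\tfrac12$ times the difference of two $\kappa$-operators that are each symmetric in the $i$ and $(i+1)$ slots (one involves copies $1,\dots,i-1$, the other $1,\dots,i+1$), hence $\sigma$-invariant; and (R5) says $x_{i+1}-t_ix_it_i=y_{i+1}-t_iy_it_i$, where both sides equal the single ``link'' operator $\gamma_{M,i+1}+\gamma_{i,i+1}-\sigma(\gamma_{M,i}+\cdots)\sigma$ minus its $M\leftrightarrow$ nothing counterpart — one should check directly that $x_{i+1}-t_ix_it_i$ and $y_{i+1}-t_iy_it_i$ both reduce to $\gamma_{i,i+1}$ (the $\gamma$ between the $i$-th and $(i+1)$-th copies of $V$), which is manifestly independent of $M$ and $N$, giving (R5). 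This observation also sets up (R6), since $m_i$ is designed so that $x_i+y_i-m_i$ collects exactly the ``$V$-only'' contributions, matching $z_i$.

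For (R4), the braid-type relation, I would use that conjugation by $t_it_{i+1}$ (resp.\ $t_{i+1}t_i$) on $V^{\otimes d}$ implements the corresponding permutation of tensor factors up to the signs built into $\sigma$; since $x_{i+1}-t_ix_it_i=\gamma_{i,i+1}$ and $x_{i+2}-t_{i+1}x_{i+1}t_{i+1}=\gamma_{i+1,i+2}$, relation (R4) becomes the assertion that the cyclic permutation $(i,i+1,i+2)\mapsto(i+1,i+2,i)$ carries $\gamma_{i,i+1}$ to $\gamma_{i+1,i+2}$ — true because $\gamma$ is symmetric (the signed $\gamma$ satisfies $\sigma\gamma_{i,i+1}\sigma=\gamma_{i,i+1}$ on the relevant factors after accounting for the $(-1)^{\bar q}$). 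The main obstacle throughout is bookkeeping the Koszul/parity signs: the factor $(-1)^{\overline{q}}$ in $\gamma$ and the sign in $\sigma$ must conspire so that $\gamma$ is genuinely the ``graded'' copy of the Casimir's polarization and behaves symmetrically under signed swaps. I would isolate this as a lemma — that $\sigma_{i,i+1}\,\gamma_{i,i+1}\,\sigma_{i,i+1}=\gamma_{i,i+1}$ and more generally that signed permutations act on the $\gamma$'s by the underlying unsigned permutation of indices — and then all of (R1), (R3), (R4), (R5) follow formally, exactly as in Daugherty's $\mathfrak{gl}_n$ computation in \cite{D}, with $\mathfrak{gl}_n$ replaced by $\mathfrak{gl}(n|m)$ and ordinary swaps by signed swaps. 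Once that sign lemma is in place the rest is a direct transcription; I expect no new conceptual difficulty beyond the careful sign audit.
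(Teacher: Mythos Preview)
Your approach is essentially the same as the paper's: both rewrite $\rho(x_i),\rho(y_i),\rho(z_i)$ in terms of the $\gamma$-operators, use centrality of $\kappa$ for (R2), reduce (R5) and (R4) to the identity $x_{i+1}-t_ix_it_i=\gamma_{i,i+1}$ together with the conjugation rule $t_j\gamma_{k,l}t_j=\gamma_{s_j(k),s_j(l)}$, and verify (R6) via $m_{i,j}=\gamma_{i,j}$; the paper simply carries out the sign bookkeeping explicitly case by case rather than packaging it as your proposed lemma. One small slip: your claim that (R1) is ``immediate from disjoint tensor factors'' is not quite right when $j<i-1$, since then $\rho(x_i)$ contains $\gamma_{j,i}$ and $\gamma_{j+1,i}$, which overlap with $t_j$ --- but your conjugation lemma handles exactly this (it gives $t_j(\gamma_{j,i}+\gamma_{j+1,i})t_j=\gamma_{j+1,i}+\gamma_{j,i}$), so the proof is complete once you invoke it there as well.
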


\begin{proof}
First let us verify the relation (R1).

To verify $t_jx_i=x_it_j$ for $i\neq j,j+1$, notice the action of $x_i$ is given by the element $U(\mathfrak{gl}(n|m))^{\otimes d+2}$ equal to the following:
\begin{align*}
\frac{1}{2}(\kappa_{M,i}-\kappa_{M,i-1})= \frac{1}{2}\kappa_V +  \gamma_{M,i} +\gamma_{1,i} +\cdots +\gamma_{i-1,i}.
\end{align*}

When $k<i<j$, it is easy to see that $\gamma_{k,i}t_j=t_j\gamma_{k,i}$, $\forall k>i$ or when $k$ is the symbol $M$. 

When $j \leq i-2$, we claim that each of $\kappa_V$, $\gamma_{M,i}$, $\gamma_{k,i}$ commutes with $t_j$ for $k\neq j,j+1$, and that 
\begin{align}
(\gamma_{j,i}+\gamma_{j+1,i})t_j=t_j(\gamma_{j,i}+\gamma_{j+1,i}). \label{action1}
\end{align}

In particular, since $\kappa$ is even, the action of $\kappa_i$ on any given tensor introduces no signs, and it commutes with the signed swap map. When $k\neq j,j+1$, $t_j$ commutes with $\gamma_{k,i}$. We discuss by cases: when $k<j$,

\begin{align*}
&t_j \gamma_{k,i} (v_M\otimes v_N\otimes v_1\otimes \cdots \otimes v_d)  \\
=& t_j (\displaystyle\sum_{p,q}(-1)^{\overline{q}}(-1)^{(\overline{v_k}+\cdots+\overline{v_{i-1}})(\overline{p}+\overline{q})}\\
& \cdot v_M\otimes \cdots \otimes  E_{pq}v_k\otimes\cdots \otimes v_j\otimes v_{j+1}\otimes  \cdots\otimes E_{qp}v_i\otimes \cdots\otimes v_d)\\
=& \displaystyle\sum_{p,q}(-1)^{\overline{q}}(-1)^{(\overline{v_k}+\cdots+\overline{v_{i-1}})(\overline{p}+\overline{q})+\overline{v_j}\cdot\overline{v_{j+1}}}\\
& \cdot v_M\otimes \cdots \otimes  E_{pq}v_k\otimes \cdots\otimes v_{j+1}\otimes v_j \otimes   \cdots\otimes E_{qp}v_i\otimes \cdots\otimes v_d.
\end{align*}
On the other hand,
\begin{align*}
&\gamma_{k,j}t_j(v_M\otimes v_N\otimes v_1\otimes \cdots\otimes v_d)\\
=&\gamma_{k,j}\sum_{p,q}(-1)^{\overline{q}}(-1)^{\overline{v_j}\cdot \overline{v_{j+1}}}\\
&\cdot v_M\otimes \cdots \otimes v_k \otimes \cdots \otimes v_{j+1}\otimes v_j\otimes \cdots \otimes v_i \otimes \cdots\otimes v_d\\
=& \displaystyle\sum_{p,q}(-1)^{\overline{q}}(-1)^{(\overline{v_k}+\cdots+\overline{v_{i-1}})(\overline{p}+\overline{q})+\overline{v_j}\cdot\overline{v_{j+1}}}\\
& \cdot v_M\otimes \cdots \otimes  E_{pq}v_k\otimes \cdots\otimes v_{j+1}\otimes v_j \otimes  \cdots\otimes E_{qp}v_i\otimes \cdots\otimes v_d.
\end{align*}
The other case when $j+1<k$ is similar.

To see (\ref{action1}) is true, 
\begin{align*}
&t_j(\gamma_{j,i}+\gamma_{j+1,i})(v_M\otimes v_N\otimes v_1\otimes \cdots \otimes v_d)\\
=&t_j(\displaystyle\sum_{p,q}(-1)^{\overline{q}}(-1)^{(\overline{v_j}+\cdots+\overline{v_{i-1}})(\overline{p}+\overline{q})}v_M\otimes \cdots \otimes E_{pq}v_j\otimes \cdots\otimes E_{qp}v_i\otimes \cdots\otimes v_d)\\
&+t_j(\displaystyle\sum_{p,q}(-1)^{\overline{q}}(-1)^{(\overline{v_{j+1}}+\cdots+\overline{v_{i-1}})(\overline{p}+\overline{q})}v_M\otimes \cdots \otimes E_{pq}v_{j+1}\otimes \cdots\otimes E_{qp}v_i\otimes \cdots\otimes v_d)\\
=&\displaystyle\sum_{p,q}(-1)^{\overline{q}}(-1)^{(\overline{v_j}+\cdots+\overline{v_{i-1}})(\overline{p}+\overline{q})}(-1)^{(\overline{p}+\overline{q}+\overline{v_j})(\overline{v_{j+1}})}\\
&\cdot v_M\otimes \cdots \otimes v_{j+1}\otimes E_{pq}v_j\otimes \cdots\otimes E_{qp}v_i\otimes \cdots\otimes v_d\\
&+\displaystyle\sum_{p,q}(-1)^{\overline{q}}(-1)^{(\overline{v_{j+1}}+\cdots+\overline{v_{i-1}})(\overline{p}+\overline{q})}(-1)^{(\overline{v_j})(\overline{p}+\overline{q}+\overline{v_{j+1}})}\\
&\cdot v_M\otimes \cdots \otimes E_{pq}v_{j+1}\otimes v_j\otimes \cdots\otimes E_{qp}v_i\otimes \cdots\otimes v_d.
\end{align*}
On the other hand,
\begin{align*}
&(\gamma_{j,i}+\gamma_{j+1,i})t_j(v_M\otimes v_N\otimes v_1\otimes \cdots \otimes v_d)\\
=&(\gamma_{j,i}+\gamma_{j+1,i})(-1)^{\overline{v_j}\cdot \overline{v_{j+1}}}(v_M\otimes v_N\otimes v_1\otimes \cdots\otimes v_{j+1}\otimes v_j\otimes \cdots \otimes v_d)\\
=&(-1)^{\overline{v_j}\cdot \overline{v_{j+1}}}\displaystyle\sum_{p,q}(-1)^{\overline{q}}(-1)^{(\overline{v_{j+1}}+\overline{v_{j}}+\overline{v_{j+2}}+\cdots+\overline{v_{i-1}})(\overline{p}+\overline{q})}\\
&\cdot v_M\otimes \cdots \otimes E_{pq}v_{j+1}\otimes v_j\otimes \cdots\otimes E_{qp}v_i\otimes \cdots\otimes v_d\\
&+(-1)^{\overline{v_j}\cdot \overline{v_{j+1}}}\displaystyle\sum_{p,q}(-1)^{\overline{q}}(-1)^{(\overline{v_{j}}+\overline{v_{j+2}}+\cdots+\overline{v_{i-1}})(\overline{p}+\overline{q})}\\
&\cdot v_M\otimes \cdots \otimes v_{j+1}\otimes E_{pq}v_j\otimes \cdots\otimes E_{qp}v_i\otimes \cdots\otimes v_d.
\end{align*}
Hence the two actions are equal if and only if the associated signs are equal
\begin{align*}
{\overline{q}}+{(\overline{v_j}+\cdots+\overline{v_{i-1}})(\overline{p}+\overline{q})}+{(\overline{p}+\overline{q}+\overline{v_j})(\overline{v_{j+1}})}\\
-({\overline{v_j}\cdot \overline{v_{j+1}}}+{\overline{q}}+{(\overline{v_{j}}+\overline{v_{j+2}}+\cdots+\overline{v_{i-1}})(\overline{p}+\overline{q})})=0.
\end{align*}
and
\begin{align*}
{\overline{q}}+{(\overline{v_{j+1}}+\cdots+\overline{v_{i-1}})(\overline{p}+\overline{q})}+{(\overline{v_j})(\overline{p}+\overline{q}+\overline{v_{j+1}})}\\-({\overline{v_j}\cdot \overline{v_{j+1}}}+{\overline{q}}+{(\overline{v_{j+1}}+\overline{v_{j}}+\overline{v_{j+2}}+\cdots+\overline{v_{i-1}})(\overline{p}+\overline{q})})=0.
\end{align*}

Now let us check (R3) holds.

To show that relation $(x_i+x_{i+1})t_i=t_i(x_i+x_{i+1})$ holds (the calculation is similar for $(y_i+y_{i+1})t_i=t_i(y_i+y_{i+1})$,) notice 
\begin{align*}
\rho(x_i+x_{i+1})&=\frac{1}{2}(\kappa_{M,i+1}-\kappa_{M,i-1})\\
&=\kappa_V+\kappa_{i+1}+\gamma_{M,i}+\gamma_{M,i+1}+\displaystyle\sum_{k=1}^{i-1}(\gamma_{k,i}+\gamma_{k,i+1})+\gamma_{i,i+1}.
\end{align*}
We claim that each $\kappa_i+\kappa_{i+1}$, $\gamma_{M,i}+\gamma_{M,i+1}$, $\gamma_{k,i}+\gamma_{k,i+1}$ and $\gamma_{i,i+1}$ commutes with $s_i$.

The calculation for $(\kappa_i+\kappa_{i+1})t_i=t_i(\kappa_i+\kappa_{i+1})$ is straightforward, using the fact that the action of $\kappa_i$ or $\kappa_{i+1}$ introduces no signs. The calculation for $\gamma_{k,i}+\gamma_{k,i+1}$ is as follows and that for $\gamma_{M,i}+\gamma_{M,i+1}$ is very similar.
\begin{align*}
&(\gamma_{k,i}+\gamma_{k,i+1})t_i(v_M\otimes v_N\otimes v_1\otimes \cdots\otimes v_n)\\
=&(\gamma_{k,i}+\gamma_{k,i+1})(-1)^{\overline{v_i}\cdot \overline{v_{i+1}}}(v_M\otimes v_N\otimes v_1\otimes \cdots\otimes v_{i+1}\otimes v_i \otimes  \cdots\otimes v_n)\\
=&(-1)^{\overline{v_i}\cdot \overline{v_{i+1}}} \displaystyle\sum_{p,q}(-1)^{\overline{q}} \\
&((-1)^{(\overline{v_k}+\dots+\overline{v_{i-1}})(\overline{p}+\overline{q})}v_M\otimes v_N\otimes \cdots \otimes E_{pq}v_k\otimes \cdots\otimes E_{qp}v_{i+1}\otimes v_i\otimes \cdots\otimes v_d+\\
&(-1)^{(\overline{v_k}+\dots+\overline{v_{i-1}}+\overline{v_{i+1}})(\overline{p}+\overline{q})}v_M\otimes v_N\otimes \cdots \otimes E_{pq}v_k\otimes \cdots\otimes v_{i+1}\otimes E_{qp} v_i\otimes \cdots\otimes v_d).
\end{align*}
\begin{align*}
&t_i(\gamma_{k,i}+\gamma_{k,i+1})(v_M\otimes v_N\otimes v_1\otimes \cdots\otimes v_n)\\
=&t_i \displaystyle\sum_{p,q}(-1)^{\overline{q}} \\
&((-1)^{(\overline{v_k}+\dots+\overline{v_{i-1}})(\overline{p}+\overline{q})}v_M\otimes v_N\otimes \cdots \otimes E_{pq}v_k\otimes \cdots\otimes E_{qp}v_{i}\otimes v_{i+1}\otimes \cdots\otimes v_d+\\
&(-1)^{(\overline{v_k}+\dots+\overline{v_{i-1}}+\overline{v_{i}})(\overline{p}+\overline{q})}v_M\otimes v_N\otimes \cdots \otimes E_{pq}v_k\otimes \cdots\otimes v_{i}\otimes E_{qp} v_{i+1}\otimes \cdots\otimes v_d)\\
=&\displaystyle\sum_{p,q}(-1)^{\overline{q}}((-1)^{(\overline{v_k}+\dots+\overline{v_{i-1}})(\overline{p}+\overline{q})} (-1)^{(\overline{p}+\overline{q}+\overline{v_i})\overline{v_{i+1}}}\\
&v_M\otimes v_N\otimes \cdots \otimes E_{pq}v_k\otimes \cdots\otimes  v_{i+1}\otimes E_{qp}v_{i}\otimes \cdots\otimes v_d+\\
&(-1)^{(\overline{v_k}+\dots+\overline{v_{i-1}}+\overline{v_i})(\overline{p}+\overline{q})} (-1)^{(\overline{p}+\overline{q}+\overline{v_{i+1}})\overline{v_{i}}}\\
&v_M\otimes v_N\otimes \cdots \otimes E_{pq}v_k\otimes \cdots\otimes E_{qp} v_{i+1}\otimes v_{i}\otimes \cdots\otimes v_d).
\end{align*}
Compare the two pairs of coefficients:
\begin{align*}
&(\overline{v_i}\cdot \overline{v_{i+1}}+\overline{q}+(\overline{v_k}+\dots+\overline{v_{i-1}})(\overline{p}+\overline{q}))-\\
&(\overline{q}+(\overline{v_k}+\dots+\overline{v_{i-1}}+\overline{v_i})(\overline{p}+\overline{q})+(\overline{p}+\overline{q}+\overline{v_{i+1}})\overline{v_{i}})=0.
\end{align*}
and 
\begin{align*}
&(\overline{v_i}\cdot \overline{v_{i+1}}+\overline{q}+(\overline{v_k}+\dots+\overline{v_{i-1}}+\overline{v_{i+1}})(\overline{p}+\overline{q}))-\\
&(\overline{q}+(\overline{v_k}+\dots+\overline{v_{i-1}})(\overline{p}+\overline{q})+(\overline{p}+\overline{q}+\overline{v_{i}})\overline{v_{i+1}})=0.
\end{align*}

To see that $\gamma_{i,i+1}t_i=t_i\gamma_{i,i+1}$, to simplify notation, let $\underline{v}=v_M\otimes v_N\otimes v_1 \otimes \cdots\otimes v_d$, $v_{begin}=v_M\otimes v_N\otimes v_1\otimes \cdots\otimes v_{i-1}$, $v_{end}=v_{i+2}\otimes \cdots\otimes v_d$, then

\begin{align*}
&\gamma_{i,i+1}t_i \underline{v}\\
=&\gamma_{i,i+1}(-1)^{\overline{v_i}\cdot\overline{v_{i+1}}}v_{start}\otimes v_{i+1}\otimes v_i \otimes v_{end}\\
=&(-1)^{\overline{v_i}\cdot\overline{v_{i+1}}} \displaystyle\sum_{p,q}(-1)^{\overline{q}}(-1)^{(\overline{p}+\overline{q})(\overline{v_{i+1}})}
v_{start}\otimes E_{pq}v_{i+1}\otimes E_{qp}v_i \otimes v_{end}.
\end{align*}
Using the fact that all entries in $v_{start}$ have commuted past both $E_{pq}$ and $E_{qp}$ in $\gamma_{i,i+1}$.
On the other hand,
\begin{align*}
t_i\gamma_{i,i+1}\underline{v}
&=t_i \displaystyle\sum_{p,q}(-1)^{\overline{q}}(-1)^{(\overline{p}+\overline{q})\overline{v_i}}v_{start}\otimes E_{pq}v_i \otimes E_{qp}v_{i+1}\otimes v_{end}\\
&=\displaystyle\sum_{p,q}(-1)^{\overline{q}}(-1)^{(\overline{p}+\overline{q})\overline{v_i}}(-1)^{(\overline{p}+\overline{q}+\overline{v_i})(\overline{p}+\overline{q}+\overline{v_{i+1}})}v_{start}\otimes E_{qp}v_{i+1} \otimes E_{pq}v_i \otimes v_{end}\\
&=\displaystyle\sum_{p,q}(-1)^{\overline{p}}(-1)^{(\overline{p}+\overline{q})\overline{v_i}}(-1)^{(\overline{p}+\overline{q}+\overline{v_i})(\overline{p}+\overline{q}+\overline{v_{i+1}})}v_{start}\otimes E_{pq}v_{i+1} \otimes E_{qp}v_i \otimes v_{end}.
\end{align*}
where the last equality is achieved by swapping the indices $p$ and $q$. Now we compare the coefficients (or signs):
\begin{align*}
&(\overline{v_i}\cdot\overline{v_{i+1}}+\overline{q}+(\overline{p}+\overline{q})(\overline{v_{i+1}}))-(\overline{p}+(\overline{p}+\overline{q})\overline{v_i}+(\overline{p}+\overline{q}+\overline{v_i})(\overline{p}+\overline{q}+\overline{v_{i+1}}))\\
=&(\overline{v_i}\cdot\overline{v_{i+1}}+\overline{q}+(\overline{p}+\overline{q})(\overline{v_{i+1}}))-\\
&(\overline{p}+(\overline{p}+\overline{q})\overline{v_i}+(\overline{p}+\overline{q})^2+\overline{v_i}(\overline{p}+\overline{q})+\overline{v_{i+1}}(\overline{p}+\overline{q})+\overline{v_i}\cdot\overline{v_{i+1}})\\
=&\overline{q}-\overline{p}-(\overline{p}+\overline{q})^2=0
\end{align*}

Now let us verify (R2).

To see why $(z_0+z_1+\cdots+z_i)x_i=x_i(z_0+z_1+\cdots+z_i)$, notice that for an element $\kappa_{M,i}$, it is the homomorphic image of $\Delta^i(\kappa)$ under the imbedding $U(\mathfrak{gl}(n|m))^{\otimes (i+1)}\hookrightarrow U(\mathfrak{gl}(n|m))^{\otimes (d+2)}$, and is central in the image. Hence $\kappa_{M,i}$ commutes with $\kappa_{M,j}$, $\kappa_{N,j}$, $\kappa_{M,N,j}$, or the action of $\mathfrak{gl}(n|m)$. Since
\begin{align*}
\rho(z_0+z_1+\cdots+z_i)=\frac{1}{2}(\kappa_{M\otimes N,i}-\kappa_M-\kappa_N+i\kappa_V)
\end{align*}
it follows immediately that it commutes with $\rho(x_i)$.

To show the remaining of the relations, claim that 
\begin{align*}
x_{i+1}-t_ix_it_i=y_{i+1}-t_iy_it_i=\gamma_{i,i+1}
\end{align*}
To see why this is true, first claim that $\gamma_{k,i+1}=t_i\gamma_{k,i}t_i$. This equality extends to the case when $k$ is the symbol $M$, yielding $\gamma_{M,i+1}=t_i\gamma_{M,i}t_i$, and the calculation is similar to the following calculation for an integer $k$.
\begin{align*}
\gamma_{k,i+1}\underline{v}=&\displaystyle\sum_{p,q}(-1)^{\overline{q}}(-1)^{(\overline{v_{k}}+\cdots+\overline{v_{i}})(\overline{p}+\overline{q})}\\
&v_M\otimes \cdots \otimes E_{pq}v_k\otimes\cdots\otimes v_{i}\otimes E_{qp}v_{i+1}\otimes \cdots\otimes v_d.\\
t_i\gamma_{k,i}t_i\underline{v}
=&t_i\gamma_{k,i} (-1)^{\overline{v_i}\cdot\overline{v_{i+1}}}v_M\otimes \cdots \otimes v_k\otimes\cdots\otimes v_{i+1}\otimes v_{i}\otimes \cdots\otimes v_d\\
=&(-1)^{\overline{v_i}\cdot \overline{v_{i+1}}}t_i\displaystyle\sum_{p,q}(-1)^{\overline{q}}(-1)^{(\overline{v_{k}}+\cdots+\overline{v_{i-1}})(\overline{p}+\overline{q})}\\
&\cdot v_M\otimes \cdots \otimes E_{pq}v_k\otimes\cdots\otimes E_{qp} v_{i+1}\otimes v_{i}\otimes \cdots\otimes v_d\\
=&(-1)^{\overline{v_i}\cdot\overline{v_{i+1}}}\displaystyle\sum_{p,q}(-1)^{\overline{q}}(-1)^{(\overline{v_{k}}+\cdots+\overline{v_{i-1}})(\overline{p}+\overline{q})}(-1)^{(\overline{p}+\overline{q}+\overline{v_{i+1}})\overline{v_i}}\\
&\cdot v_M\otimes \cdots \otimes E_{pq}v_k\otimes\cdots\otimes  v_{i}\otimes E_{qp} v_{i+1}\otimes\cdots\otimes v_d.
\end{align*}
and the signs match:
\begin{align*}
&{\overline{q}}+{(\overline{v_{k}}+\cdots+\overline{v_{i}})(\overline{p}+\overline{q})}\\
&-({\overline{v_i}\cdot\overline{v_{i+1}}}+\overline{q}+(\overline{v_{k}}+\cdots+\overline{v_{i-1}})(\overline{p}+\overline{q})+(\overline{p}+\overline{q}+\overline{v_{i+1}})\overline{v_i})=0.
\end{align*}
Under this observation,
\begin{align*}
x_{i+1}-t_ix_it_i=&(\frac{1}{2}\kappa_{V}+\gamma_{M,i+1}+\cdots+\gamma_{i,i+1})-\\
&t_i(\frac{1}{2}\kappa_V+\gamma_{M,i}+\cdots+\gamma_{i-1,i})t_i\\
=&\kappa_{i+1}-t_i\kappa_it_i+\gamma_{i,i+1}.
\end{align*}
and $\kappa_{i+1}=t_i\kappa_it_i$ because the action of $k_i$ introduces no signs. Hence we've shown $x_{i+1}-t_ix_it_i=\gamma_{i,i+1}$. Similarly, $y_{i+1}-t_iy_{i+1}t_i=\gamma_{i,i+1}$ and the relation $x_{i+1}-t_ix_it_i=y_{i+1}-t_iy_{i+1}t_i$ holds.\\

Now let us verify (R6).

To see $z_i=x_i+y_i-m_i$ holds, first verify that $m_{i,j}=\gamma_{i,j}$. The calculation is similar to that of $t_{i}\gamma_{k,i}t_i=\gamma_{k,i+1}$. By definition $m_{i,j}=t_{i,j-1}\gamma_{j-1,j}t_{i,j-1}$ where $t_{i,j-1}$ is the signed permutation that interchanges the $i$ and $j-1$ entries, and is generated by the signed swaps. 

\begin{align*}
&m_{i,j}\underline{v}=t_{i,j-1}\gamma_{j-1,j}t_{i,j-1}\underline{v}\\
=&t_{i,j-1}\gamma_{j-1,j}(-1)^{(\overline{v_i}+\overline{v_{j-1}})(\overline{v_{i+1}}+\cdots+\overline{v_{j-2}})+\overline{v_i}\cdot\overline{v_{j-1}}}\\
&\cdot v_M\otimes \cdots\otimes v_{j-1}\otimes \cdots \otimes v_i \otimes v_j\otimes \cdots\otimes v_d\\
=&(-1)^{(\overline{v_i}+\overline{v_{j-1}})(\overline{v_{i+1}}+\cdots+\overline{v_{j-2}})+\overline{v_i}\cdot\overline{v_{j-1}}}t_{i,j-1}\displaystyle\sum_{p,q}(-1)^{\overline{q}}(-1)^{(\overline{p}+\overline{q})\overline{v_i}}\\
&\cdot v_M\otimes \cdots\otimes v_{j-1}\otimes \cdots  \otimes E_{pq}v_i \otimes E_{qp}v_j\otimes \cdots\otimes v_d\\
=&(-1)^{(\overline{v_i}+\overline{v_{j-1}})(\overline{v_{i+1}}+\cdots+\overline{v_{j-2}})+\overline{v_i}\cdot\overline{v_{j-1}}}t_{i,j-1}\displaystyle\sum_{p,q}(-1)^{\overline{q}}(-1)^{(\overline{p}+\overline{q})\overline{v_i}}\\
&(-1)^{(\overline{v_i}+\overline{v_{j-1}}+\overline{p}+\overline{q})(\overline{v_{i+1}}+\cdots+\overline{v_{j-2}})+(\overline{v_i}+\overline{p}+\overline{q})\cdot\overline{v_{j-1}}}\\
&\cdot v_M\otimes \cdots\otimes  E_{pq}v_i\otimes \cdots  \otimes v_{j-1} \otimes E_{qp}v_j\otimes \cdots\otimes v_d\\
=&(-1)^{(\overline{p}+\overline{q})(\overline{v_i}+\cdots+\overline{v_{j-1}})}\displaystyle\sum_{p,q}(-1)^{\overline{q}}\\
&\cdot v_M\otimes \cdots\otimes  E_{pq}v_i\otimes \cdots  \otimes v_{j-1} \otimes E_{qp}v_j\otimes \cdots\otimes v_d\\
=&\gamma_{i,j}\underline{v}.
\end{align*} 

Hence $m_j=\gamma_{1,j}+\cdots+\gamma_{j-1,j}$ for $1<j<d$, and 
\begin{align*}
x_j&=\frac{1}{2}\kappa_V+\gamma_{M,j}+\gamma_{1,j}+\cdots+\gamma_{j-1,j}\\
y_j&=\frac{1}{2}\kappa_V+\gamma_{N,j}+\gamma_{1,j}+\cdots+\gamma_{j-1,j}\\
z_j&=\kappa_V+\gamma_{M,j}+\gamma_{N,j}+\gamma_{1,j}+\dots+\gamma_{j-1,j}.
\end{align*}
Hence $z_j=x_j+y_j-m_j$.

The relation $t_it_{i+1}(x_{i+1}-t_ix_it_i)t_{i+1}t_i=(x_{i+2}-t_{i+1}x_{i+1}t_{i+1})$ now is equivalent to $t_it_{i+1}\gamma_{i,i+1}t_{i+1}t_i=\gamma_{i+1,i+2}$. For short let $v_{begin}=v_M\otimes v_N\otimes \cdots \otimes v_{i-1}$, $v_{end}=v_{i+3}\otimes \cdots\otimes v_d$ (this has a slightly different index from the similar one used earlier.) then

\begin{align*}
&t_it_{i+1}\gamma_{i,i+1}t_{i+1}t_i \underline{v}\\
=&t_it_{i+1}\gamma_{i,i+1}t_{i+1} (-1)^{\overline{v_i}\cdot \overline{v_{i+1}}} v_{begin}\otimes v_{i+1} \otimes v_{i} \otimes v_{i+2} \otimes v_{end}\\
=&t_it_{i+1}\gamma_{i,i+1} (-1)^{\overline{v_i}\cdot \overline{v_{i+1}}+\overline{v_{i}}\cdot \overline{v_{i+2}}} v_{begin}\otimes v_{i+1} \otimes v_{i+2}\otimes v_{i}  \otimes v_{end}\\
=&t_it_{i+1} \displaystyle\sum_{p,q}(-1)^{\overline{q}}(-1)^{\overline{v_i}\cdot \overline{v_{i+1}}+\overline{v_{i}}\cdot \overline{v_{i+2}}+(\overline{p}+\overline{q})\overline{v_{i+1}}}\\
& v_{begin}\otimes E_{pq}v_{i+1} \otimes E_{qp}v_{i+2}\otimes v_{i}  \otimes v_{end}\\
=& t_i\displaystyle\sum_{p,q}(-1)^{\overline{q}}(-1)^{\overline{v_i}\cdot \overline{v_{i+1}}+\overline{v_{i}}\cdot \overline{v_{i+2}}+(\overline{p}+\overline{q})\overline{v_{i+1}}+\overline{v_i}(\overline{p}+\overline{q}+\overline{v_{i+2}})}\\
& v_{begin}\otimes E_{pq}v_{i+1} \otimes  v_{i}  \otimes E_{qp}v_{i+2}\otimes v_{end}\\
=& \displaystyle\sum_{p,q}(-1)^{\overline{q}}(-1)^{\overline{v_i}\cdot \overline{v_{i+1}}+\overline{v_{i}}\cdot \overline{v_{i+2}}+(\overline{p}+\overline{q})\overline{v_{i+1}}+\overline{v_i}(\overline{p}+\overline{q}+\overline{v_{i+2}})+\overline{v_i}(\overline{p}+\overline{q}+\overline{v_{i+1}})}\\
& v_{begin}\otimes   v_{i}  \otimes E_{pq}v_{i+1} \otimes E_{qp}v_{i+2}\otimes v_{end}.
\end{align*}
On the other hand,
\begin{align*}
\gamma_{i+1,i+2}\underline{v}=\displaystyle\sum_{p,q}(-1)^{\overline{q}+(\overline{p}+\overline{q})(\overline{v_{i+1}})}v_{begin}\otimes v_i\otimes E_{pq}v_{i+1}\otimes E_{qp}v_{i+2}\otimes v_{end}
\end{align*}
It remains to check the signs match:
\begin{align*}
&(\overline{q}+\overline{v_i}\cdot \overline{v_{i+1}}+\overline{v_{i}}\cdot \overline{v_{i+2}}+(\overline{p}+\overline{q})\overline{v_{i+1}}+\overline{v_i}(\overline{p}+\overline{q}+\overline{v_{i+2}})+\overline{v_i}(\overline{p}+\overline{q}+\overline{v_{i+1}}))-\\
&(\overline{q}+(\overline{p}+\overline{q})(\overline{v_{i+1}}))=0
\end{align*}
Hence all relations are preserved.
\end{proof}

This allows for another action defined as follows:

\begin{lemma}\label{action}
There is an algebra homomorphism
\begin{align*}
\rho':\mathcal{G}_d &\to \operatorname{End}_{\mathfrak{gl}(n|m)}(M\otimes N\otimes V^{\otimes d})\\
x_i  &\mapsto \rho(x_i)-\frac{1}{2}\kappa_V \hspace{.2 in} 1\leq i\leq d\\
y_i  &\mapsto \rho(y_i)-\frac{1}{2}\kappa_V \hspace{.2 in} 1\leq i\leq d \\
z_i  &\mapsto \rho(z_i)-\kappa_V \hspace{.2 in} 1\leq i \leq d \\
z_0 & \mapsto \rho(z_0).
\end{align*}
\end{lemma}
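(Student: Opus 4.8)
The plan is to verify that $\rho'$ respects each of the defining relations (R1)--(R6) of $\mathcal{G}_d$, and the point is that $\rho'$ is obtained from the homomorphism $\rho$ of the preceding theorem by subtracting from each polynomial generator a scalar operator. Write $c=\tfrac12\kappa_V\,\mathrm{id}$ for the relevant scalar, so that $\rho'(x_i)=\rho(x_i)-c$, $\rho'(y_i)=\rho(y_i)-c$ and $\rho'(z_i)=\rho(z_i)-2c$ for $1\le i\le d$, while $\rho'(z_0)=\rho(z_0)$. Because $c$ is a multiple of the identity it is central in $\operatorname{End}_{\mathfrak{gl}(n|m)}(M\otimes N\otimes V^{\otimes d})$, commutes with every $\rho(t_i)$, and is fixed by conjugation by any element of $\rho(\mathbb{C}\Sigma_d)$; also $\rho'$ still lands in $\operatorname{End}_{\mathfrak{gl}(n|m)}$ since adding a scalar operator preserves commutation with $\mathfrak{g}$.

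For the relations (R1), (R2) and (R3), which only assert that two elements commute, preservation is automatic: if $A,B$ commute and $a,b$ are central then $(A-a)(B-b)=(B-b)(A-a)$. In (R1) the conjugated generator is shifted by a central scalar that $t_j$ fixes; in (R2) the sum $z_0+\cdots+z_i$ is shifted by the central scalar $2ic$ and $x_j$ (or $y_j$) by $c$, while the unshifted operators already commute by the preceding theorem; in (R3) the sum $x_i+x_{i+1}$ (or $y_i+y_{i+1}$) is shifted by $2c$, which commutes with $t_i$. For the remaining relations the key observation is that the combination $x_{i+1}-t_ix_it_i$ is \emph{unchanged} by the shift, because $\rho(t_i)$ fixes $c$ and $\rho(t_i)^2=\mathrm{id}$:
\begin{align*}
\rho'(x_{i+1})-\rho(t_i)\rho'(x_i)\rho(t_i)=\bigl(\rho(x_{i+1})-c\bigr)-\rho(t_i)\bigl(\rho(x_i)-c\bigr)\rho(t_i)=\rho(x_{i+1})-\rho(t_i)\rho(x_i)\rho(t_i),
\end{align*}
and the same with $y$ in place of $x$. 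Hence (R5) and (R4), which are statements about these combinations and their $\rho(\mathbb{C}\Sigma_d)$-conjugates, hold for $\rho'$ exactly because they hold for $\rho$. Moreover every $m_{i,j}$, and hence every $m_j$, is a $\rho(\mathbb{C}\Sigma_d)$-conjugate of such a combination built from the $x$-generators alone, so $\rho'(m_j)=\rho(m_j)$; therefore in (R6) the left side $z_i$ is shifted by $2c$ and the right side $x_i+y_i-m_i$ by $c+c-0=2c$, the shifts agree, and (R6) is preserved.

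Because the whole argument rests only on the centrality of $c$ and the idempotency of the $\rho(t_i)$, there is no genuine obstacle. The one place to be careful is the bookkeeping in (R6): one must note that $m_j$ is manufactured solely from the $x_i$ through the shift-invariant combinations $x_{j+1}-t_jx_jt_j$, with no $y$- or $z$-generators entering, so that $\rho'(m_j)=\rho(m_j)$, and then check that the net shift $c+c$ coming from $x_i+y_i$ matches the shift $2c$ imposed on $z_i$.
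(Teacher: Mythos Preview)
Your proof is correct. The paper's own proof is a one-liner: it cites \cite[Lemma~3.4]{D}, which asserts that shifting the polynomial generators of $\mathcal{G}_d$ by suitable constants defines an automorphism $\phi:\mathcal{G}_d\to\mathcal{G}_d$, and then observes that $\rho'=\rho\circ\phi$. Your argument is essentially an inline proof of (the relevant special case of) that cited lemma, carried out on the endomorphism side rather than on the abstract algebra: you check directly that each relation (R1)--(R6) survives the constant shift, exploiting that the shifts are central scalars and that the combinations $x_{i+1}-t_ix_it_i$ (hence all $m_{i,j}$ and $m_j$) are shift-invariant. The two approaches are mathematically the same content; yours is self-contained and avoids the external reference, while the paper's is terser and makes the automorphism structure explicit.
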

\begin{proof}
The constants satisfy the conditions in  \cite[Lemma~3.4]{D} and therefore defines an algebra automorphism $\phi: \mathcal{G}_d \to \mathcal{G}_d$ where generators $x_i$, $y_i$, $z_i$ are shifted by the given constants. The homomorphism $\rho'=\rho \circ \phi$ is as above and therefore an algebra homomorphism.
\end{proof}

\subsection{The Hecke algebra $\mathcal{H}^{\operatorname{ext}}_d$ and its action}

Recall the set of hook Young diagrams satisfying condition (H1) and the set of weights satisfying condition (H2) established in Section~\ref{young} and the bijection between them through the map $\overline{\cdot}$. For the remainder of this article, fix $a,b,p,q \in \mathbb{Z}_{>0}$ such that $a,b \leq m$, $a\geq p-n$ and $b\geq q-n$. Denote by $(a^p)$ the partition $a\geq a \geq \cdots \geq a \geq 0 \geq \cdots$ with $p$ copies of $a$ (i.e. the rectangle with $p$ rows of $a$ boxes,) $(b^q)$ the partition $b\geq b \geq \cdots \geq b \geq 0 \geq \cdots  $ with $q$ copies of $b$ (i.e. the rectangle with $q$ rows of $b$ boxes.) Note the conditions on $a,b,p,q$ ensures that $(a^p)$ and $(b^q)$ are hook Young diagrams. Define the following weights

\begin{align*}
\alpha=\overline{(a^p)}=\begin{cases}
(a^p) \hspace{.1 in} \text{if }p\leq n \\
(a^n)\cup ((p-n)^a) \hspace{.1 in}\text{if } p>n
\end{cases},\\
\beta=\overline{(b^q)}=\begin{cases}
(b^q) \hspace{.1 in} \text{if }q\leq n \\
(b^q)\cup ((q-n)^b) \hspace{.1 in} \text{if } q>n
\end{cases}.
\end{align*}

Here, by $(a^n)\cup ((p-n)^a)$ we mean putting the rectangle $((p-n)^a)$ right underneath the rectangle $(a^n)$, and similarly for the partition $(b^q)\cup ((q-n)^b) $. 

For the given choices of positive integers $a,b,p,q$ earlier, let the extended degenerate two-boundary Hecke algebra $\mathcal{H}^{\operatorname{ext}}_d$ be the quotient of $\mathcal{G}_d$ under the added relations:
\begin{align*}
(x_1-a)(x_1+p)&=0\\
(y_1-b)(y_1+q)&=0\\
x_{i+1}&=t_ix_it_i+t_i\\
y_{i+1}&=t_iy_it_i+t_i\\
&(1\leq i\leq d-1)
\end{align*}

Recall that $\alpha=\overline{(a^p)}$, $\beta=\overline{(b^q)}$. From now on take $M=L(\alpha)$ and $N=L(\beta)$ for the rest of the discussion. The goal of this section is to show that the action of $\mathcal{G}_d$ factors through the relations and induces an action of $\mathcal{H}^{\operatorname{ext}}_d$ on  $L(\alpha)\otimes L(\beta)\otimes V^{\otimes d}$. This requires a few lemmas.

\begin{lemma}\label{weightaction}
Let $L(\lambda)$ be defined as before, then $\kappa$ acts on $L(\lambda)$ by a scalar 
\begin{align*}
\langle\lambda,\lambda+2\rho\rangle.
\end{align*}
\end{lemma}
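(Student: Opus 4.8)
The plan is to compute the scalar by which the Casimir $\kappa$ acts on a highest weight vector $v_\lambda \in L(\lambda)$, since $\kappa$ is central and $L(\lambda)$ is irreducible, so the scalar is determined by its action on any single vector. Write $\kappa = \sum_{i,j}(-1)^{\overline j}E_{ij}E_{ji}$ and split the sum into the diagonal terms $i=j$, the terms with $i<j$ (which involve $E_{ij}$ a positive root vector applied first—wait, note $E_{ji}$ acts first on $v_\lambda$), and the terms with $i>j$. The standard trick: for $i<j$, $E_{ij}$ is a raising operator (root $\epsilon_i - \epsilon_j \in \Phi^+$) and $E_{ji}$ is a lowering operator; we rewrite $E_{ij}E_{ji} = \pm E_{ji}E_{ij} + [E_{ij},E_{ji}]_{\text{super}}$, using the super-bracket $[E_{ij},E_{ji}] = E_{ij}E_{ji} - (-1)^{\overline{i+j}\cdot\overline{i+j}}E_{ji}E_{ij}$. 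Since $E_{ij}v_\lambda = 0$ for $i<j$ (highest weight), the term $E_{ji}E_{ij}$ contributes nothing, so $E_{ij}E_{ji}v_\lambda = [E_{ij},E_{ji}]v_\lambda$ up to the sign bookkeeping, and $[E_{ij},E_{ji}] = E_{ii} - (-1)^{\overline i + \overline j}E_{jj}$ lies in $\mathfrak h$ and acts on $v_\lambda$ by $\lambda_i - (-1)^{\overline i + \overline j}\lambda_j$.

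The key steps in order: (1) fix the highest weight vector $v_\lambda$ and note $E_{ij}v_\lambda = 0$ for all $i<j$; (2) for the diagonal part $\sum_i (-1)^{\overline i}E_{ii}^2$, this acts by $\sum_i (-1)^{\overline i}\lambda_i^2 = \langle\lambda,\lambda\rangle$ by the definition of the bilinear form $\langle\epsilon_i,\epsilon_j\rangle = (-1)^{\overline i}\delta_{ij}$; (3) for the off-diagonal part, pair up the $(i,j)$ and $(j,i)$ terms with $i<j$: the $(i,j)$ term $(-1)^{\overline j}E_{ij}E_{ji}$ and observe that only the lower-triangular piece $E_{ji}$ acting first matters. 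Carefully: $E_{ji}v_\lambda$ is generally nonzero (it is a lowering operator), so I instead compute $(-1)^{\overline i}E_{ji}E_{ij}v_\lambda + (-1)^{\overline j}E_{ij}E_{ji}v_\lambda$. The first term vanishes since $E_{ij}v_\lambda = 0$. For the second, use the super-commutator: $E_{ij}E_{ji} = [E_{ij},E_{ji}] + (-1)^{(\overline i + \overline j)(\overline i + \overline j)}E_{ji}E_{ij}$, and again $E_{ji}E_{ij}v_\lambda = 0$; so $(-1)^{\overline j}E_{ij}E_{ji}v_\lambda = (-1)^{\overline j}(E_{ii} - (-1)^{\overline i + \overline j}E_{jj})v_\lambda = (-1)^{\overline j}\lambda_i v_\lambda - (-1)^{\overline i}\lambda_j v_\lambda$. (4) Sum over $1\le i<j\le n+m$ and compare with $2\langle\lambda,\rho\rangle = \sum_{\alpha\in\Phi^+_{\overline 0}}\langle\lambda,\alpha\rangle - \sum_{\alpha\in\Phi^+_{\overline 1}}\langle\lambda,\alpha\rangle$, checking that $\langle\lambda, \epsilon_i - \epsilon_j\rangle = (-1)^{\overline i}\lambda_i - (-1)^{\overline j}\lambda_j$ and that the parity sign $(-1)^{\overline j}$ (resp. the asymmetry between the two contributions) exactly reproduces the alternating sum defining $2\rho$. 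Combining (2) and (4) gives $\kappa v_\lambda = \langle\lambda,\lambda\rangle v_\lambda + \langle\lambda,2\rho\rangle v_\lambda = \langle\lambda,\lambda+2\rho\rangle v_\lambda$.

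The main obstacle I anticipate is the sign bookkeeping in step (3)–(4): making sure the super-commutator $[E_{ij},E_{ji}]$ is computed with the correct parity $\overline{E_{ij}} = \overline i + \overline j$, that the two terms $(i,j)$ and $(j,i)$ in the original sum are correctly recombined (one of which is killed by highest weight and which one survives depends on reading $\kappa$ as written, with the index order $E_{ij}E_{ji}$), and that the resulting expression $\sum_{i<j}\big((-1)^{\overline j}\lambda_i - (-1)^{\overline i}\lambda_j\big)$ genuinely equals $\langle\lambda,2\rho\rangle$ where $2\rho = \sum_{\alpha\in\Phi^+_{\overline 0}}\alpha - \sum_{\alpha\in\Phi^+_{\overline 1}}\alpha$. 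The root $\epsilon_i - \epsilon_j$ is even iff $\overline i = \overline j$ and odd iff $\overline i \ne \overline j$; one checks that in the even case $(-1)^{\overline j}\lambda_i - (-1)^{\overline i}\lambda_j = (-1)^{\overline i}(\lambda_i - \lambda_j) = \langle\lambda, \epsilon_i - \epsilon_j\rangle$ (a $+$ contribution, as $2\rho$ demands), and in the odd case $(-1)^{\overline j}\lambda_i - (-1)^{\overline i}\lambda_j = -\big((-1)^{\overline i}\lambda_i - (-1)^{\overline j}\lambda_j\big)\cdot(\pm 1) = -\langle\lambda,\epsilon_i-\epsilon_j\rangle$ (a $-$ contribution). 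This matches the definition of $2\rho$ precisely, closing the argument. I would present this verification explicitly since it is the only non-formal point; everything else is the routine PBW/highest-weight computation.
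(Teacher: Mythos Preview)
Your proposal is correct and follows essentially the same approach as the paper: compute $\kappa$ on a highest weight vector, split the double sum into diagonal, $i<j$, and $i>j$ parts, kill the terms involving $E_{ij}v_\lambda$ for $i<j$ via the super-commutator $[E_{ij},E_{ji}]=E_{ii}-(-1)^{\overline i+\overline j}E_{jj}$, and identify the resulting expression with $\langle\lambda,\lambda\rangle+\langle\lambda,2\rho\rangle$. Your explicit case-check that $(-1)^{\overline j}\lambda_i-(-1)^{\overline i}\lambda_j$ contributes $\pm\langle\lambda,\epsilon_i-\epsilon_j\rangle$ according to the parity of the root is exactly what the paper encodes by writing the off-diagonal contribution as $\sum_{i<j}(-1)^{\overline i+\overline j}\langle\lambda,\epsilon_i-\epsilon_j\rangle$.
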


\begin{proof}
Since $L(\lambda)$ is simple, we can calculate the scalar action of $\kappa$ on the highest weight vector $v_{\lambda}$ in $L(\lambda)$. Without further notation, the following sum is assumed over all integers $1\leq i,j\leq n+m$. In particular,
\begin{align*}
\kappa.v_{\lambda}&=\displaystyle\sum_{i,j}(-1)^{\bar{j}}E_{i,j}E_{j,i}v_{\lambda}\notag\\
=&\displaystyle\sum_{i}E_{i,i}^2v_{\lambda}+\displaystyle\sum_{i<j}(-1)^{\bar{j}}E_{i,j}E_{j,i}v_{\lambda}+\displaystyle\sum_{i>j}(-1)^{\bar{j}}E_{i,j}E_{j,i}v_{\lambda} \notag \\
=&\displaystyle\sum_{i}E_{i,i}^2v_{\lambda}+\displaystyle\sum_{i<j}(-1)^{\bar{j}}(E_{ii}-(-1)^{(\bar{i}+\bar{j})^2}E_{jj}+(-1)^{(\bar{i}+\bar{j})^2}E_{ji}E_{ij})v_{\lambda}\\
&+\displaystyle\sum_{i>j}(-1)^{\bar{i}}E_{i,j}E_{j,i}v_{\lambda} \notag \\
=&\displaystyle\sum_{i}h_{i}^2v_{\lambda}-\displaystyle\sum_{i<j}(-1)^{\bar{i}}h_jv_{\lambda}+\displaystyle\sum_{i<j}(-1)^{\bar{j}}h_iv_{\lambda}.
\end{align*}
Here by $i<j$ we mean the sum is over all paris $(i,j)$ such that $i<j$. We also used the fact that 
\begin{align*}
[E_{ji},E_{ij}]=E_{jj}-(-1)^{\bar{i}+\bar{j}}E_{ii},
\end{align*}
where $\bar{i}+\bar{j}=(\bar{i}+\bar{j})^2$, and notice $E_{ij}v_{\lambda}=0$ for $i<j$.
For the bilinear form $\langle\epsilon_i,\epsilon_j\rangle=(-1)^{\bar{i}}\delta_{ij}$ defined on $\mathfrak{h}^*$, 
\begin{align*}
h_iv_{\lambda}=\lambda(h_i)v_{\lambda}=\lambda_iv_{\lambda}=(-1)^{\bar{i}}\langle\lambda,\epsilon_i\rangle v_{\lambda}.
\end{align*}
The above calculation becomes
\begin{align*}
\kappa v_{\lambda}&= \displaystyle\sum_{i}\lambda_i^2v_{\lambda}-\displaystyle\sum_{i<j}(-1)^{\bar{i}+\bar{j}}\langle \lambda,\epsilon_j\rangle v_{\lambda}+\displaystyle\sum_{i<j}(-1)^{\bar{i}+\bar{j}} \langle \lambda,\epsilon_i \rangle v_{\lambda} \\
&=\langle \lambda,\lambda \rangle v_{\lambda}+\displaystyle\sum_{i<j}(-1)^{\bar{i}+\bar{j}}\langle \lambda,\epsilon_i-\epsilon_j\rangle v_{\lambda}\\
&=\langle \lambda,\lambda+2\rho\rangle v_{\lambda},
\end{align*}
since $\bar{i}+\bar{j}=0$ precisely when $\epsilon_i-\epsilon_j$ is even.
\end{proof}

We also need the following lemma
\begin{lemma}\label{inner}
The following is true
\begin{align*}
\langle\epsilon_i,\epsilon_i+2\rho \rangle=\begin{cases}
-2i+2+n-m \hspace{.5 in}\text{if }1\leq i\leq n \\
2i-2-3n-m \hspace{.5 in}\text{if }n+1\leq i\leq m
\end{cases}.
\end{align*}
\end{lemma}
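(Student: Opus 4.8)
The plan is to compute $\langle \epsilon_i, \epsilon_i + 2\rho\rangle$ directly from the definition of $2\rho$ and the bilinear form. First I would record that $\langle \epsilon_i,\epsilon_i\rangle = (-1)^{\overline{i}}$, so it remains to evaluate $2\langle \epsilon_i,\rho\rangle$. Using $2\rho = \sum_{\alpha\in\Phi^+_{\overline 0}}\alpha - \sum_{\alpha\in\Phi^+_{\overline 1}}\alpha$, I would split the positive roots $\epsilon_k - \epsilon_l$ ($k<l$) according to the three regions: both indices in $\{1,\dots,n\}$ (even roots), both in $\{n+1,\dots,n+m\}$ (even roots), and one in each block (odd roots). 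Pairing each such root with $\epsilon_i$ under $\langle,\rangle$ picks out only the terms where $k=i$ (contributing $+(-1)^{\overline i}$) or $l=i$ (contributing $-(-1)^{\overline i}$), so the computation reduces to counting, for a fixed $i$, how many positive roots have $i$ as their smaller index versus their larger index, with signs determined by the parity of the root.

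Concretely, for $1\le i\le n$: the even roots with $i$ as the first index are $\epsilon_i-\epsilon_l$ for $i<l\le n$, giving $(n-i)$ terms each contributing $+1$ after multiplying by $(-1)^{\overline i}=1$ and the $+$ sign from $\Phi^+_{\overline 0}$; the even roots with $i$ as the second index are $\epsilon_k-\epsilon_i$ for $k<i$, giving $(i-1)$ terms each contributing $-1$; the odd roots involving $i$ are $\epsilon_i-\epsilon_l$ with $n+1\le l\le n+m$, of which there are $m$, each contributing $-(+1)$ because of the overall $-$ sign in front of $\sum_{\Phi^+_{\overline 1}}$ and the $+$ sign of $i$ being the first index. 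Summing, $2\langle\epsilon_i,\rho\rangle = (n-i) - (i-1) - m = n - 2i + 1 - m$, and adding $\langle\epsilon_i,\epsilon_i\rangle = 1$ gives $-2i+2+n-m$, as claimed. For $n+1\le i\le m$ (reading the stated range as $n+1\le i\le n+m$): now $\langle\epsilon_i,\epsilon_i\rangle = -1$; the even roots are those with both indices in the odd block, contributing $-(m - (i-n))\cdot(-1)$ as first-index terms... — more carefully, first-index even roots $\epsilon_i-\epsilon_l$, $i<l\le n+m$, number $(n+m-i)$ and contribute $(-1)\cdot(+1)$ each; second-index even roots $\epsilon_k-\epsilon_i$, $n+1\le k<i$, number $(i-n-1)$ and contribute $(-1)\cdot(-1)$ each; the odd roots with $i$ as second index are $\epsilon_k-\epsilon_i$, $1\le k\le n$, numbering $n$, contributing (from the overall minus sign, and $i$ being the larger index) $-(-1)\cdot(-1)\cdot(-1)=+1$ each — I would be careful tracking these three sign sources. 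Collecting and adding $-1$ should yield $2i - 2 - 3n - m$.

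The main obstacle is purely bookkeeping: keeping straight the three independent sources of sign — the intrinsic $(-1)^{\overline i}$ from $\langle \epsilon_i,\epsilon_i\rangle$, the $\pm$ from whether $\epsilon_i$ appears as the positive or negative part of each root, and the $\pm$ from whether the root lies in $\Phi^+_{\overline 0}$ or $\Phi^+_{\overline 1}$ — and getting the counts of roots in each of the three index-regions exactly right, including the boundary cases $i=n$ and $i=n+1$. A useful consistency check is to verify the formula agrees with Lemma~\ref{weightaction} applied to $\lambda = \epsilon_i$, i.e.\ that $\kappa$ acts on the (one-dimensional-weight-space) situation by the predicted scalar, and to confirm the classical $\mathfrak{gl}_n$ specialization $m=0$ reduces to the familiar value. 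I would present the even-block case in full and remark that the odd-block case is entirely analogous with the parity signs reversed.
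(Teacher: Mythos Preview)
Your approach is exactly what the paper intends: it records the lemma as ``a straightforward calculation,'' and your direct expansion of $\langle\epsilon_i,\epsilon_i\rangle + \langle\epsilon_i,2\rho\rangle$ by counting positive roots is precisely that calculation, with the even-block case carried out correctly. One small slip in the odd-block sketch: each odd root $\epsilon_k-\epsilon_i$ ($1\le k\le n$) contributes $-\langle\epsilon_i,\epsilon_k-\epsilon_i\rangle = -(0-(-1)) = -1$, not $+1$, so the odd-root total is $-n$; with that fix the sum reads $-(n+m-i)+(i-n-1)-n = 2i-3n-m-1$, and adding $\langle\epsilon_i,\epsilon_i\rangle=-1$ gives the stated $2i-2-3n-m$.
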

\begin{proof}
This is a straightforward calculation.
\end{proof}

Note that for the natural representation $V$ of $\mathfrak{g}$, $V=L(\epsilon_1)$. 

\begin{lemma}\label{pierri}
When $L(\lambda)$ occurs as a direct summand of $L(\mu)\otimes V$, $\gamma$ acts on $L(\lambda)$ as a scalar 
\begin{align*}
\gamma^{\lambda}_{\mu,\epsilon_1}=c(b) 
\end{align*}

where $c(b)$ is the content of the box $b$ added to $\overline{\mu}$ to obtain $\overline{\lambda}$.
\end{lemma}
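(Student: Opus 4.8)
The plan is to compute the scalar by which $\gamma$ acts on the summand $L(\lambda)$ of $L(\mu)\otimes V$ by relating $\gamma$ to the Casimir element $\kappa$ via the coproduct formula $\Delta(\kappa)=1\otimes\kappa+\kappa\otimes 1+2\gamma$, already recorded in the excerpt. Since $\kappa$ is central and $L(\lambda)$, $L(\mu)$, $V=L(\epsilon_1)$ are all simple, each of the three Casimir terms acts by a scalar on $L(\lambda)\subseteq L(\mu)\otimes V$: by Lemma \ref{weightaction} these scalars are $\langle\lambda,\lambda+2\rho\rangle$, $\langle\mu,\mu+2\rho\rangle$ and $\langle\epsilon_1,\epsilon_1+2\rho\rangle$ respectively. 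Hence
\begin{align*}
\gamma^{\lambda}_{\mu,\epsilon_1}=\tfrac{1}{2}\bigl(\langle\lambda,\lambda+2\rho\rangle-\langle\mu,\mu+2\rho\rangle-\langle\epsilon_1,\epsilon_1+2\rho\rangle\bigr).
\end{align*}

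Next I would identify the weight $\lambda$ combinatorially. Since $L(\lambda)$ occurs in $L(\mu)\otimes V$, by the Littlewood--Richardson rule of Section \ref{lr} (with one factor the single box), $\overline{\lambda}$ is obtained from $\overline{\mu}$ by adding exactly one box $b$; equivalently $\lambda=\mu+\epsilon_i$ where $i$ is the row of $\overline{\mu}$ in which the box is added. Therefore
\begin{align*}
\langle\lambda,\lambda+2\rho\rangle-\langle\mu,\mu+2\rho\rangle
=\langle\mu+\epsilon_i,\mu+\epsilon_i+2\rho\rangle-\langle\mu,\mu+2\rho\rangle
=2\langle\mu,\epsilon_i\rangle+\langle\epsilon_i,\epsilon_i\rangle+2\langle\rho,\epsilon_i\rangle,
\end{align*}
using bilinearity and symmetry of $\langle,\rangle$. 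Now $\langle\epsilon_i,\epsilon_i\rangle=(-1)^{\overline{i}}$ and $2\langle\mu,\epsilon_i\rangle=2(-1)^{\overline{i}}\mu_i$, and $\langle\epsilon_i,\epsilon_i+2\rho\rangle$ is given explicitly by Lemma \ref{inner}. Substituting back, together with $\langle\epsilon_1,\epsilon_1+2\rho\rangle$ from the $i=1$ case of Lemma \ref{inner}, reduces $\gamma^{\lambda}_{\mu,\epsilon_1}$ to an elementary expression in $\mu_i$, $i$, $n$, $m$.

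The remaining step, which is really the crux, is the bookkeeping that turns this expression into the content $c(b)$ of the added box of $\overline{\lambda}$. Here one must split into the two cases $1\leq i\leq n$ and $n+1\leq i\leq n+m$ dictated by Lemma \ref{inner}, and in each case express the content of the new box (column index minus row index in the diagram $\overline{\lambda}$) in terms of $i$ and $\mu_i$: when the box sits in an ``even'' row $i\leq n$ of $\overline{\lambda}$ it is added at the end of that row, so it lies in column $\mu_i+1$ and row $i$, giving content $\mu_i+1-i$; when it sits in an ``odd'' row the transpose construction of the map $\overline{\cdot}$ converts a change in a column of $\mu_{\mathrm{odd}}$ into the relevant row/column data, and one checks the content formula matches the scalar computed above. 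I expect the main obstacle to be precisely this combinatorial verification in the odd rows: one has to track carefully how the ``paste the transpose underneath'' recipe for $\overline{\cdot}$ interacts with adding a box, and confirm that the content of the box in $\overline{\lambda}$ — not in the hook diagram $\lambda$ itself — is what comes out of the Casimir difference. Once both cases are checked, the two computations agree with $c(b)$ and the lemma follows.
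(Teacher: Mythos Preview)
Your approach is the same as the paper's: reduce to a Casimir difference via $\Delta(\kappa)=\kappa\otimes 1+1\otimes\kappa+2\gamma$, then split into even and odd row cases and compute. One imprecision worth correcting: the claim ``$\lambda=\mu+\epsilon_i$ where $i$ is the row of $\overline{\mu}$ in which the box is added'' is only valid when the box lands in rows $1,\dots,n$; if the box sits in row $r>n$ and column $l$ of the hook diagram $\overline{\lambda}$, the transpose in the definition of $\overline{\cdot}$ gives $\lambda=\mu+\epsilon_{n+l}$ (index governed by the column, not the row) with $\mu_{n+l}=r-n-1$. You essentially acknowledge this when you flag the odd-row bookkeeping as the crux, and the paper carries out exactly that computation, so once this is stated correctly your proof goes through verbatim.
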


\begin{proof}
On one hand, $\kappa$ acts on $L(\lambda)$ as the scalar specified in Lemma \ref{weightaction}. On the other hand, since
\begin{align*}
\Delta(\kappa)=\kappa \otimes 1 +1\otimes \kappa +2\gamma,
\end{align*}
Lemma \ref{weightaction} implies that $\kappa \otimes 1$ and $1 \otimes \kappa$ act on $L(\mu)\otimes L(\epsilon_1)$ by $\langle\mu,\mu+2\rho\rangle$ and $ \langle\epsilon_1,\epsilon_1+2\rho \rangle$,  hence $\gamma$ acts as a scalar 
\begin{align*}
2\gamma_{\mu,\epsilon_1}^{\lambda}=\langle\lambda,\lambda+2\rho\rangle-\langle\mu,\mu+2\rho\rangle-\langle\epsilon_1,\epsilon_1+2\rho\rangle.
\end{align*}
This depends on whether the added box is in the even rows or odd rows. Let's discuss by cases:
1) The box $b$ is added to the $l$-th column and $r$-th row to obtain $\overline{\lambda}$, then 
\begin{align*}
\lambda&=\mu+\epsilon_r\\
\mu_r&=l-1
\end{align*}
\begin{align*}
2\gamma_{\mu,\epsilon_1}^{\lambda}&=\langle\mu+\epsilon_r,\mu+\epsilon_r+2\rho\rangle-\langle\mu,\mu+2\rho\rangle-\langle\epsilon_1,\epsilon_1+2\rho\rangle\notag\\
&=\langle\epsilon_r,\epsilon_r+2\rho\rangle-\langle\epsilon_1,\epsilon_1+2\rho\rangle+2\langle\mu,\epsilon_r\rangle\\
&=(-2r+2+n-m)-(n-m)+2(l-1)\\
&=2(l-r)=2c(b).
\end{align*}

2) The added box is in the rows $n+1$ and below of $\overline{\lambda}$.
Let the added box be in the $l$-th column and $r$-th row of $\overline{\lambda}$. In the subdiagram cut out from the odd part, it is in the $(r-n)$-th row and $l$-th column, and after transposing, it becomes the $(r-n)$-th column and $(n+l)$-th row.  Hence
\begin{align*}
\lambda&=\mu+\epsilon_{n+l}\\
\mu_{n+l}&=r-n-1
\end{align*}
\begin{align*}
\gamma^{\lambda}_{\mu,\epsilon_1}&=\langle\mu+\epsilon_{n+l},\mu+\epsilon_{n+l}+2\rho\rangle-\langle\mu,\mu+2\rho\rangle-\langle\epsilon_1,\epsilon_1+2\rho\rangle\notag\\
&=\langle\epsilon_{n+l},\epsilon_{n+l}+2\rho\rangle-\langle\epsilon_1,\epsilon_1+2\rho\rangle+2\langle\mu,\epsilon_{n+l}\rangle\\
&=(2(n+l)-2-3n-m)-(n-m)-2(r-n-1)\\
&=2(l-r)=2c(b).
\end{align*}
\end{proof}

\begin{theorem}
Let $\rho'$ be the action mentioned in Lemma \ref{action}. Then the extra defining relations for $\mathcal{H}^{\operatorname{ext}}_d$ hold in the image of $\rho'$, hence inducing an action 
\begin{align*}
\Psi: \mathcal{H}^{\operatorname{ext}}_d \to \operatorname{End}_{\mathfrak{g}}(L(\alpha)\otimes L(\beta)\otimes V^{\otimes d}).
\end{align*}
\end{theorem}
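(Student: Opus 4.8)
The plan is to show that the five extra relations defining $\mathcal{H}^{\operatorname{ext}}_d$ are satisfied by the operators $\rho'(x_i), \rho'(y_i), \rho'(z_i), \rho'(t_i)$ on $L(\alpha)\otimes L(\beta)\otimes V^{\otimes d}$, so that $\rho'$ descends through the quotient $\mathcal{G}_d \twoheadrightarrow \mathcal{H}^{\operatorname{ext}}_d$. The relations split naturally into two groups: the ``straightening'' relations $x_{i+1}=t_ix_it_i+t_i$ and $y_{i+1}=t_iy_it_i+t_i$, and the quadratic eigenvalue relations $(x_1-a)(x_1+p)=0$ and $(y_1-b)(y_1+q)=0$.

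First I would dispatch the straightening relations. By the identity established in the proof of Theorem~\ref{action}, $x_{i+1}-t_ix_it_i=\gamma_{i,i+1}$ for the action $\rho$, hence the same holds for $\rho'$ (the shift by $\tfrac12\kappa_V$ on each $x_i$ cancels, since $\tfrac12\kappa_V - t_i(\tfrac12\kappa_V)t_i = 0$). So it suffices to check that $\rho'(t_i)$ itself equals $\gamma_{i,i+1}$ as an operator on $V^{\otimes d}$, i.e. that the signed swap $\sigma$ on $V\otimes V$ coincides with the action of $\gamma=\sum_{p,q}(-1)^{\overline q}E_{pq}\otimes E_{qp}$. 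This is a direct computation on basis vectors $e_p\otimes e_q$: $\gamma.(e_p\otimes e_q)=(-1)^{\overline q}(-1)^{\overline q(\overline p+\overline q)}e_q\otimes e_p=(-1)^{\overline p\cdot\overline q}e_q\otimes e_p$ after collecting signs, which is exactly $\sigma.(e_p\otimes e_q)$. Hence $x_{i+1}-t_ix_it_i=t_i$ holds for $\rho'$, and identically for $y$.

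Next, the quadratic relations. Here I would use Lemma~\ref{pierri} together with the decomposition theory of Section~\ref{lr}. The operator $\rho'(x_1)=\gamma_{M,1}$ acts on $M\otimes V = L(\alpha)\otimes V$; since this tensor product is semisimple and decomposes into summands $L(\lambda)$ where $\overline\lambda$ is obtained from $\overline\alpha$ by adding one box, Lemma~\ref{pierri} says $\gamma_{M,1}$ acts on the $L(\lambda)$-summand by the content $c(b)$ of that added box. For $\alpha=\overline{(a^p)}$, the rectangular hook shape $\overline\alpha$ admits exactly two addable boxes (when $p\ne n$; one degenerate case when $p=n$, which still fits): one at the end of the first row with content $a$, and one at the bottom of the first column with content $-p$ (reading off from the explicit description $\overline{(a^p)}=(a^n)\cup((p-n)^a)$ when $p>n$, and $=(a^p)$ when $p\le n$). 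Therefore the eigenvalues of $\rho'(x_1)$ lie in $\{a,-p\}$, which is precisely the statement that $(\rho'(x_1)-a)(\rho'(x_1)+p)=0$. The argument for $\rho'(y_1)$ on $N\otimes V=L(\beta)\otimes V$ is identical with $(a,p)$ replaced by $(b,q)$.

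The main obstacle I anticipate is the bookkeeping of addable-box contents in the two cases $p\le n$ versus $p>n$, and making sure the shift constant in Lemma~\ref{action} produces exactly the eigenvalues $a$ and $-p$ rather than $a+\tfrac12\kappa_V$-type corrections --- in other words, verifying that the shift in $\rho'$ is calibrated so that $\rho'(x_1)$ equals $\gamma_{M,1}$ on the nose (this is why the paper passes to $\rho'$ at all). Once it is confirmed that $\rho'(x_1)=\gamma_{M,1}$, Lemma~\ref{pierri} does all the work; the only remaining care is the edge case $p=n$ (resp.\ $q=n$), where $\overline\alpha$ is a genuine $a\times n$ rectangle with addable boxes of content $a$ and $-n=-p$, so the quadratic relation still holds. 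Having verified all five relations, $\rho'$ factors through $\mathcal{H}^{\operatorname{ext}}_d$ by the universal property of the quotient, giving the desired $\Psi$, and since $\rho'$ already lands in $\operatorname{End}_{\mathfrak g}(L(\alpha)\otimes L(\beta)\otimes V^{\otimes d})$ by Theorem~\ref{action}, so does $\Psi$.
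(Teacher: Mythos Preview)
Your proposal is correct and follows the same overall strategy as the paper: reduce the straightening relations to showing that $\rho'(t_i)=\gamma_{i,i+1}$, and handle the quadratic relations via Lemma~\ref{pierri} applied to the two addable boxes of the rectangle $(a^p)$ (contents $a$ and $-p$). The one point of departure is in how you verify that $\gamma_{i,i+1}$ coincides with the signed swap. You compute this directly on basis vectors $e_p\otimes e_q$, whereas the paper argues by decomposing $V\otimes V$ into its symmetric and antisymmetric summands $L(\,\ydiagram{2}\,)\oplus L(\,\ydiagram{1,1}\,)$, observing that $\gamma_{i,i+1}$ acts on each by the content of the added box ($+1$ and $-1$ respectively), and then checking that the signed swap $\sigma$ has the same eigenspaces. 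Your direct computation is shorter and self-contained; the paper's route has the aesthetic virtue of reusing Lemma~\ref{pierri} one more time and keeping the entire argument phrased uniformly in terms of box contents.
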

\begin{proof}
For short let us write $M=L(\alpha)$ and $N=L(\beta)$. The action of $x_1$ is given by
\begin{align*}
\frac{1}{2}(\kappa_{M,1}-\kappa_{M}-\kappa_V)=\gamma_{M,1}
\end{align*}
and by the previous lemma, $\gamma_{M,1}$ acts by the scalar $c(b)$, where $b$ is the box added to $(a^p)$ to obtain the next diagram. Since this box can only be added to the end of the first row or the bottom of the first column, the content can only be $a$ or $-p$, therefore 
\begin{align*}
(x_1-a)(x_1+p)=0.
\end{align*}
Similarly, 
\begin{align*}
(y_1-b)(y_1+q)=0.
\end{align*}
In the proof of Theorem \ref{action}, we showed that $x_{i+1}-t_ix_it_i=\gamma_{i,i+1}$. Hence in order to check the further relation $x_{i+1}-t_ix_it_i=t_i$ in $\mathcal{H}^{\operatorname{ext}}$, it is enough to check the action of $t_i$ agrees with that of $\gamma_{i,i+1}$. According to the Littlewood-Richardson rule, the decomposition of $V\otimes V$, as the $i$-th and $i+1$-th copy in $M\otimes N\otimes V^{\otimes d}$, is the following
\begin{align*}
L(\begin{ytableau}*(white)\end{ytableau})\otimes L(\begin{ytableau}*(white)\end{ytableau}) \simeq L(\begin{ytableau}*(white)& *(white)\end{ytableau})\oplus L(\begin{ytableau}*(white)\\*(white)\end{ytableau})
\end{align*}
where $\gamma_{i,i+1}$ acts on $L(\begin{ytableau}*(white)& *(white)\end{ytableau})$ by $1$ and acts on $L(\begin{ytableau}*(white)\\*(white)\end{ytableau})$ by $-1$ based on the content of the added box. On the other hand,
\begin{align*}
L(\begin{ytableau}*(white)& *(white)\end{ytableau})&=\mathbb{C}-\operatorname{span}\{v_i\otimes v_j +(-1)^{\overline{i}\cdot \overline{j}}v_j\otimes v_i|1\leq i,j\leq n+m\}\\
L(\begin{ytableau}*(white)\\*(white)\end{ytableau})&=\mathbb{C}-\operatorname{span}\{v_i\otimes v_j -(-1)^{\overline{i}\cdot \overline{j}}v_j\otimes v_i |1\leq i,j\leq n+m \}
\end{align*}
and $s_i$ also acts via eigenvalues $1$ and $-1$, therefore agrees with the action of $\gamma_{i,i+1}$.
\end{proof}

\section{Seminormal Representations}
\subsection{The Bratteli graph and double centralizer theorem}\label{bra}
Let the integers $n,m,a,b,p,q$ be chosen as earlier. Let the associated Bratteli graph $\Gamma$ be the directed graph with vertices $\mathcal{P}=\displaystyle\bigcup_{i=-1}^{\infty}\mathcal{P}_i$. Here, each $\mathcal{P}_i$ is a set of hook Young diagrams defined as follows: $\mathcal{P}_{-1}=(a^p)$, and 
\begin{align*}
\mathcal{P}_0&=\{\lambda\hspace{.1 in}|\hspace{.1 in} L(\overline{\lambda}) \text{ is a summand of }L(\alpha)\otimes L(\beta)\}\\
\mathcal{P}_i&=\{\lambda\hspace{.1 in}|\hspace{.1 in} L(\overline{\lambda}) \text{ is a summand of }L(\alpha)\otimes V\} \hspace{.2 in} i\geq 1
\end{align*}
 Recall from the discussion in Section \ref{young} and \ref{lr} that this implies for $i\geq 1$, $\lambda\in \mathcal{P}_i$ if and only if $\lambda$ is a hook partition and can be obtained by adding a box to some diagram in $\mathcal{P}_{i-1}$. Similarly, there are combinatorial rules determining the set of hood partitions lying in $\mathcal{P}_0$. We will refer to partitions in $\mathcal{P}_i$ as vertices at level $i$.

The edges in the graph are the following: there is a directed edge from $\lambda\in \mathcal{P}_i$ to $\mu \in \mathcal{P}_{i+1}$ if $\lambda$ is contained in $\mu$ for $i\geq 0$. The only vertex in rank $-1$ has edges pointing to all vertices in rank $0$.

In the case $(a^p)=(4^3)$, $(b^q)=(2^2)$, $n=3$, $m=1$, the first few rows of the Bratteli diagram are as the following. The even and odd parts of each partition are indicated using different colors.

\xymatrix{
&&{\begin{ytableau}
*(yellow)&*(yellow)&*(yellow)&*(yellow)\\
*(yellow)&*(yellow)&*(yellow)&*(yellow)\\
*(yellow)&*(yellow)&*(yellow)&*(yellow)
\end{ytableau}} \ar[dl]\ar[d]\ar[dr]&&\\
&{\begin{ytableau}
*(yellow)&*(yellow)&*(yellow)&*(yellow)&*(yellow)&*(yellow)\\
*(yellow)&*(yellow)&*(yellow)&*(yellow)&*(yellow)&*(yellow)\\
*(yellow)&*(yellow)&*(yellow)&*(yellow)
\end{ytableau}}\ar[dl]\ar[d]\ar[dr] 
&{\begin{ytableau}
*(yellow)&*(yellow)&*(yellow)&*(yellow)&*(yellow)&*(yellow)\\
*(yellow)&*(yellow)&*(yellow)&*(yellow)&*(yellow)\\
*(yellow)&*(yellow)&*(yellow)&*(yellow)\\
*(blue)
\end{ytableau}} \ar[d]\ar[dr]
& {\begin{ytableau}
*(yellow)&*(yellow)&*(yellow)&*(yellow)&*(yellow)\\
*(yellow)&*(yellow)&*(yellow)&*(yellow)&*(yellow)\\
*(yellow)&*(yellow)&*(yellow)&*(yellow)\\
*(blue)\\
*(blue)
\end{ytableau}}\ar[d]\ar[dr]&&\\
{\begin{ytableau}
*(yellow)&*(yellow)&*(yellow)&*(yellow)&*(yellow)&*(yellow)&*(yellow)\\
*(yellow)&*(yellow)&*(yellow)&*(yellow)&*(yellow)&*(yellow)\\
*(yellow)&*(yellow)&*(yellow)&*(yellow)
\end{ytableau}} 
&{\begin{ytableau}
*(yellow)&*(yellow)&*(yellow)&*(yellow)&*(yellow)&*(yellow)\\
*(yellow)&*(yellow)&*(yellow)&*(yellow)&*(yellow)&*(yellow)\\
*(yellow)&*(yellow)&*(yellow)&*(yellow)&*(yellow)
\end{ytableau}}
& {\begin{ytableau}
*(yellow)&*(yellow)&*(yellow)&*(yellow)&*(yellow)&*(yellow)\\
*(yellow)&*(yellow)&*(yellow)&*(yellow)&*(yellow)&*(yellow)\\
*(yellow)&*(yellow)&*(yellow)&*(yellow)\\
*(blue)
\end{ytableau}}
&{\begin{ytableau}
*(yellow)&*(yellow)&*(yellow)&*(yellow)&*(yellow)&*(yellow)\\
*(yellow)&*(yellow)&*(yellow)&*(yellow)&*(yellow)\\
*(yellow)&*(yellow)&*(yellow)&*(yellow)\\
*(blue)\\
*(blue)
\end{ytableau}}
 & {\begin{ytableau}
*(yellow)&*(yellow)&*(yellow)&*(yellow)&*(yellow)\\
*(yellow)&*(yellow)&*(yellow)&*(yellow)&*(yellow)\\
*(yellow)&*(yellow)&*(yellow)&*(yellow)\\
*(blue)\\
*(blue)\\
*(blue)
\end{ytableau}} &\dots
}

Notice the following diagram 
\begin{align*}
\lambda={\begin{ytableau}
*(yellow)&*(yellow)&*(yellow)&*(yellow)\\
*(yellow)&*(yellow)&*(yellow)&*(yellow)\\
*(yellow)&*(yellow)&*(yellow)&*(yellow)\\
*(blue)&*(blue)\\
*(blue)&*(blue)
\end{ytableau}}
\end{align*}
does not appear in the second row of the Bratteli graph, even though the (nonhook) Littlewood-Richardson coefficient $c^{\lambda}_{\alpha,\beta}$ is nonzero, because the bottom two rows lie outside the $(3,1)$-hook. 

 

We now discuss summands of $L(\alpha)\otimes L(\beta) \otimes V^{\otimes d}$ as a module for $\mathcal{H}^{\operatorname{ext}}_d$. Define the centralizer algebra $\mathcal{H}=\operatorname{End}_{\mathfrak{gl}(n|m)}(L(\alpha)\otimes L(\beta)\otimes V^{\otimes})$. According to the double centralizer theorem,
\begin{align*}
L(\alpha)\otimes L(\beta) \otimes V^{\otimes d} = \displaystyle \bigoplus_{\lambda\in \mathcal{P}_d} L(\lambda) \otimes \mathcal{L}^{\lambda}
\end{align*}
as $(\mathfrak{gl}(n|m),\mathcal{H})$-bimodules. Moreover, the dimension of $\mathcal{L}^{\lambda}$ as a $\mathbb{C}$-vector space, is equal to the multiplicity of $L(\lambda)$ as a $\mathfrak{gl}(n|m)$-module in the decomposition.

Notice that the action of $\mathfrak{h}$ commutes with $\mathcal{H}$, therefore
\begin{align*}
(L(\lambda) \otimes \mathcal{L}^{\lambda})_{\lambda}&=\{v\in L(\lambda) \otimes \mathcal{L}^{\lambda}\hspace{.1in} |\hspace{.1in} h.v=\lambda(h)v, \forall h\in \mathfrak{h}\}\\
&=(L(\lambda))_{\lambda}\otimes  \mathcal{L}^{\lambda}
\end{align*}
is an $\mathcal{H}$-module. On the other hand, $(L(\lambda))_{\lambda}$ is known to be one-dimensional, spanned by the highest weight vector $v_{\lambda}$ in $L(\lambda)$. Therefore $(L(\lambda) \otimes \mathcal{L}^{\lambda})_{\lambda}$ and $\mathcal{L}^{\lambda}$ have the same dimension, and $\mathcal{L}^{\lambda}$ consists of highest weight vectors from the various copies of $L(\lambda)$ in the decomposition.

We label each highest weight vector $v_{\lambda}$ via the choice of the specific copy of $L(\lambda)$ using the Bratteli graph. For a fixed choice of $d$ and $\lambda\in \mathcal{P}_d$, let $\Gamma^{\lambda}$ be the set of directed paths from $\alpha$ to $\lambda$. For each given path $T\in \Gamma^{\lambda}$, let
\begin{align*}
T=(\alpha,T^{(0)},\dots,T^{(d)}=\lambda)
\end{align*}
be the vertices along the path. We now slightly abuse the notation $\lambda$ for both the partition and the weight, and use $L(\lambda)$ to denote the highest weight $\mathfrak{gl}(n|m)$-module associated to the highest weight $\overline{\lambda}$.

Given a path $T$, one can obtain a copy of $L(\lambda)$ uniquely, via the following:  since the decomposition of $L(\alpha)\otimes L(\beta)$ is multiplicity free, one can choose the unique summand $L({T^{(0)}})$. For $i\geq 1$, choose the summand inductively by choosing $L({{T^{(i)}}})$ uniquely in the decomposition of $L({{T^{(i-1)}}})\otimes V$. Therefore, the highest weight vector $v_{\lambda}$ (up to a choice of rescaling) in a copy $L(\lambda)$ is a linear combination of vectors of the form $w_d\otimes u_d$, with $w_d\in L(T^{(d-1)})$ and $u_d\in V$. Again each $w_d$ is a linear combination of vectors of the form $w_{d-1}\otimes u_{d-1}$, with $w_{d-1}\in L(T^{(d-2)})$ and $u_{d-1}\in V$, etc. Denote this highest weight vector by $v_T$, then $\mathcal{L}^{\lambda}$ admits a basis $\{v_T\}_{T\in \Gamma^{\lambda}}$.

Recall that the algebra $\mathcal{H}^{\operatorname{ext}}_d$ is generated by $z_0,\dots,z_d$, $x_1$, $s_1,\dots,s_{d-1}$.

\begin{theorem}\label{z0}
Fix $\lambda\in \mathcal{P}_d$, let $v_T$ be as defined as above, where $T\in \Gamma^{\lambda}$ is a path in the Bratteli graph $\Gamma$, then
\begin{align*}
z_i.v_T=c(T^{(i)}/T^{(i-1)})v_T.
\end{align*}
Here, $1\leq i\leq d$, $c(T^{(i)}/T^{(i-1)})$ denotes the content of the box added to $T^{(i-1)}$ in order to obtain $T^{(i)}$.
\end{theorem}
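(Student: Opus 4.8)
The plan is to recall the explicit formula for the action of $z_i$ ($1\le i\le d$) coming from Theorem~\ref{action} and Lemma~\ref{action}. From the computations in the proof of Theorem~\ref{action} we have, for $1\le j\le d$,
\begin{align*}
z_j = \kappa_V + \gamma_{M,j}+\gamma_{N,j}+\gamma_{1,j}+\cdots+\gamma_{j-1,j}
\end{align*}
as operators on $M\otimes N\otimes V^{\otimes d}$ under $\rho$, and after the shift of Lemma~\ref{action} the operator $\rho'(z_j)$ equals $\gamma_{M,j}+\gamma_{N,j}+\gamma_{1,j}+\cdots+\gamma_{j-1,j}$, i.e. it is exactly $\gamma$ acting on the $j$-th copy of $V$ together with everything to its left. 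The key observation is that this is precisely $\tfrac12(\kappa_{M\otimes N,j}-\kappa_{M\otimes N,j-1}) - \tfrac12\kappa_V$, which in turn is $\tfrac12(\Delta^{(j+1)}(\kappa)$ acting on $M\otimes N\otimes V^{\otimes j}$ minus $\Delta^{(j)}(\kappa)$ acting on $M\otimes N\otimes V^{\otimes j-1}$, minus $\kappa_V)$.

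First I would fix a path $T=(\alpha,T^{(0)},\dots,T^{(d)}=\lambda)$ and the associated highest weight vector $v_T$, and argue that $v_T$ lies in a specific copy of $L(\overline{T^{(j)}})\otimes V^{\otimes(d-j)}$ inside $L(\alpha)\otimes L(\beta)\otimes V^{\otimes d}$, namely the copy singled out by following the path up to level $j$. This is essentially how $v_T$ was constructed: $w_d\in L(T^{(d-1)})$, $w_d$ a combination of $w_{d-1}\otimes u_{d-1}$ with $w_{d-1}\in L(T^{(d-2)})$, and so on, so that after peeling off the last $d-j$ tensor factors $v_T$ is built from a highest weight vector of the chosen copy $L(\overline{T^{(j)}})$. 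Then $\kappa_{M\otimes N,j}$, being the image of the central element $\Delta^{(j+1)}(\kappa)$ and hence $\mathfrak{gl}(n|m)$-central on $M\otimes N\otimes V^{\otimes j}$, acts on that copy by the scalar $\langle \overline{T^{(j)}},\overline{T^{(j)}}+2\rho\rangle$ by Lemma~\ref{weightaction}, and similarly $\kappa_{M\otimes N,j-1}$ acts by $\langle \overline{T^{(j-1)}},\overline{T^{(j-1)}}+2\rho\rangle$. Therefore
\begin{align*}
\rho'(z_j).v_T = \tfrac12\big(\langle \overline{T^{(j)}},\overline{T^{(j)}}+2\rho\rangle - \langle \overline{T^{(j-1)}},\overline{T^{(j-1)}}+2\rho\rangle - \kappa_V\big)v_T,
\end{align*}
and since $\kappa_V=\langle\epsilon_1,\epsilon_1+2\rho\rangle$ and $\overline{T^{(j)}}=\overline{T^{(j-1)}}+\epsilon_r$ for the appropriate row $r$, the bracket simplifies — by exactly the two case computations in the proof of Lemma~\ref{pierri} — to $c(b)$ where $b=T^{(j)}/T^{(j-1)}$ is the box added. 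This gives $\rho'(z_j).v_T = c(T^{(j)}/T^{(j-1)})v_T$ as claimed.

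The main obstacle I expect is the bookkeeping needed to make rigorous the claim that $v_T$ really sits inside the prescribed copy and that $\kappa_{M\otimes N,j}$ sees it as a highest weight vector of $L(\overline{T^{(j)}})$: one must check that the inductive construction of $v_T$ is compatible with the nested sequence of $\mathfrak{gl}(n|m)$-submodules $L(\overline{T^{(0)}})\subset L(\alpha)\otimes L(\beta)$, $L(\overline{T^{(1)}})\subset L(\overline{T^{(0)}})\otimes V$, \dots, and that applying $\kappa$ to the first $j+1$ tensor slots commutes with peeling off the remaining $V$ factors. Once that is granted, the eigenvalue computation is a direct consequence of Lemmas~\ref{weightaction}, \ref{inner}, and \ref{pierri}, with no genuinely new calculation required; the sign/parity subtleties of the $\mathbb{Z}_2$-grading have already been absorbed into those lemmas. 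I would also remark that, because the decomposition of $L(\alpha)\otimes L(\beta)$ is multiplicity free and each subsequent $L(T^{(i-1)})\otimes V$ is multiplicity free by the hook Pieri rule, the copy chosen by $T$ is unambiguous, so $v_T$ is well defined up to scalar and the statement is independent of that rescaling.
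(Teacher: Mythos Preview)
Your proposal is correct and follows essentially the same route as the paper: express $\rho'(z_i)$ as $\tfrac12(\kappa_{M\otimes N,i}-\kappa_{M\otimes N,i-1}-\kappa_V)$, use the inductive construction of $v_T$ to see that $\kappa_{M\otimes N,j}$ acts by the scalar $\langle \overline{T^{(j)}},\overline{T^{(j)}}+2\rho\rangle$ via Lemma~\ref{weightaction}, and then reduce to the content of the added box. The only cosmetic difference is that the paper redoes the two-case (even row versus odd row) computation explicitly using Lemma~\ref{inner}, whereas you observe this is literally the calculation already carried out in Lemma~\ref{pierri} and cite it; your shortcut is legitimate, though note your phrase ``$\overline{T^{(j)}}=\overline{T^{(j-1)}}+\epsilon_r$ for the appropriate row $r$'' is imprecise in the odd case, where the index is $n+l$ for $l$ the column --- exactly the subtlety that Lemma~\ref{pierri} handles.
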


\begin{proof}
Let us write $M=L(\alpha)$ and $N=L(\beta)$ for short. In particular, for an element $v_T$, by the discussion mentioned earlier, $v_T$ is a linear combination of vectors of the form $v_{T^{(i)}}\otimes w$, with $v_{T^{(i)}}\in L(T^{(i)})$ and $w\in V^{\otimes d-i}$, and $v_{T^{(i)}}$ is again a linear combination of vectors of the form $v_{T^{(i-1)}}\otimes u$ with $v_{T^{(i-1)}}\in L^{(i-1)}$ and $u\in V$.
\begin{align*}
z_i.v_T^{}&=\frac{1}{2}(\kappa_{M\otimes N, i}-\kappa_{M\otimes N, i-1}-\kappa_V).v_T^{}\\
&=\frac{1}{2}\sum \kappa_{M\otimes N, i}.v_{T^{(i)}}\otimes w -\frac{1}{2}\sum \kappa_{M\otimes N, i-1}v_{T^{(i-1)}}\otimes u - \frac{1}{2}\langle\epsilon_1,\epsilon_1+2\rho \rangle v_T^{},
\end{align*}
where the sum is over all the terms in $v_T$. Since the action of $\kappa_V$ on $V$ is $\langle\epsilon_1,\epsilon_1+2\rho \rangle$ according to Lemma \ref{weightaction}. Applying the same lemma to the first two actions on the corresponding vector, the above formula becomes
\begin{align*}
z_i.v_T^{}&=\frac{1}{2}(\langle\lambda', \lambda'+2\rho\rangle-\langle \lambda, \lambda+2\rho\rangle-\langle\epsilon_1,\epsilon_1+2\rho \rangle )v_T^{}
\end{align*}
Where $\lambda$, $\lambda'$ be the weight associated to $T^{(i-1)}$, $T^{(i)}$, i.e. $\overline{\lambda}=T^{(i-1)}$ and $\overline{\lambda'}=T^{(i)}$.

Now we discuss by cases:

1) The diagrams $T^{(i-1)}$ and $T^{(i)}$ differ by a box $b$ in the rows $n+1$ and below in either diagram.
Let the added box be in the $l$-th column and $r$-th row of $T^{(i)}$. In the subdiagram cut out from the odd part, it is in the $(r-n)$-th row, and after transposing, it becomes the $(r-n)$-th column and $(n+l)$-th row.  Hence
\begin{align*}
\lambda'&=\lambda+\epsilon_{n+l}\\
\lambda_{n+l}&=r-n-1
\end{align*}
\begin{align*}
&\langle\lambda', \lambda'+2\rho\rangle-\langle \lambda, \lambda+2\rho\rangle-\langle\epsilon_1,\epsilon_1+2\rho \rangle \\
=&\langle\lambda+\epsilon_{n+l}, \lambda+\epsilon_{n+l}+2\rho\rangle-\langle \lambda, \lambda+2\rho\rangle-\langle\epsilon_1,\epsilon_1+2\rho \rangle \\
=&2 \langle \lambda,\epsilon_{n+l} \rangle + \langle \epsilon_{n+l}, \epsilon_{n+l}+2\rho \rangle - \langle \epsilon_1, \epsilon_1+2\rho \rangle \\
=&-2 (r-n-1) + (2(n+l)-2-3n-m)-(n-m) \text{   (Lemma \ref{inner})}\\
=&-2r+2l=2c(b).
\end{align*}

2) The diagrams $T^{(i-1)}$ and $T^{(i)}$ differ by a box in the rows $n$ and above in either diagram.
This case is similar to the $\mathfrak{gl}_n$ calculation but we'll repeat it here. Let $b$ be the box added to $T^{(i-1)}$ to obtain $T^{(i)}$, and $b$ is in the $l$-th column and $r$-th row of $T^{(i)}$. Since $b$ stays in the even part in the process of $\lambda \mapsto \overline{\lambda}$, it follows that 
\begin{align*}
\lambda'&=\lambda+\epsilon_r\\
\lambda_r&=l-1
\end{align*}
 \begin{align*}
&\langle\lambda', \lambda'+2\rho\rangle-\langle \lambda, \lambda+2\rho\rangle-\langle\epsilon_1,\epsilon_1+2\rho \rangle \\
=&\langle\lambda+\epsilon_{r}, \lambda+\epsilon_{r}+2\rho\rangle-\langle \lambda, \lambda+2\rho\rangle-\langle\epsilon_1,\epsilon_1+2\rho \rangle \\
=&2\langle \lambda,\epsilon_r \rangle + \langle \epsilon_r, \epsilon_r +2\rho \rangle - \langle \epsilon_1 , \epsilon_1 +2\rho \rangle\\
=&2(l-1) + (-2r+2+n-m)-(n-m) \\
=&2(l-r)=2c(b).
 \end{align*} 
\end{proof}

\begin{remark}
Similar to the above construction, one can define a Bratteli graph $\Phi$ using the sequence of decomposition of $M\otimes V^{\otimes d}\otimes N$, then the above fixed partition $\lambda$ also appears at level $d$. Let $\Phi^{\lambda}$ be the set of paths from $\alpha$ to $\lambda$, then the $\mathcal{H}$-module $\mathcal{L}^{\lambda}$ admits a basis $\{v_T^{(y)}\}_{T\in \Phi^{\lambda}}$, with
\begin{align*}
y_i.v_T^{(y)}=c(T^{(i)}/T^{(i-1)})v_T^{(y)}
\end{align*}
for $1\leq i\leq d$.

One can also define a Bratteli graph $\Psi$ using the sequence of decomposition of $N\otimes V^{\otimes d}\otimes M$. Let $\Psi^{\lambda}$ be the set of paths from $\alpha$ to $\lambda$, then the $\mathcal{H}$-module $\mathcal{L}^{\lambda}$ admits a basis $\{v_T^{(z)}\}_{T\in \Phi^{\lambda}}$, with
\begin{align*}
z_i.v_T^{(z)}=c(T^{(i)}/T^{(i-1)})v_T^{(z)}
\end{align*}
for $1\leq i\leq d$.
\end{remark}

\subsection{The action of $z_0$}
The previous secion discusses the action of all polynomial generators except for $z_0$. There are some combinatorial facts about diagrams in $\mathcal{P}_0$. In \cite{Ok,Stan}, the authors gave a combinatorial rule for partitions occuring in $\mathcal{P}_0$.
Based on the above result, Daugherty \cite{D} showed that diagrams in $\mathcal{P}_0$ satisfy the following lemma. We will discuss more on the consequences of the lemma in the next section.
\begin{lemma}{(Lemma 4.13, \cite{D})}\label{P0comb}
When $\lambda$ and $\mu$ are partitions in $\mathcal{P}_0$ that differ by a box, then 
\begin{align*}
c(\text{the distinct box in }\lambda)+c(\text{the distinct box in }\mu)=a-p+b-q
\end{align*}
\end{lemma}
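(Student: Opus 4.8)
The plan is to reduce the statement to a computation about contents of corner boxes in rectangular hook diagrams. Recall that $\mathcal{P}_0$ consists of the hook partitions $\lambda$ such that $L(\overline{\lambda})$ is a summand of $L(\alpha)\otimes L(\beta) = L(\overline{(a^p)})\otimes L(\overline{(b^q)})$, and that by Section~\ref{lr} the multiplicities $\overline{c}^{\lambda}_{(a^p),(b^q)}$ coincide with the ordinary Littlewood--Richardson coefficients $c^{\lambda}_{(a^p),(b^q)}$. Thus membership in $\mathcal{P}_0$ is governed purely by the combinatorics of tensoring two rectangular partitions, for which there is an explicit description due to Okada and Stanley (\cite{Ok,Stan}): every $\lambda$ with $c^{\lambda}_{(a^p),(b^q)}\neq 0$ is obtained from the juxtaposition of the rectangles $(a^p)$ and $(b^q)$ by a ``sliding'' procedure, and the partitions in $\mathcal{P}_0$ that are adjacent in dominance order differ by moving a single box from one diagonal to the neighbouring one.

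First I would make precise the claim that two partitions $\lambda,\mu\in\mathcal{P}_0$ differing by one box are related by removing a box from the end of some row $i$ of $\lambda$ and appending it to the end of row $i'$ of $\mu$ with $i' = i+1$ in the relevant ``staircase'' picture — equivalently, the box $b_\lambda$ distinct to $\lambda$ sits in column $c$, row $r$, and the box $b_\mu$ distinct to $\mu$ sits in column $c-?$, row $r+?$ in a way pinned down by the rectangle-tensor-rectangle rule. Concretely, in the Okada--Stanley description the boundary of any $\lambda\in\mathcal{P}_0$ agrees with the boundary of the juxtaposed rectangles except along one ``moving segment,'' and incrementing along this segment by one box sends a box of content $k$ to a box of content $k' $ with $k+k'$ equal to a fixed constant determined by the two rectangles. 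I would then simply evaluate that constant: the moving segment interpolates between the inner corner of the stacked configuration of $(a^p)$ and $(b^q)$, and a direct content count at the two extreme positions of the segment — the box at the far end of the top rectangle's first row versus the box at the bottom of the stacked second rectangle's first column — yields $a-p$ on one side and $b-q$ on the other, giving $a-p+b-q$.

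The cleanest way to organize this is to invoke Lemma~4.13 of \cite{D} directly: the statement here is literally that lemma, and the only thing to check is that our $\mathcal{P}_0$ coincides with Daugherty's. That is exactly the content of Section~\ref{lr}: the index set of summands of $L(\alpha)\otimes L(\beta)$ for $\mathfrak{gl}(n|m)$, transported through the bijection $\overline{\cdot}$, is the set of hook partitions $\lambda$ with $c^{\lambda}_{(a^p),(b^q)}\neq 0$, which is a subset of the index set of summands of the corresponding $\mathfrak{gl}_N$ tensor product (all of them, once $N$ is large; the hook condition only removes some, and removal does not affect the ``differ by a box'' relation among those that remain, since the constant $a-p+b-q$ is intrinsic). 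So I would (i) recall the $\overline{\cdot}$ bijection and the equality $\overline{c}=c$ from Section~\ref{lr}, (ii) note that the content of a box is unchanged under $\overline{\cdot}$ when the box lies in the even rows, and transforms by the transpose-reflection when it lies in the odd rows, and (iii) verify that in both cases the sum-of-two-contents identity of \cite[Lemma~4.13]{D} is preserved, because a box in odd row $r$, column $c$ of $\lambda$ has content $c-r$, and its image under $\overline{\cdot}$ has content $(r-n)-(n+c) = -(2n) + (r-c)$; wait — here one must be careful, and this sign bookkeeping is the main obstacle.

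\textbf{Expected main obstacle.} The genuine difficulty is exactly the content bookkeeping under the map $\overline{\cdot}$: a box living in the odd part of the hook gets transposed, so its content flips sign and shifts, and one must check that the two distinct boxes of $\lambda$ and $\mu$ either both lie in the even part, both in the odd part, or straddle the boundary, and that in every case the identity $c(b_\lambda)+c(b_\mu)=a-p+b-q$ survives. My expectation is that $\mathcal{P}_0$ only ever produces boxes that differ within a single region (because the partitions involved are built from two rectangles of bounded shape and the moving segment cannot cross the hook boundary without violating the hook condition (H2)), so the transpose-reflection either acts trivially or acts symmetrically on the pair $\{b_\lambda,b_\mu\}$, leaving $c(b_\lambda)+c(b_\mu)$ invariant — but pinning down this ``cannot cross'' assertion, and handling the boundary-straddling case if it does occur, is where the real work lies. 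If that case genuinely arises, one falls back on recomputing the constant directly from the Okada--Stanley rule rather than transporting Daugherty's lemma.
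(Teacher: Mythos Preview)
The paper does not prove this lemma at all; it simply quotes it from \cite[Lemma~4.13]{D} and uses it. So the ``paper's own proof'' is a bare citation, and your instinct to invoke Daugherty's lemma directly is exactly what the paper does.

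Where your proposal goes off the rails is the ``main obstacle'' you identify. You worry about tracking how contents transform under the map $\overline{\cdot}$, splitting into cases according to whether the distinct boxes lie in even or odd rows. But this is a non-issue: the elements of $\mathcal{P}_0$ are \emph{hook Young diagrams} (ordinary partitions), and the content $c(b)$ in the lemma is the ordinary content $l-r$ of a box in that diagram. The map $\overline{\cdot}$ only enters when one translates a partition into a $\mathfrak{gl}(n|m)$-weight in order to compute Casimir eigenvalues; it plays no role in the purely combinatorial statement of this lemma. By Section~\ref{lr} the set $\mathcal{P}_0$ here is literally a subset (the hook-shaped ones) of Daugherty's $\mathcal{P}_0$, with the same content function, so \cite[Lemma~4.13]{D} applies verbatim with no bookkeeping whatsoever.

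In short: your step (i) is all that is needed, and your steps (ii)--(iii) and the entire ``expected obstacle'' paragraph are chasing a difficulty that does not exist.
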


Recall that $\kappa$ is central in $U(\mathfrak{gl}(n|m))$ and acts on any irreducible module $W$ by a scalar. Denote this scalar by $\kappa_W$.
\begin{lemma}\label{transfer}
If $L(\lambda)$ and $L(\mu)$ are irreducible direct summands in $M\otimes N$, where $\overline{\lambda}$ and $\overline{\mu}$ differ by a box, then
\begin{align*}
\kappa_{L(\lambda)}-\kappa_{L(\mu)}=2c(\text{the distinct box in }\overline{\lambda})-2c(\text{the distinct box in }\overline{\mu})
 \end{align*}
Moreover, this value is equal to
\begin{align*}
4(c(b)-\frac{1}{2}(a-p+b-q))
\end{align*}
Where $b$ denotes the distinct box in $\overline{\lambda}$.
\end{lemma}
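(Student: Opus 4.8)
The plan is to combine the scalar action of the Casimir from Lemma~\ref{weightaction} with the content computation carried out in Lemma~\ref{pierri}, and then to quote the combinatorial identity of Lemma~\ref{P0comb}.

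By Lemma~\ref{weightaction}, $\kappa_{L(\lambda)}=\langle\lambda,\lambda+2\rho\rangle$ and $\kappa_{L(\mu)}=\langle\mu,\mu+2\rho\rangle$. Since $L(\lambda)$ and $L(\mu)$ are summands of $L(\alpha)\otimes L(\beta)$, the weights $\lambda$ and $\mu$ are polynomial of the same degree, so the hook diagrams $\overline{\lambda}$ and $\overline{\mu}$ have the same number of boxes; together with the hypothesis that $\overline{\lambda}$ and $\overline{\mu}$ differ by a box, this forces their componentwise minimum $\overline{\nu}:=\overline{\lambda}\cap\overline{\mu}$ to be a hook diagram with $\overline{\lambda}=\overline{\nu}+b_\lambda$ and $\overline{\mu}=\overline{\nu}+b_\mu$, where $b_\lambda$ and $b_\mu$ are the distinct boxes of $\overline{\lambda}$ and $\overline{\mu}$. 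Let $\nu$ be the polynomial weight whose hook diagram is $\overline{\nu}$. By the Littlewood--Richardson rule for hook Schur functions recorded in Section~\ref{lr}, both $L(\lambda)$ and $L(\mu)$ occur as summands of $L(\nu)\otimes V$. Then Lemma~\ref{pierri}, combined with $\Delta(\kappa)=\kappa\otimes1+1\otimes\kappa+2\gamma$ and Lemma~\ref{weightaction}, gives
\[
\langle\lambda,\lambda+2\rho\rangle-\langle\nu,\nu+2\rho\rangle-\langle\epsilon_1,\epsilon_1+2\rho\rangle=2c(b_\lambda),
\]
and the same identity with $\lambda,b_\lambda$ replaced by $\mu,b_\mu$; here $c(b_\lambda)$ and $c(b_\mu)$ are the contents of the distinct boxes measured inside $\overline{\lambda}$ and $\overline{\mu}$. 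Subtracting the two identities, the common terms $\langle\nu,\nu+2\rho\rangle$ and $\langle\epsilon_1,\epsilon_1+2\rho\rangle$ cancel and we obtain $\kappa_{L(\lambda)}-\kappa_{L(\mu)}=2c(b_\lambda)-2c(b_\mu)$, the first assertion. Note that Lemma~\ref{pierri} already treats both the case where the added box lies in the even rows and the case where it lies in the odd rows, so no further case analysis is needed here.

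For the second assertion, $\overline{\lambda}$ and $\overline{\mu}$ lie in $\mathcal{P}_0$ and differ by a box, so Lemma~\ref{P0comb} gives $c(b_\lambda)+c(b_\mu)=a-p+b-q$. Writing $b=b_\lambda$ and substituting $c(b_\mu)=(a-p+b-q)-c(b)$ into $2c(b_\lambda)-2c(b_\mu)$ turns it into $4c(b)-2(a-p+b-q)=4\bigl(c(b)-\tfrac{1}{2}(a-p+b-q)\bigr)$, which is the claimed value.

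The steps requiring the most care are all of a bookkeeping nature: checking that removing a box from a hook diagram again yields a hook diagram, so that $\nu$ is a well-defined polynomial weight; verifying that $L(\lambda)$ and $L(\mu)$ genuinely occur in $L(\nu)\otimes V$, which rests on the coincidence of hook and ordinary Littlewood--Richardson coefficients from Section~\ref{lr}; and keeping track of the fact that the contents appearing in Lemmas~\ref{pierri} and~\ref{P0comb}, and in the statement above, are all the content of the distinct box measured inside the larger hook diagram. I do not anticipate a substantive obstacle beyond this.
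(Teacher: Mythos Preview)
Your proof is correct and takes a somewhat different, more economical route than the paper. Both arguments introduce the common subdiagram $\overline{\nu}=\overline{\lambda}\cap\overline{\mu}$ (the paper calls it $\overline{\gamma}$), but the paper then computes $\langle\lambda,\lambda+2\rho\rangle-\langle\mu,\mu+2\rho\rangle$ from scratch via a four-case analysis according to whether each of the two distinct boxes lies in an even or an odd row, using Lemma~\ref{inner} directly. You instead observe that Lemma~\ref{pierri} has already carried out exactly this computation for a single added box, so you invoke it twice (once for $\lambda$ over $\nu$, once for $\mu$ over $\nu$) and subtract, avoiding any new case analysis. The price you pay is the mild bookkeeping you flag at the end---checking that $\overline{\nu}$ is still a hook diagram and that $L(\lambda),L(\mu)$ genuinely occur in $L(\nu)\otimes V$---but these are immediate from the hook Pieri rule discussed in Section~\ref{lr}. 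For the second assertion both proofs simply quote Lemma~\ref{P0comb}. Your approach is cleaner; the paper's has the minor virtue of being self-contained and not relying on the representation-theoretic framing of Lemma~\ref{pierri}.
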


\begin{proof}
The second part of the claim follows directly from Lemma \ref{P0comb}.

Now let us prove the first part of the claim. It contains a few cases.\\

1) The distinct box $(l,r)$ in $\overline{\mu}$ is in an even row, and the distinct box $(l',r')$ in $\overline{\lambda}$ is in an odd row, where $l$, $l'$ denote the columns and $r$, $r'$ denote the rows of the boxes. Let $\overline{\gamma}$ be the largest diagram that is common in both $\lambda$ and $\mu$. Based on the discussion in Lemma \ref{pierri},
\begin{align*}
\mu&=\gamma+\epsilon_r\\
\gamma_r&=l-1\\
\lambda&=\gamma+\epsilon_{l'+n}\\
\gamma_{l'+n}&=r'-n-1
\end{align*}
\begin{align*}
\kappa_{\lambda}-\kappa_{\mu}&=\langle\lambda,\lambda+2\rho\rangle-\langle\mu,\mu+2\rho\rangle\\
=&\langle\gamma+\epsilon_{l'+n},\gamma+\epsilon_{l'+n}+2\rho\rangle-\langle\gamma+\epsilon_r,\gamma+\epsilon_r+2\rho\rangle\\
=&\langle\epsilon_{l'+n},\epsilon_{l'+n}+2\rho\rangle+2\langle\gamma,\epsilon_{l'+n}\rangle-\langle\epsilon_r,\epsilon_r+2\rho\rangle-2\langle\gamma,\epsilon_r\rangle\\
=&-3n-m+2(l'+n)-2-2(r'-n-1)-\\
&(-2r+2+n-m)-2(l-1)\\
=&2(l'-r')-2(l-r).
\end{align*}

2) The distinct box $(l,r)$ in $\overline{\mu}$ is in an odd row, and the distinct box $(l',r')$ in $\overline{\lambda}$ is in an even row. This case can be seen by switching the roles of $\lambda$ and $\mu$ in the above argument.\\

3) The distinct box $(l,r)$ in $\overline{\mu}$ is in an even row, and the distinct box $(l',r')$ in $\overline{\lambda}$ is in an even row. 
\begin{align*}
\mu&=\gamma+\epsilon_r\\
\gamma_r&=l-1\\
\lambda&=\gamma+\epsilon_{r'}\\
\gamma_{r'}&=l'-1
\end{align*}
\begin{align*}
\kappa_{\lambda}-\kappa_{\mu}&=\langle\epsilon_{r'},\epsilon_{r'}+2\rho\rangle+2\langle\gamma,\epsilon_{r'}\rangle-\langle\epsilon_r,\epsilon_r+2\rho\rangle-2\langle\gamma,\epsilon_r\rangle\\
&=(-2r'+2+n-m)+2(l'-1)-(-2r+2+n-m)-2(l-1)\\
&=2(l'-r')-2(l-r).
\end{align*}

4) The distinct box $(l,r)$ in $\overline{\mu}$ is in an odd row, and the distinct box $(l',r')$ in $\overline{\lambda}$ is in an odd row. 
\begin{align*}
\mu&=\gamma+\epsilon_{l+n}\\
\gamma_{l+n}&=r-n-1\\
\lambda&=\gamma+\epsilon_{l'+n}\\
\gamma_{l'+n}&=r'-n-1.
\end{align*}
\begin{align*}
\kappa_{\lambda}-\kappa_{\mu}=&\langle\epsilon_{l'+n},\epsilon_{l'+n}+2\rho\rangle+2\langle\gamma,\epsilon_{l'+n}\rangle-\\
&\langle\epsilon_{l+n},\epsilon_{l+n}+2\rho\rangle-2\langle\gamma,\epsilon_{l+n}\rangle\\
=&-3n-m+2(l'+n)-2-2(r'-n-1)-\\
&(-3n-m+2(l+n)-2)-2(r-n-1)\\
=&2(l'-r')-2(l-r).
\end{align*}

\end{proof}

\begin{lemma}\label{advinner}
Let $\phi=\epsilon_1+\cdots+\epsilon_t$, where $t\leq n$ be a weight, then 
\begin{align*}
\langle \phi_t,\phi_t+2\rho \rangle =(-t+1+n-m)t
\end{align*} 
Let $\psi_s=\epsilon_{n+1}+\cdots+\epsilon_{n+s}$, where $s \leq m$, then 
\begin{align*}
\langle \psi_s,\psi_s+2\rho \rangle = (n+m-2)s
\end{align*}
Moreover, for $u\in \mathbb{Z}$,
\begin{align*}
\langle u\phi_t, u\phi_t+2\rho \rangle &=ut(-t+n-m+u)\\
\langle u\psi_s,u\psi_s+2\rho \rangle &= us(s-n-m-u)
\end{align*}
\end{lemma}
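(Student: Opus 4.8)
The plan is to reduce every assertion to Lemma~\ref{inner}, the definition $\langle\epsilon_i,\epsilon_j\rangle=(-1)^{\overline{i}}\delta_{ij}$ of the bilinear form, and bilinearity; nothing else is needed. The whole lemma is really one finite arithmetic computation done twice, plus a formal rescaling argument.

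First I would treat $\phi_t$. Bilinearity gives $\langle\phi_t,\phi_t+2\rho\rangle=\langle\phi_t,\phi_t\rangle+\langle\phi_t,2\rho\rangle$, and since the off-diagonal pairings $\langle\epsilon_i,\epsilon_j\rangle$ vanish, both summands split as sums over $i=1,\dots,t$: $\langle\phi_t,\phi_t\rangle=\sum_{i=1}^{t}\langle\epsilon_i,\epsilon_i\rangle$ and $\langle\phi_t,2\rho\rangle=\sum_{i=1}^{t}\langle\epsilon_i,2\rho\rangle$. Because $t\le n$, every index is even, so $\langle\epsilon_i,\epsilon_i\rangle=1$ and the first sum is just $t$. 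For the second, I rewrite $\langle\epsilon_i,2\rho\rangle=\langle\epsilon_i,\epsilon_i+2\rho\rangle-\langle\epsilon_i,\epsilon_i\rangle=(-2i+2+n-m)-1$ by Lemma~\ref{inner}, and sum the resulting arithmetic progression using $\sum_{i=1}^{t}2i=t(t+1)$ to get $\langle\phi_t,2\rho\rangle=t(-t+n-m)$. Adding the two pieces yields $t(-t+1+n-m)$, as claimed.

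The computation for $\psi_s$ is entirely parallel, now using the second branch of Lemma~\ref{inner}: writing $i=n+l$ with $1\le l\le s\le m$, each diagonal pairing is $\langle\epsilon_{n+l},\epsilon_{n+l}\rangle=-1$, so $\langle\psi_s,\psi_s\rangle=-s$; and $\langle\epsilon_{n+l},2\rho\rangle=\langle\epsilon_{n+l},\epsilon_{n+l}+2\rho\rangle-(-1)=(2(n+l)-2-3n-m)+1=2l-n-m-1$, whose sum over $l=1,\dots,s$ is $s(s-n-m)$. Hence $\langle\psi_s,\psi_s+2\rho\rangle=-s+s(s-n-m)=s(s-n-m-1)$.

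Finally, the scaled statements are purely formal consequences of the intermediate data assembled above: bilinearity gives $\langle u\phi_t,u\phi_t+2\rho\rangle=u^2\langle\phi_t,\phi_t\rangle+u\langle\phi_t,2\rho\rangle=u^2t+ut(-t+n-m)=ut(-t+n-m+u)$, and likewise $\langle u\psi_s,u\psi_s+2\rho\rangle=-u^2s+us(s-n-m)=us(s-n-m-u)$. The only thing that needs any attention is keeping the two index ranges of Lemma~\ref{inner} straight and remembering to subtract $\langle\epsilon_i,\epsilon_i\rangle=\pm 1$ when passing from $\langle\epsilon_i,\epsilon_i+2\rho\rangle$ to $\langle\epsilon_i,2\rho\rangle$; beyond that bookkeeping there is no genuine obstacle.
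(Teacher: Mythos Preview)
Your argument is correct and is exactly the ``straightforward calculation'' the paper alludes to: expand bilinearly, feed in Lemma~\ref{inner} and $\langle\epsilon_i,\epsilon_i\rangle=(-1)^{\overline i}$, and sum the resulting arithmetic progression. One remark: your computation gives $\langle\psi_s,\psi_s+2\rho\rangle=s(s-n-m-1)$, which disagrees with the displayed formula $(n+m-2)s$ in the statement but is exactly the $u=1$ specialization of the ``Moreover'' formula $us(s-n-m-u)$, so the discrepancy is a typo in the statement rather than an error in your proof.
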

\begin{proof}
This is a straightforward calculation.
\end{proof}

\begin{lemma}\label{tomato}
For a partition $\lambda\in \mathcal{P}_0$, let $\mathfrak{B}$ be the set of boxes in the rows $p+1$ and below, $\mathfrak{C}$ be the set of boxes in the columns $a+1$ and beyond. Then
\begin{align*}
\displaystyle\sum_{b\in\mathfrak{C}}(2c(b)-(a-p+b-q))=\displaystyle\sum_{b\in\mathfrak{B}}(2c(b)-(a-p+b-q))+qb(a+p).
\end{align*}
\end{lemma}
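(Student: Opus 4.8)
The plan is to reduce the identity to a short computation with sums of box-contents, using the explicit description of $\mathcal{P}_0$; throughout I write $g(b)=2c(b)-(a-p+b-q)$ for a box $b$, the bare $b$ in the expression $a-p+b-q$ being the fixed integer (not a box). Since the hook condition (H2) plays no role in the asserted equality, it is harmless to prove it for every $\lambda$ with $c^{\lambda}_{(a^p),(b^q)}\neq 0$. A box of such a $\lambda$ lies in $\mathfrak{B}\cap\mathfrak{C}$ exactly when it sits in a row $\geq p+1$ and a column $\geq a+1$, and it then contributes equally to $\sum_{\mathfrak{B}}g$ and $\sum_{\mathfrak{C}}g$; so it suffices to prove $\sum_{b\in\mathfrak{C}\setminus\mathfrak{B}}g(b)-\sum_{b\in\mathfrak{B}\setminus\mathfrak{C}}g(b)=qb(a+p)$, where $\mathfrak{C}\setminus\mathfrak{B}$ is the set of boxes of $\lambda$ in rows $\leq p$ and columns $\geq a+1$, and $\mathfrak{B}\setminus\mathfrak{C}$ the set in rows $\geq p+1$ and columns $\leq a$ (the boxes in rows $\leq p$, columns $\leq a$ are always present, since $c^{\lambda}_{(a^p),(b^q)}\neq 0$ forces $(a^p)\subseteq\lambda$, and they appear in neither sum).

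Next I would invoke the combinatorial rule of \cite{Ok}, \cite{Stan} for the product of two rectangles. Transposing $\lambda$ — which, together with the relabelling $(a,b,p,q)\mapsto(p,q,a,b)$, preserves both sides of the equality — lets one assume $a\geq b$; there is then a small case division on $p$ versus $q$ (and on the mirror configuration $a<b,\ p<q$), and I will carry out the representative case $p\geq q\geq 1,\ a\geq b$, the others being entirely parallel. In this case $\mathcal{P}_0$ consists of the partitions $\lambda(\nu)=(a+\nu_1,\dots,a+\nu_q,\;a^{\,p-q},\;b-\nu_q,\dots,b-\nu_1)$ with $\nu$ a partition inside the $q\times b$ rectangle. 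For such a $\lambda$ the boxes in columns $\geq a+1$ are exactly the Young diagram of $\nu$ shifted $a$ columns to the right, and the boxes in rows $\geq p+1$ are exactly the complement $\nu^{c}=(b-\nu_q,\dots,b-\nu_1)$ of $\nu$ inside that rectangle, shifted $p$ rows down; moreover the box $(i,a+j)$ of $\lambda$ has content $a+c_{\nu}(i,j)$ and the box $(p+k,\ell)$ of $\lambda$ has content $-p+c_{\nu^{c}}(k,\ell)$.

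The computation then goes: separating off the linear term, $\sum_{\mathfrak{C}\setminus\mathfrak{B}}g-\sum_{\mathfrak{B}\setminus\mathfrak{C}}g=2\bigl(\sum_{\mathfrak{C}\setminus\mathfrak{B}}c-\sum_{\mathfrak{B}\setminus\mathfrak{C}}c\bigr)-(a-p+b-q)\bigl(|\mathfrak{C}\setminus\mathfrak{B}|-|\mathfrak{B}\setminus\mathfrak{C}|\bigr)$, and the data above give $\sum_{\mathfrak{C}\setminus\mathfrak{B}}c=a|\nu|+\Sigma(\nu)$, $\sum_{\mathfrak{B}\setminus\mathfrak{C}}c=\Sigma(\nu^{c})-p|\nu^{c}|$, $|\mathfrak{C}\setminus\mathfrak{B}|=|\nu|$ and $|\mathfrak{B}\setminus\mathfrak{C}|=|\nu^{c}|=qb-|\nu|$, where $\Sigma$ denotes the sum of contents of a diagram. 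The one remaining input is the relation $\Sigma(\nu)-\Sigma(\nu^{c})=(b-q)\bigl(|\nu|-\tfrac12 qb\bigr)$, which follows because $\nu$ and the $180^{\circ}$-rotation of $\nu^{c}$ tile the $q\times b$ rectangle, together with the elementary identity $\Sigma(q\times b\text{ rectangle})=\tfrac12 qb(b-q)$ — the same flavour of rectangular evaluation recorded in Lemma \ref{advinner}. Substituting everything in, every term involving $|\nu|$ or $\Sigma(\nu)$ cancels, and the remainder collapses to $qb(a+p)$.

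I expect the only real difficulty to be organizational: correctly identifying $\mathfrak{B}\cap\mathfrak{C}$, keeping the content shifts $a+c_\nu$ and $-p+c_{\nu^{c}}$ straight, and handling the parameter cases in the two-rectangle rule (in particular $a<b$, where the bottom rows of $\lambda$ overhang column $a$ and must be split between $\mathfrak{B}\setminus\mathfrak{C}$ and $\mathfrak{B}\cap\mathfrak{C}$). A conceptually cleaner, logically equivalent route makes the cancellation transparent: set $F(\lambda)=\sum_{\mathfrak{C}}g-\sum_{\mathfrak{B}}g$; the $\nu$-parametrization exhibits $\mathcal{P}_0$ as connected under elementary moves that change $\nu$ by one box, and each such move deletes a box of $\mathfrak{B}\setminus\mathfrak{C}$ and inserts a box of $\mathfrak{C}\setminus\mathfrak{B}$ whose contents sum to $a-p+b-q$ by Lemma \ref{P0comb}, so $g$ is negated across the move and $F$ is unchanged; hence $F$ is constant on $\mathcal{P}_0$ and equals its value at $\lambda=(a^{p},b^{q})$, where $\mathfrak{C}=\emptyset$ and $\mathfrak{B}$ is the rectangle $(b^{q})$ pushed $p$ rows down, so a single arithmetic sum gives $qb(a+p)$. (For this route one must additionally check that the elementary moves preserve (H2), which is a short verification.)
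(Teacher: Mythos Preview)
Your argument is correct. The paper records only ``This is a straightforward calculation'' for this lemma, so you have supplied precisely what the paper omits. The direct computation via the Okada--Stanley parametrization checks out: the content identity $\Sigma(\nu)-\Sigma(\nu^{c})=(b-q)\bigl(|\nu|-\tfrac12 qb\bigr)$ is derived correctly, and substituting it into $2(\sum_{\mathfrak C}c-\sum_{\mathfrak B}c)-(a-p+b-q)(|\mathfrak C|-|\mathfrak B|)$ does collapse to $qb(a+p)$; the transposition symmetry you invoke for the case reduction is also valid (both $g$ and the roles of $\mathfrak B,\mathfrak C$ negate, while $qb(a+p)$ is fixed).

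Two minor remarks. First, your opening observation that condition (H2) is irrelevant to the identity already lets you run the alternative ``box-moving'' argument in the full Littlewood--Richardson set, so the final parenthetical about preserving (H2) is unnecessary. Second, that alternative route---showing $F(\lambda)=\sum_{\mathfrak C}g-\sum_{\mathfrak B}g$ is constant along chains in $\mathcal P_0$ via Lemma~\ref{P0comb} and then evaluating at an extreme shape---is in fact the mechanism the paper uses immediately afterward in the proof of the theorem on the action of $z_0$ (there phrased through Lemma~\ref{transfer}), so it meshes more naturally with the surrounding text than the explicit $\nu$-computation.
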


\begin{proof}
This is a straighforward calculation.
\end{proof}

The action of $z_0$ is as follows

\begin{theorem}
Let $T$ be a path $T=(T^{(0)},\dots,T^{(d)}=\lambda)$ in the Bratteli diagram, $\mathfrak{B}$ be the set of boxes in the rows $p+1$ and below in $T^{(0)}$, $v_T$ be the basis element in $\mathcal{L}^{\lambda}$ as defined prior to Theorem~\ref{z0}. 
\begin{align*}
z_0. v_T=
(qab+\displaystyle\sum_{b\in\mathfrak{B}}(2c(b)-(a-p+b-q)))v_T.
\end{align*}
\end{theorem}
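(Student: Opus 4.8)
The plan is to reduce the statement to a purely combinatorial identity for the Casimir eigenvalues and then establish that identity, the main tool being Lemma~\ref{transfer}.

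First I would unwind the action. Under $\rho$, and hence under the shifted action $\rho'$ of Lemma~\ref{action} (which leaves $z_0$ unchanged), we have $z_0\mapsto \tfrac12(\kappa_{M\otimes N}-\kappa_M-\kappa_N)=\gamma_{M,N}$, an operator supported on the $M\otimes N=L(\alpha)\otimes L(\beta)$ tensor factor. By the construction preceding Theorem~\ref{z0}, the vector $v_T$ lies in the copy of $L(\overline{T^{(0)}})\otimes V^{\otimes d}$ inside $L(\alpha)\otimes L(\beta)\otimes V^{\otimes d}$ that is singled out by the path $T$. Since $\kappa$ is central and $L(\overline{T^{(0)}})$ is irreducible, Lemma~\ref{weightaction} gives
\begin{align*}
z_0.v_T=\tfrac12\bigl(\kappa_{L(\overline{\mu})}-\kappa_{L(\alpha)}-\kappa_{L(\beta)}\bigr)v_T
=\tfrac12\bigl(\langle\overline\mu,\overline\mu+2\rho\rangle-\langle\alpha,\alpha+2\rho\rangle-\langle\beta,\beta+2\rho\rangle\bigr)v_T,
\end{align*}
where $\mu=T^{(0)}\in\mathcal{P}_0$. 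Thus it suffices to prove, for every $\mu\in\mathcal{P}_0$, the identity $g(\mu)=h(\mu)$, where $g(\mu)$ denotes the scalar above and $h(\mu)=qab+\sum_{b\in\mathfrak{B}(\mu)}(2c(b)-(a-p+b-q))$, with $\mathfrak{B}(\mu)$ the set of boxes of the Young diagram $\mu$ in rows $p+1$ and below.

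I would prove $g=h$ by induction over $\mathcal{P}_0$. The key input is that the graph on $\mathcal{P}_0$ whose edges join partitions differing by a single box is connected; this follows from the combinatorial description of the Littlewood--Richardson support of a product of two rectangles (\cite{Ok,Stan}, as used in \cite{D}), together with the fact that the hook condition (H2) carves out a connected lower portion of it. For the inductive step, if $\lambda,\mu\in\mathcal{P}_0$ are joined by an edge, then Lemma~\ref{transfer} yields
\begin{align*}
g(\lambda)-g(\mu)=\tfrac12\bigl(\kappa_{L(\overline\lambda)}-\kappa_{L(\overline\mu)}\bigr)=2c(b_{\overline\lambda})-(a-p+b-q),
\end{align*}
where $b_{\overline\lambda}$ is the box distinguishing $\overline\lambda$ from $\overline\mu$ (the first equality of Lemma~\ref{transfer} supplies $2c(b_{\overline\lambda})-2c(b_{\overline\mu})$, and the ``moreover'' clause, i.e.\ Lemma~\ref{P0comb}, rewrites it as displayed). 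One then checks $h(\lambda)-h(\mu)$ equals the same quantity by tracking how $\mathfrak{B}$ changes along the edge: the edge replaces the distinct box of $\mu$ by the distinct box of $\lambda$, and one argues case by case, according to whether each distinct box sits in rows $\le n$, in rows $n+1,\dots,p$, or in rows $p+1$ and below, keeping careful account of the transpose built into $\nu\mapsto\overline\nu$ (which negates the contents of boxes lying in the odd rows). For the base case I would fix a convenient vertex $\mu_0\in\mathcal{P}_0$ for which $\overline{\mu_0}$, $\alpha=\overline{(a^p)}$ and $\beta=\overline{(b^q)}$ are each unions of a bounded number of rectangles, so that $g(\mu_0)$ is computed directly from Lemmas~\ref{inner} and \ref{advinner} while $h(\mu_0)$ is a sum of linear terms over the explicit set $\mathfrak{B}(\mu_0)$; both reduce to the same polynomial in $a,b,p,q$. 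When $\mathcal{P}_0$ contains a partition with at most $p$ rows (e.g.\ $((a+b)^{q}\cup a^{p-q})$ when it satisfies (H2)) one may take $\mathfrak{B}(\mu_0)=\varnothing$, so that $g(\mu_0)=qab$ must be checked; Lemma~\ref{tomato} reconciles the computation when it is more natural to describe $\mu_0$ through its columns beyond $a$ rather than its rows beyond $p$, and also matches the two bookkeepings coming from the symmetry of $z_0$ in $M$ and $N$.

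The main obstacle is the inductive-step bookkeeping: confirming that moving the distinct box and passing through the transpose in $\nu\mapsto\overline\nu$ alters $h$ by exactly the amount that Lemma~\ref{transfer} predicts, and in particular handling edges whose two distinct boxes both lie in rows $\le p$, for which $\mathfrak{B}$ is unchanged and one must use the structure of $\mathcal{P}_0$ to force $c(b_{\overline\lambda})=c(b_{\overline\mu})$. An alternative that sidesteps the connectivity and edge analysis is the purely direct route: use the explicit parametrization of $\mathcal{P}_0$ from \cite{Ok,Stan}, express $\overline\mu$ in terms of that data, evaluate $g(\mu)$ via Lemma~\ref{advinner} and $h(\mu)$ via Lemma~\ref{tomato}, and compare; this trades the combinatorial case analysis for a longer but self-contained computation. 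Either way, the genuine content beyond Theorem~\ref{z0} is organizing the Littlewood--Richardson combinatorics of two rectangles inside the $(n,m)$-hook, which is exactly what Lemmas~\ref{P0comb}, \ref{transfer}, \ref{advinner} and \ref{tomato} have been set up to supply.
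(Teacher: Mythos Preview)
Your proposal is correct and follows essentially the same route as the paper: reduce $z_0$ to the scalar $\tfrac12(\kappa_{L(\overline{T^{(0)}})}-\kappa_{L(\alpha)}-\kappa_{L(\beta)})$, anchor at a convenient partition in $\mathcal{P}_0$, and propagate to $T^{(0)}$ via Lemma~\ref{transfer}, using Lemmas~\ref{advinner} and \ref{tomato} for the anchor computation. The only organizational difference is that the paper does not run a general induction over the adjacency graph of $\mathcal{P}_0$; instead it performs a case split on whether $n<p$, $p\le n<p+q$, or $n\ge p+q$, in each case choosing an explicit base point ($(a^p)\cup(b^q)$ in the first two cases, $(a^p)+(b^q)$ in the third) that is guaranteed to lie in the hook, and then telescopes along a \emph{specific} chain of box moves that always transfers a box between rows $>p$ and columns $>a$. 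This sidesteps entirely the edge case you flag as the ``main obstacle'' (both distinct boxes in rows $\le p$), so no appeal to extra structural facts about $\mathcal{P}_0$ is needed; by contrast, your inductive framing would require either ruling such edges out or verifying $g=h$ along them separately.
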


\begin{proof}
Since the partition $(a^p)$ is a hook shape, either $n \geq p$ or $m\geq a$. Let us discuss by cases.

1) If $n<p$, then $m \geq a$. Therefore, the partition $(a^p)\cup (b^q)$ by pasting the rectangle $(b^q)$ right underneath $(a^p)$, is a hook shape and belongs to the set $\mathcal{P}_0$. Since $T^{(0)}\in \mathcal{P}$, $T^{(0)}$ can be viewed as the result of successively removing boxes from the lower rectangle in $(a^p)\cup (b^q)$, and adding them inside the rectangle to the right of $(a^p)$, to the spot that is the result of rotating the rectangle by $180$ degrees. Let ($(a^p)\cup (b^q)=\overline{\lambda^{(0)}},\overline{\lambda^{(1)}},\dots,\overline{\lambda^{(s-1)}},\overline{\lambda^{(s)}}=T^{(0)})$ be a sequence of the resulting diagrams from successively moving boxes in this fashion. Then by Lemma \ref{transfer},
\begin{align*}
\kappa_{\lambda^{(i)}}-\kappa_{\lambda^{(i-1)}}=4c(b)-2(a-p+b-q)
\end{align*}
where $b$ is the distinct box in $\lambda^{(i)}$.
Let $\mathfrak{C}$ be the set of boxes in $T^{(0)}$ in the columns $a+1$ and beyond. 
\begin{align*}
\displaystyle\sum_{i=1}^{s}(\kappa_{\lambda^{(i)}}-\kappa_{\lambda^{(i-1)}})=\displaystyle \sum_{b\in \mathfrak{C}}4c(b)-2(a-p+b-q)
\end{align*}
In other words, let $\lambda$ be the weight associated to the partition $T^{(0)}$, $\alpha\cup\beta$ be the weight associated to the partition $(a^p)\cup (b^q)$,
\begin{align*}
\kappa_{\lambda}-\kappa_{\alpha\cup\beta}=\sum_{b\in \mathfrak{C}}4c(b)-2(a-p+b-q)
\end{align*}
On the other hand, the action of $z_0$ is given by $\frac{1}{2}(\kappa_{M\otimes N}-\kappa_M-\kappa_N=\gamma_{M,N}$, and since $v_T\in L(\lambda)\otimes V^{\otimes d}$ (see the discussion before Theorem \ref{z0},) the eigenvalue of $z_0$ acting on $v_T$ is the eigenvalue of $\gamma_{M,N}$ acting on $L(\lambda)$, and by an argument similar to that in Lemma \ref{pierri}, this scalar is equal to 
\begin{align*}
\frac{1}{2}(\kappa_{\lambda}-\kappa_{\alpha}-\kappa_{\beta})
\end{align*} 
Hence the eigenvalue for $z_0$ is 
\begin{align*}
\frac{1}{2}(\kappa_{\alpha\cup\beta}-\kappa_{\alpha}-\kappa_{\beta})+\sum_{b\in \mathfrak{C}}(2c(b)-(a-p+b-q))
\end{align*}
To calculate $\kappa_{\alpha\cup\beta}$, $\kappa_{\alpha}$, $\kappa_{\beta}$, define $\phi_t=\epsilon_1+\cdots+\epsilon_t$, $t\leq n$, $\psi_s=\epsilon_1+\cdots+\epsilon_s$, $s\leq m$ in the notion of Lemma \ref{advinner}. Then 
\begin{align*}
\alpha&=a\phi_n+(p-n)\psi_a\\
\beta&=b\phi_n+(q-n)\psi_b\\
\alpha\cup \beta&= a\phi_n+(p-n)\psi_a+q\psi_b
\end{align*}
\begin{align*}
&\kappa_{\alpha\cup\beta}-\kappa_{\alpha}-\kappa_{\beta}\\
&=\langle a\phi_n,a\phi_n+2\rho\rangle + \langle (p-n)\psi_a+q\psi_b, (p-n)\psi_a+q\psi_b +2\rho \rangle\\
&- \langle a\phi_n , a\phi_n +2\rho \rangle - \langle (p-n)\psi_a, (p-n)\psi_a+2\rho \rangle\\
&- \langle b\phi_n , b\phi_n +2\rho \rangle - \langle (q-n)\psi_b, (q-n)\psi_b+2\rho \rangle\\
&=\langle (p-n)\psi_a, (p-n)\psi_a+2\rho \rangle + \langle q\psi_b, q\psi_b+2\rho \rangle +2\langle  (p-n)\psi_a, q\psi_b \rangle\\
&- \langle (p-n)\psi_a, (p-n)\psi_a+2\rho \rangle\\
&- \langle b\phi_n , b\phi_n +2\rho \rangle - \langle (q-n)\psi_b, (q-n)\psi_b+2\rho \rangle\\
&= \langle q\psi_b, q\psi_b+2\rho \rangle -2(p-n)qb \\
&- \langle b\phi_n , b\phi_n +2\rho \rangle - \langle (q-n)\psi_b, (q-n)\psi_b+2\rho \rangle\\
&= qb(b-n-m-q) -2(p-n)qb - bn(-n+n-m+b)\\
&-(q-n)b(b-n-m-(q-n))\\
&=-2bpq
\end{align*}
Hence $z_0$ acts via 
\begin{align*}
-bpq + \sum_{b\in \mathfrak{C}}(2c(b)-(a-p+b-q))
\end{align*}
Due to Lemma \ref{tomato}, this is equal to 
\begin{align*}
abq + \sum_{b\in \mathfrak{B}}(2c(b)-(a-p+b-q))
\end{align*}
where $\mathfrak{B}$ is the set of boxes in rows $p+1$ and below in $T^{(0)}$.\\

2) If $p\leq n<p+q$, it is still required that $m\geq a$. $(a^p)\cup(a^q)$ is still a hook shape and is contained in $\mathcal{P}_0$. Similar to an argument above, $z_0$ acts on $v_T$ via a scalar equal to
\begin{align*}
\frac{1}{2}(\kappa_{\alpha\cup\beta}-\kappa_{\alpha}-\kappa_{\beta})+\sum_{b\in \mathfrak{C}}(2c(b)-(a-p+b-q))
\end{align*}
In this case,
\begin{align*}
\alpha&=a\phi_n+(p-n)\psi_a\\
\beta&=b\phi_q\\
\alpha\cup\beta&=a\phi_n+(p-n)\psi_a+q\psi_b
\end{align*}
\begin{align*}
&\kappa_{\alpha\cup\beta}-\kappa_{\alpha}-\kappa_{\beta}\\
&=\langle a\phi_n, a\phi_n+2\rho \rangle + \langle (p-n)\psi_a, (p-n)\psi_a + 2\rho \rangle \\
&+ \langle q\psi_b, q\psi_b+2\rho \rangle + 2 \langle (p-n)\psi_a, q\psi_b \rangle\\
&- \langle a\phi_n, a\phi_n+2\rho \rangle -\langle (p-n)\psi_a, (p-n)\psi_a+2\rho \rangle\\
&- \langle b\phi_q,b\phi_q+2\rho \rangle\\
&= \langle q\psi_b, q\psi_b+2\rho \rangle -2 (p-n)qb - \langle b\phi_q,b\phi_q+2\rho \rangle\\
&= qb(b-n-m-q)-2bq(p-n)-bq(-q+n-m+b)\\
&=-2bpq
\end{align*}
The rest of the argument is the same to that in case 1).\\

3) If $n\geq p+q$. In this case, all diagrams $\lambda$ with Littlewood-Richardson coefficient $c^{\lambda}_{(a^p),(b^q)}$ are hook shapes and belong to $\mathcal{P}_0$. In particular, the partition $(a^p)+(b^q)$ that is the result of putting the two rectangles side by side horizontally, is in $\mathcal{P}_0$. Similar to case 1), the diagram $T^{(0)}$ can be viewed as the result of successively moving boxes in $(a^p)+(b^q)$ to the rows $p+1$ and below. Let $\lambda$ be the weight associated to $T^{(0)}$ and $\alpha+\beta$ the weight associated to $(a^p)+(b^q)$. By taking a sequence of the intermediate diagrams and apply Lemma \ref{transfer},
\begin{align*}
\kappa_{\lambda}-\kappa_{\alpha+\beta}=\displaystyle\sum_{b\in\mathfrak{B}}(4c(b)-2(a-p+b-q))
\end{align*}
The eigenvalue of $z_0$ is 
\begin{align*}
&\frac{1}{2}(\kappa_{\lambda}-\kappa_{\alpha}-\kappa_{\beta})\\
&=\frac{1}{2}(\kappa_{\alpha+\beta}-\kappa_{\alpha}-\kappa_{\beta})+\displaystyle\sum_{b\in\mathfrak{B}}(2c(b)-(a-p+b-q))
\end{align*}
Since $\alpha=a\phi_p$, $\beta=b\phi_q$, $\alpha+\beta=a\phi_p+b\phi_q$,
\begin{align*}
\kappa_{\alpha+\beta}-\kappa_{\alpha}-\kappa_{\beta}=2\langle a\phi_p, b\phi_q\rangle =2abq
\end{align*}
and the conclusion follows.
\end{proof}

\subsection{Restriction as $\mathcal{H}^{\operatorname{ext}}_{d}$-modules}

Define new elements $w_i=z_i-\frac{1}{2}(a-p+b-q)$, $1\leq i \leq d$, $w_0=z_0$. In \cite[Theorem 4.3]{D}, Daugherty introduced a second presentation of $\mathcal{H}^{\operatorname{ext}}_d$, using generators $x_1$, $w_0$, \dots, $w_d$, $t_1$, \dots, $t_{d-1}$. In particular, the newly defined elements act by $w_i.v_{T}=c_T(i)$, where

\begin{align*}
c_T(0)&=
qab+\displaystyle\sum_{b\in\mathfrak{B}}2(c(b)-\frac{1}{2}(a-p+b-q)))\\
c_T(i)&=c(T^{(i)}/T^{(i-1)})-\frac{1}{2}(a-p+b-q), \hspace{.5 in} {1\leq i\leq d}
\end{align*}

As explained in \cite[Lemma 4.14]{D}, $c(\lambda/\mu)\neq c(\lambda'/\mu)$ for any two partitions $\lambda\neq \lambda'$ that both contain $\mu$. In particular, the sequence of integers $c_T(0),\dots,c_T(d)$ uniquely determines the path $T$.

Using the new presentation, Daugherty constructed seminormal represesntations as follows. First define the action of $s_i (0\leq d)$ on the set of paths $\Gamma^{\lambda}$ to be 
\begin{align*}
s_i.T=\begin{cases}
\text{the unique other path }T' \text{ such that} (T')^{(j)}=T^{(j)}, \forall j\neq i, \text{ if }T' \text{ exists}\\
T \text{ otherwise}
\end{cases}
\end{align*}
By the discussion in \cite[Section~4.3]{D}, this action is well-defined. 

\begin{theorem}(\cite[Theorem~4.15]{D})
Let $D^{\lambda}$ be the vector space spanned by $\{v_T\}_{T\in \Gamma^{\lambda}}$. Let the generators act on $D^{\lambda}$ as follows.
\begin{align}
x_1.v_T&=[x_1]_{T,T}v_T+[x_1]_{T,s_0.T}v_{s_0.T} \notag \\
s_i.v_T&=[s_i]_{T,T}v_T+[s_i]_{T,s_i.T}v_{s_i.T},\hspace{.2 in}1\leq i\leq d \notag \\
w_i.v_T&=c_T(i)v_T \hspace{.2 in} 0\leq i\leq d. \label{wiacts}
\end{align}
Here, $[x_1]_{T,T}$, $[x_1]_{T,s_0.T}$, $[s_i]_{T,T}$, $[s_i]_{T,s_i.T}$ are constants for all $T\in \Gamma^{\lambda}$, with $[x_1]_{T,s_0.T}=0$ if $T=s_0.T$, $[s_i]_{T,s_i.T}=0$ if $T=s_i.T$, and they satisfy the list of conditions in  \cite[Theorem~4.15]{D}. Then this is a well-defined $\mathcal{H}^{\operatorname{ext}}_d$-module structure on $D^{\lambda}$.
\end{theorem}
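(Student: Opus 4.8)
The statement is a restatement of \cite[Theorem~4.15]{D}, and the plan is to argue that Daugherty's result applies here word for word. The essential observation is that the algebra $\mathcal{H}^{\operatorname{ext}}_d$ introduced in Section~3 is \emph{the same} algebra as the one in \cite{D}: it is the quotient of the degenerate two-boundary braid algebra $\mathcal{G}_d$ by a set of relations that depends only on the combinatorial parameters $a,b,p,q$ and on $d$, and not on $n$ or $m$. Hence every purely combinatorial/algebraic statement about $\mathcal{H}^{\operatorname{ext}}_d$-modules established in \cite{D} is available here; in particular, so is the verification that the relations of $\mathcal{H}^{\operatorname{ext}}_d$ hold on $D^{\lambda}$ for constants satisfying the conditions of \cite[Theorem~4.15]{D}.

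First I would check that all the combinatorial inputs used in \cite{D} are present in our setting. These are: the content function $c_T(i)$ for $0\le i\le d$, which by Theorem~\ref{z0} (for $1\le i\le d$) and by the theorem giving the action of $z_0$ (for $i=0$) is recorded exactly as in \cite{D}; the identity of Lemma~\ref{P0comb} (which is \cite[Lemma~4.13]{D}) governing the contents of boxes distinguishing two partitions in $\mathcal{P}_0$; the fact recorded in \cite[Lemma~4.14]{D} that the tuple $\bigl(c_T(0),\dots,c_T(d)\bigr)$ determines the path $T$ uniquely; and the local structure of the Bratteli graph $\Gamma$, namely that for each path $T$ and each index $i$ there is at most one other path $s_i.T$ agreeing with $T$ away from level $i$. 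All of these hold verbatim: the only difference from the $\mathfrak{gl}_n$ picture is that the vertex set of $\Gamma$ is restricted to hook-shaped partitions, and this restriction is compatible with each of the properties just listed, since it simply removes whole vertices together with the paths through them.

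Granting this, the proof is Daugherty's: one takes the constants $[x_1]_{T,T}$, $[x_1]_{T,s_0.T}$, $[s_i]_{T,T}$, $[s_i]_{T,s_i.T}$ given by the closed formulas of \cite[Theorem~4.15]{D} in terms of the $c_T(i)$, and verifies the defining relations of $\mathcal{H}^{\operatorname{ext}}_d$ on the basis $\{v_T\}_{T\in\Gamma^{\lambda}}$. The relations involving only the $w_i$, as well as $w_it_j=t_jw_i$ for $j\ne i,i+1$ and the quadratic relation $(x_1-a)(x_1+p)=0$, are immediate from the diagonal action of the $w_i$ together with the two-dimensional blocks through which $x_1$ and each $s_i$ act; the remaining relations ($t_i^2=1$, $t_it_{i+1}t_i=t_{i+1}t_it_{i+1}$, $x_{i+1}=t_ix_it_i+t_i$, the compatibility of $x_1$ with $s_1,\dots,s_{d-1}$, and the cross relations between $t_i$ and $w_j$) reduce block by block to algebraic identities among the $c_T(i)$ and the off-diagonal constants, and those identities are exactly the list of conditions the constants are assumed to satisfy.

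The one point where I would be careful --- and the only place I expect any obstacle --- is making sure that cutting $\Gamma$ down to hook partitions cannot produce a configuration in which some relation forces a nonzero off-diagonal entry $[s_i]_{T,s_i.T}$ while $s_i.T$ fails to exist in $\Gamma$. This does not happen, because in \cite{D} the off-diagonal constant is already prescribed to vanish precisely when $s_i.T=T$, and each braid/quadratic relation is checked inside the span of the (at most two) relevant basis vectors; deleting vertices of the full Bratteli graph therefore only deletes entire two-dimensional blocks and never breaks one. Consequently Daugherty's verification applies unchanged and $D^{\lambda}$ is a well-defined $\mathcal{H}^{\operatorname{ext}}_d$-module.
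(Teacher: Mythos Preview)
Your proposal is correct and matches the paper's approach: the paper does not give an independent proof of this statement but simply cites \cite[Theorem~4.15]{D}, relying (as you do) on the fact that $\mathcal{H}^{\operatorname{ext}}_d$ and the seminormal construction depend only on $a,b,p,q,d$ and not on $n,m$. Your extra paragraph checking that the hook restriction never breaks a two-dimensional block is a useful point the paper leaves implicit; it follows from the observation that any subdiagram of a hook partition is again a hook partition, so if $T^{(i+1)}$ (respectively $T^{(1)}$) lies in the $(n,m)$-hook then so does every candidate for $(s_i.T)^{(i)}$ (respectively $(s_0.T)^{(0)}$).
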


\begin{remark}
The proof of the above theorem in \cite{D} in fact shows that, based on the relations in $\mathcal{H}^{\operatorname{ext}}_d$, Equation~\ref{wiacts} determines the action of $x_1$ and $s_i$, such that they have to obey the construction up the choice of constants $[x_1]_{T,T}$, $[x_1]_{T,s_0.T}$, $[s_i]_{T,T}$, $[s_i]_{T,s_i.T}$ satisfying the mentioned conditions.
\end{remark}


The image of $\mathcal{H}^{\operatorname{ext}}_{d}$ under $\Psi$ is a large subalgebra of $\mathcal{H}_d$ under the following sense.

\begin{theorem}
For every $\lambda\in \mathcal{P}_d$, the module $\operatorname{Res}^{\mathcal{H}_d}_{\mathcal{H}^{\operatorname{ext}}_d}\mathcal{L}^{\lambda}$ is irreducible.
\end{theorem}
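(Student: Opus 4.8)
The plan is to show that the restriction $\operatorname{Res}^{\mathcal{H}_d}_{\mathcal{H}^{\operatorname{ext}}_d}\mathcal{L}^\lambda$ coincides, as an $\mathcal{H}^{\operatorname{ext}}_d$-module, with the seminormal module $D^\lambda$ constructed in \cite[Theorem~4.15]{D}, and then invoke the irreducibility of $D^\lambda$ established there. First I would record that, by the double centralizer theorem together with Theorem~\ref{z0} and the computation of the action of $z_0$, the basis $\{v_T\}_{T\in\Gamma^\lambda}$ of $\mathcal{L}^\lambda$ realizes the generators $w_0,\dots,w_d$ (equivalently $z_0,\dots,z_d$) as simultaneously diagonalizable operators with eigenvalues exactly $c_T(0),\dots,c_T(d)$, matching Equation~\eqref{wiacts}. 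Since the scalars $c_T(0),\dots,c_T(d)$ determine the path $T$ uniquely (the remark following \cite[Lemma~4.14]{D}), the $w_i$-weight spaces in $\mathcal{L}^\lambda$ are one-dimensional and indexed precisely by $\Gamma^\lambda$.

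Next I would argue that this forces the remaining generators $x_1$ and $s_1,\dots,s_{d-1}$ of $\mathcal{H}^{\operatorname{ext}}_d$ to act on $\mathcal{L}^\lambda$ by the same formulas as on $D^\lambda$. The key point, made in the remark after \cite[Theorem~4.15]{D}, is that the relations of $\mathcal{H}^{\operatorname{ext}}_d$ (in the $w_i$-presentation) together with the diagonal action of the $w_i$ on a basis of simultaneous eigenvectors already pin down $x_1$ and the $s_i$ up to the choice of the structure constants $[x_1]_{T,T},[x_1]_{T,s_0.T},[s_i]_{T,T},[s_i]_{T,s_i.T}$, and these constants are themselves determined by the consistency conditions in that theorem. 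Concretely, commuting $s_i$ past $w_i$ and $w_{i+1}$ shows $s_i$ maps the $T$-eigenline into the span of the $T$- and $(s_i.T)$-eigenlines, and similarly $x_1$ preserves the span of the $T$- and $(s_0.T)$-eigenlines; the quadratic relation $(x_1-a)(x_1+p)=0$, the braid relations, and the relations $x_{i+1}=s_ix_is_i+s_i$ then cut the free parameters down to the unique seminormal solution. Hence $\mathcal{L}^\lambda|_{\mathcal{H}^{\operatorname{ext}}_d}\cong D^\lambda$.

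Finally, since \cite[Theorem~4.15]{D} (or its accompanying irreducibility statement in \cite{D}) shows each $D^\lambda$ is an irreducible $\mathcal{H}^{\operatorname{ext}}_d$-module, the isomorphism above gives irreducibility of $\operatorname{Res}^{\mathcal{H}_d}_{\mathcal{H}^{\operatorname{ext}}_d}\mathcal{L}^\lambda$. I expect the main obstacle to be the middle step: carefully justifying that the eigenvalue data $c_T(\bullet)$ plus the $\mathcal{H}^{\operatorname{ext}}_d$-relations really do rigidify the off-diagonal action, i.e. that no genuinely new family of structure constants can arise in our setting that was excluded in \cite{D}. This amounts to checking that the separating property of the contents in the Bratteli graph $\Gamma$ attached to $\mathfrak{gl}(n|m)$ is the same combinatorial fact used in the $\mathfrak{gl}_n$ case — which follows because, by Remmel's theorem and Lemma~\ref{pierri}, the relevant portion of $\Gamma$ and the contents of added boxes behave exactly as for ordinary partitions obtained by the map $\lambda\mapsto\overline\lambda$ — so that Daugherty's argument transfers verbatim. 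Once that identification of Bratteli graphs is in place, the remainder is a direct appeal to the cited results.
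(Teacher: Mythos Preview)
Your proposal is correct and follows essentially the same route as the paper: both arguments use the diagonal action of the $w_i$ on the basis $\{v_T\}$ together with the commutation relations ($x_1$ with $w_i$ for $i\geq 2$, and $s_i$ with $w_j$ for $j\neq i,i+1$) to confine $x_1$ and each $s_i$ to the at-most-two-dimensional spans $\mathbb{C}v_T+\mathbb{C}v_{s_0.T}$ and $\mathbb{C}v_T+\mathbb{C}v_{s_i.T}$, and then observe that the resulting structure constants must satisfy the conditions of \cite[Theorem~4.15]{D}, whence irreducibility. The only cosmetic difference is that you phrase the conclusion as an isomorphism $\mathcal{L}^\lambda\cong D^\lambda$, whereas the paper simply notes that the constants land in the family of \cite[Theorem~4.15]{D} and invokes irreducibility directly; your caveat about the off-diagonal constants being ``determined'' is slightly overstated (different admissible choices give isomorphic, not identical, modules), but this does not affect the argument.
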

\begin{proof}
The action of $w_i$ already agrees with that in \cite[Theorem~4.15]{D}, it remains to show that the rest of the generators also act according to the same construction. In particular, since $x_1w_i=w_ix_1$ for all $2\leq i\leq d$, $x_1$ preserves all eigenspaces of $w_i(i\geq 2)$. When $\lambda=T^{(d)}$ is given, the eigenvalues of $w_d$, or equivalently the contents $c(T^{(d)}/\mu)$ are all distints for any diagram $\mu$ of rank $d-1$ that has an edge to $T^{(d)}$, therefore the content $c_T(d)$ completely determines the diagram $T^{(d-1)}$ in the path $T$. Similarly, the sequence of integers $c_T(2),\dots,c_T(d)$ completely determines the diagrams $T^{(1)},\dots,T^{(d)}=\lambda$ and the path from  $T^{(1)}$ to $\lambda$. According to \cite[Example~2.7]{D}, \cite[Lemma~3.3]{Stan} or \cite[Theorem~2.4]{Ok}, there are at most two diagrams $\gamma \in \mathcal{P}_0$ that have an edge to $T^{(1)}$. Denote the two resulted paths as $T$ and $s_0.T$, it follows that
\begin{align*}
x_1.v_T=[x_1]_{T,T}v_T+[x_1]_{T,s_0.T}v_{s_0.T}
\end{align*}
which agrees with the construction in \cite[Theorem~4.15]{D}.

Similarly, the action of $s_i$ satisfies $s_iw_j=w_js_i$, $\forall j\neq i,i+1$. Therefore $s_i$ fixes a subspace spanned by all paths with diagrams $T^{(0)}$, \dots, $T^{(i)}$ in the beginning and $T^{(i+2)}$, \dots, $T^{(d)}=\lambda$ in the end, where the diagrams are determined by the content $c_T(0)$, \dots $c_T(i-1)$, $c_T(i)$, \dots, $c_T(d)$ as before, with at most twopartitions for the choice of $T^{(i)}$ in the middle. Denote the resulted two paths by $T$ and $s_i.T$, we have 
\begin{align*}
s_i.v_T=[s_i]_{T,T}v_T+[s_i]_{T,s_0.T}v_{s_0.T}
\end{align*}
which also agrees with the construction in \cite[Theorem~4.15]{D}. Since the action is well-defined, these constants automatically satisfy the relations specified in the theorem, therefore defines an irreducible representation.

\end{proof}

\makeatletter
\renewcommand*{\@biblabel}[1]{\hfill#1.}
\makeatother

\end{document}